\definecolor{yellow1}{rgb}{1,0.8,0.2}
 \newtheorem{thm}{Theorem}
 \newtheorem{lem}{Lemma}
 \newtheorem{defn}{Definition}
 \newtheorem{exm}{Example}
 \newtheorem{ass}{Assumption}
\newcommand{\normm}[1]{{\left\vert\kern-0.25ex\left\vert\kern-0.25ex\left\vert #1
		\right\vert\kern-0.25ex\right\vert\kern-0.25ex\right\vert}}
\begin{document}

\title{Adaptive Stochastic Gradient Descent Ascent Algorithm for Nonconvex Minimax Problems with Decision-Dependent Distributions}

\author{Yan~Gao and Yongchao~Liu\thanks{School of Mathematical Sciences, Dalian University of Technology, Dalian 116024, China, e-mail: gydllg123@mail.dlut.edu.cn (Yan~Gao), lyc@dlut.edu.cn (Yongchao~Liu) }}
\date{}
\maketitle
\noindent{\bf Abstract.} 
In this paper, we study stochastic minimax problems with decision-dependent distributions (SMDD), where the probability distribution of stochastic variable depends on decision variable.  
For SMDD with nonconvex-(strongly) concave  objective function, we propose an adaptive stochastic gradient descent ascent algorithm (ASGDA) to find the stationary points of SMDD, which learns the unknown distribution map dynamically and optimizes the minimax problem simultaneously. When the distribution map follows a location-scale model, we show that ASGDA finds an $\epsilon$-stationary point within $\mathcal{O}\left(\epsilon^{-\left(4+\delta\right)} \right)$ for $\forall\delta>0$, and $\mathcal{O}(\epsilon^{-8})$ stochastic gradient evaluations in nonconvex-strongly concave and nonconvex-concave settings respectively. When the  objective function of SMDD is nonconvex in $x$ and satisfies Polyak-{\L}ojasiewicz (P{\L}) inequality in $y$, we propose an alternating adaptive  stochastic gradient descent ascent algorithm (AASGDA) and show that AASGDA finds an $\epsilon$-stationary point within $\mathcal{O}(\kappa_y^4\epsilon^{-4})$ stochastic gradient evaluations, where $\kappa_y$ denotes the condition number. 
We verify the effectiveness of the proposed algorithms through numerical experiments on both synthetic and real-world data.

\noindent\textbf{Key words.} stochastic minimax problems with decision-dependent distributions, nonconvex-(strongly) concave minimax problems, nonconvex-P{\L} minimax problems, stochastic gradient descent ascent, distribution map learning

\section{Introduction}\label{Section_1}
Stochastic optimization problems with decision-dependent distributions (SODD) \cite{Cutler2023Stochastic, dupacova2006optimization, Goel2006, Hellemo2018decision, Jonsbraten1998, jonsbraaten1998class} may characterize the
phenomenon where the probability distribution of uncertainty depends on or shifts in reaction to decision variables,
which have many applications in the real world, such as, pricing problems \cite{cheung2017dynamic,cooper2006models}, route planning problems \cite{liebig2017dynamic, perdomo2020performative} and loan approval problems \cite{inga2022credit,robinson2024loan}. When the decision variable in a stochastic minimax problem affects the probability distribution, the corresponding stochastic minimax problems with decision-dependent distributions (SMDD) \cite{wood2023stochastic} can be formulated as follows:
\begin{equation}\label{eq:f_minimax_D}
	\min_{x\in\mathcal{X}}\max_{y\in \mathcal{Y}}\mathcal{L}(x,y):=\underset{z\sim \mathcal{D}(x, y)}{\mathbb{E}}\left[l(x,y,z)\right],
\end{equation}
where $z$ is a random variable supported on $\mathcal{Z}\subset\mathbb{R}^{d}$, $\mathcal{X}\subset\mathbb{R}^{n}$, $\mathcal{Y}\subset\mathbb{R}^{m}$,  $l(\cdot):\mathbb{R}^{n}\times\mathbb{R}^{m}\times\mathbb{R}^{d}\rightarrow\mathbb{R}$ is a smooth function, $\mathcal{D}(\cdot):\mathbb{R}^{n}\times\mathbb{R}^{m}\rightarrow\mathcal{P}(\mathbb{R}^{d})$ is a distribution map
and $\mathbb{E}\left[\,\cdot\,\right]$ denotes the expectation with respect to the distribution $\mathcal{D}(x,y)$. 
Some important applications can be cast in this form, as illustrated in the examples below.

\begin{exm}[\textbf{Distributionally robust strategic classification \cite{perdomo2020performative}, SMDD-I}]\label{example_1}
{\em Consider a bank that uses machine learning to classify the creditworthiness of loan applicants in a loan approval task, where the applicants may react to the bank’s classifier by manipulating their features to increase the likelihood of receiving a favorable classification. 
To prevent loan default risk, the bank considers the decision-dependent nature of the data distribution and integrates a distributionally robust framework to formulate a classification model.
Specifically, given the dataset $\{(a_i,b_i)\}_{i=1}^{N}$ is strategically manipulated based on the true information of the applicants, i.e., $\{(a_i^0,b_i^0)\}_{i=1}^{N}$, by $a_i=a_i^0+\psi(x)$, $b_i=b_i^0$, where $\psi(\cdot)$ is a response function to the classifier $x$, $a_i\in\mathbb{R}^{n}$ represents the features of the $i$-th sample corresponding to historical information about an individual, such as, their
monthly income and number of credit lines, and $b_{i} \in\{-1,1\}$ is the corresponding label, then the bank formulates the model as follows
\begin{equation}\label{euq:experiment_real_example}
	\min _{x \in \mathbb{R}^{n}} \max _{y \in Y \subset \mathbb{R}^{N}}\mathcal{L}(x, y):=\frac{1}{N} \sum_{i=1}^{N} y_{i}\ell(x;a_{i}^{0}+\psi(x),b_{i})+f(x)-g(y),
\end{equation}
where $x$ represents the classifier, $\ell(\cdot)$ is the logistic loss function over the sample $(a_i,b_i)$, $f(\cdot)$ is a regularizer, $g(\cdot)$ is a distributionally robust regularizer,
$Y \triangleq\left\{y \in \mathbb{R}_{+}^{N}: \mathbf{1}^{\top} y=1\right\}$ is a simplex, $\mathcal{L}(\cdot)$ is the average loss function over the entire dataset.
Given $(a_i,b_i)$ is strategically manipulated in response to the decision variable $x$, the probability distribution of the dataset $\{(a_i,b_i)\}_{i=1}^{N}$ depends on $x$, problem \eqref{euq:experiment_real_example} is a stochastic minimax problem with decision-dependent distribution.
}  
\end{exm}
\begin{exm}[\textbf{Election prediction \cite{narang2023multiplayer, perdomo2020performative}, SMDD-II}]
	\label{example_2}
{\em
Suppose that there are two election prediction platforms,
which aim to predict the final vote margin in an election contest.
People may use the two platforms as sources of information and may be influenced by the predictions. Let $x$ and $y$ denote the parameters of the prediction models corresponding to the two platforms and
$\theta$ represent the feature corresponding to an individual, which includes age, gender, past polling averages, etc. 
Suppose the election outcome $z$ corresponding to $\theta$ follows the probability distribution that depends on $x$ and $y$, where $\theta$ follows a fixed distribution $\mathcal{P}_{\theta}$. Suppose also that the platforms observe the samples of election outcome $z$ drawn from the decision-dependent distribution. Then the optimal parameter of the prediction model of platform one relative to platform two can be characterized by the stationary point of the following stochastic minimax problem with decision-dependent distribution
\begin{equation}\label{eq_subsection1_1_2}
	\min_{x\in\mathbb{R}^{n}}\max_{y\in\mathbb{R}^{n}}\mathcal{L}(x,y):=\underset{(\theta,z)\sim\mathcal{D}(x, y) }{\mathbb{E}}\frac{1}{2}\left[\|z_{1}-\theta^{\top}x\|^{2}-\|z_{2}-\theta^{\top}y\|^{2}+\gamma_{1}\|x\|^{2}-\gamma_{2}\|y\|^{2}\right],
\end{equation}
where $\gamma_1, \gamma_2>0$ denote the regularization parameters, 
$\theta\sim\mathcal{P}_{\theta}$ and the distribution of $z$ depends on $(x, y, \theta)$. 
}
\end{exm}

\vspace{-0.3cm}
Stochastic minimax problem  with decision-dependent distribution is proposed by Wood and Dall’Anese \cite{wood2023stochastic}. 
The authors introduce the notion of performative equilibrium point which is the saddle point for the stochastic minimax problem whose probability distribution is induced by itself, and provide sufficient conditions that guarantee the existence and uniqueness of the  performative equilibrium point. Algorithmically, they
propose (stochastic) primal-dual algorithm for seeking the performative equilibrium point of SMDD. Under constant and dynamic stepsize policies, the authors show that the proposed algorithm achieves a linear convergence rate and an $\mathcal{O}(\frac{1}{\sqrt{t}})$ convergence rate,  respectively. Moreover, when $\mathcal{L}(\cdot)$ is strongly convex-strongly concave, a zeroth-order algorithm for finding the saddle point of SMDD is also proposed. 
Narang et al.~\cite{narang2023multiplayer} study a decision-dependent  stochastic game model and introduce the notion of performatively stable equilibrium, which is Nash equilibrium of the game whose probability distribution is induced by itself. The authors show the existence and uniqueness of the performatively stable equilibrium
when the stochastic game is strongly monotone, and propose the repeated retraining and repeated (stochastic) gradient methods for finding performatively stable equilibrium. When the decision-dependent stochastic game is strongly monotone, they propose an adaptive stochastic gradient method for finding Nash equilibrium. When the distribution map follows a location-scale model, i.e., $\mathcal{D}(x) = Ax + \xi$ with $\xi \sim \mathbb{P}$, the authors show that the proposed algorithm achieves a  convergence rate $\mathcal{O}(\frac{1}{\sqrt{t}})$.
In the setting of monotonicity of the decision-dependent stochastic games, Wood et al. \cite{wood2024solvingdecisiondependentgameslearning} propose a two-stage method for seeking Nash equilibrium of the stochastic game, which estimates the distribution map first and then solves the game by stochastic gradient descent method.

Note that the objective function of SMDD is generally nonconvex-nonconcave given the probability distribution of the random variable depends on $x$ and $y$, we view the problem as minimization of the following nonconvex primal function
\begin{equation}\label{eq:max_value}
	\Phi(x):=\max_{y\in \mathcal{Y}}\mathcal{L}(x,y).
\end{equation} 
We focus on algorithms for finding stationary points of $\Phi(x)$ rather than performative equilibrium points \cite{wood2023stochastic}.
As far as we are concerned, the contributions of this paper can be summarized as follows.
\begin{itemize}
	\item 
	we propose an adaptive stochastic gradient descent ascent algorithm (ASGDA). In each iteration, ASGDA updates the estimation of the distribution map $\mathcal{D}(\cdot)$ adaptively by using the data sampling from the true distribution and updates the decision variables with  stochastic gradient descent ascent iteration equipped with two-timescale stepsizes. 
	When the objective function is nonconvex-strongly concave and the distribution map follows a location-scale model, i.e., $\mathcal{D}(x,y) := Ax + By + \xi$ with the random variable $\xi$ following unknown distribution $\mathbb{P}$, we show that ASGDA finds an $\epsilon$-stationary point  within $\mathcal{O}\left(\epsilon^{-\left(4+\delta\right)} \right)$ stochastic gradient evaluations for $\forall\delta>0$. When the objective function is nonconvex-concave and the distribution map follows a location-scale model, we employ the gradient of the Moreau envelope of the primal function as the stability metric and show that ASGDA finds an $\epsilon$-stationary point within $\mathcal{O}(\epsilon^{-8})$ stochastic gradient evaluations. 
	\item For SMDD-II, we propose an alternating adaptive stochastic gradient descent ascent algorithm (AASGDA), which updates the minimization and maximization variables alternately. When the objective function is nonconvex in $x$ and satisfies Polyak-{\L}ojasiewicz (P{\L}) inequality in $y$ and the distribution map follows a location-scale model, we show that the stochastic gradient complexity of AASGDA is $\mathcal{O}\left(\kappa_{y}^{4}\epsilon^{-4}\right)$, where $\kappa_y$ denotes the condition number. While ASGDA and AASGDA may seem like natural extensions of SGDA, the challenge lies in handling of the iteration-varying probability distribution and the biased stochastic gradient induced by the dynamic learning process of the unknown distribution map in  convergence analysis.
\end{itemize}
 
The structure of this paper is organized as follows.
Section \ref{section2} introduces ASGDA and AASGDA algorithms and presents necessary assumptions on SMDD \eqref{eq:f_minimax_D}.
Section \ref{section3} presents the convergence analysis of ASGDA, where
Subsection \ref{subsection3_1} focuses on SMDD \eqref{eq:f_minimax_D} with nonconvex-strongly concave objective function and
Subsection \ref{subsection3_2}  focuses on  SMDD \eqref{eq:f_minimax_D} with nonconvex-concave objective function. Section \ref{subsection4} presents the convergence of AASGDA for SMDD \eqref{eq:f_minimax_D} with nonconvex-P{\L} objective function.
Section \ref{section5} verifies the effectiveness of the proposed algorithms on synthetic examples and two real-world applications introduced in Examples \ref{example_1} and \ref{example_2}. 
All the proofs of the lemmas are delegated to the Appendix.

\noindent\textbf{Notation.} $\mathbb{R}^{n}$ denotes the $n$-dimensional Euclidean space endowed with norm $\|x\|=\sqrt{\langle x,x\rangle}$. For a matrix $X\in\mathbb{R}^{n\times m}$, $\left\|X\right\|_F$ denotes the Frobenious norm. 
For a differentiable function $f(x, y)$, $\nabla_{x}f(x, y)$ and $\nabla_{y}f(x, y)$ denote the partial gradients with respect to $x$ and $y$ at $(x,y)$, respectively. $\operatorname{proj}_{\mathcal{Y}}(y)$ denotes the Euclidean projection of a point $y\in\mathbb{R}^n$ onto the set $\mathcal{Y}\subset\mathbb{R}^n$. We denote $a=\mathcal{O}(b)$ if $|a|\leq C|b|$ for some constant $C>0$ and $[M]:=\{1,2, \ldots, M\}$.

At the end of this section, we review the studies on stochastic first-order methods for smooth nonconvex minimax optimization.
For nonconvex-(strongly) concave stochastic minimax problems, a natural method is
stochastic gradient descent with max-oracle (SGDmax) \cite{jin2019minmax}, which alternates between a stochastic gradient descent step on
$x$ and an (approximate) maximization step on $y$.
SGDmax is a double-loop algorithm that finds an $\epsilon$-stationary point within $\tilde{\mathcal{O}}\left(\kappa_y^{3}\epsilon^{-4}\right)$\footnote{$\tilde{\mathcal{O}}(\cdot)$ denotes to hide both absolute constants and log factors.} and $\mathcal{O}\left(\epsilon^{-8}\right)$ stochastic gradient evaluations for nonconvex-strongly concave and nonconvex-concave stochastic minimax problems, respectively.
A number of double-loop algorithms based on either the inexact proximal point framework \cite{rafique2022weakly,zhang2022sapd+} or variance reduction techniques \cite{luo2020stochastic,xu2021enhanced} have been proposed, which achieve improved complexity guarantees.
Compared to double loop algorithms,
Lin et al. \cite{lin2020gradient} propose a single loop algorithm, stochastic gradient descent ascent algorithm (SGDA), in which the stepsize $\eta_y$ is larger than $\eta_x$ to force $y$ moves faster than $x$. 
SGDA can find an $\epsilon$-stationary point within $\mathcal{O}(\kappa_y^3 \epsilon^{-4})$ and $\mathcal{O}\left(\epsilon^{-8}\right)$ stochastic gradient evaluations for nonconvex-strongly concave and nonconvex-concave stochastic minimax problems, respectively. 
Zhang et al. \cite{zhang2020single} propose a smoothed GDA algorithm, which stabilizes GDA by introducing the smoothing scheme and achieves $\mathcal{O}(\epsilon^{-4})$ iteration complexity for nonconvex-concave minimax problems.
Huang et al. \cite{huang2022accelerated} propose a momentum acceleration version of SGDA (MSGDA), which achieves the optimal $\tilde{\mathcal{O}}(\epsilon^{-3})$ stochastic gradient complexity for nonconvex-strongly concave stochastic minimax problems.
Zhang and Xu \cite{zhang2024accelerated} extend MSGDA by introducing a regularization term to enhance the smoothness and concavity of the objective function, which achieves $\tilde{\mathcal{O}}(\epsilon^{-6.5})$ stochastic gradient complexity for nonconvex-concave stochastic minimax problems. 
Xu et al. \cite{xu2024stochastic} propose a SGDA method with backtracking~(SGDA-B), which estimates the unknown Lipschitz constant with backtracking technique. SGDA-B finds an $\epsilon$-stationary point within $\mathcal{O}\left(L \kappa^{3} \epsilon^{-4} \log (1 / p)\right)$ and $\tilde{\mathcal{O}}\left(L^{4} \epsilon^{-7} \log (1 / p)\right)$ stochastic gradient evaluations with probability at least $1 - p$ for nonconvex-strongly concave and nonconvex-concave stochastic minimax problems, respectively.
More recently, Jiang et al. \cite{jiang2025singleloopvariancereducedstochasticalgorithm} focus on finite-sum minimax problems and propose a probabilistic variance-reduced smoothed gradient descent-ascent (PVR-SGDA) algorithm, which achieves $\mathcal{O}(\epsilon^{-4})$ stochastic gradient complexity when the objective function is nonconvex-concave. 
On the other hand, Bo{\c{t}} et al. \cite{boct2023alternating} propose an alternating version of SGDA, which updates the minimization variable and maximization variable alternately and achieves the same complexity as SGDA. 

For nonconvex-P{\L} minimax optimization problems, Nouiehed et al. \cite{nouiehed2019solving} propose a multi-step GDA method, which alternates between multiple gradient ascent steps on $y$ and a gradient descent step on $x$. 
Chen et al. \cite{chen2022faster} focus on finite sum minimax optimization under nonconvex-P{\L} condition and  propose a variance reduction version of stochastic GDA along with its acceleration version.
Yang et al. \cite{yang2022faster} propose a stochastic alternating gradient descent ascent algorithm (AGDA) and a stochastic smoothed AGDA, which can find $\epsilon$-stationary points of nonconvex-P{\L} stochastic minimax problems within $\mathcal{O}(\kappa_y^4\epsilon^{-4})$ and $\mathcal{O}(\kappa_y^2\epsilon^{-4})$ stochastic gradient evaluations, respectively. Huang \cite{huang2023enhancedadaptivegradientalgorithms} proposes a momentum acceleration version of stochastic GDA, which achieves an improved $\tilde{\mathcal{O}}(\epsilon^{-3})$ stochastic gradient complexity
for finding $\epsilon$-stationary points of nonconvex-P{\L} stochastic minimax problems. 
More recently, Laguel et al. \cite{laguel2024high} establish the first high-probability complexity guarantee for the sm-AGDA algorithm under sub-Gaussian gradient noise, 
showing that sm-AGDA finds $\epsilon$-stationary points of nonconvex-P{\L} stochastic minimax problems within $\tilde{\mathcal{O}}\!\bigl(\epsilon^{-4}+\epsilon^{-2}\log(1/q)\bigr)$ stochastic gradient evaluations with probability at least $1-q$.

\vspace{-0.2cm}
\section{Preliminaries and Algorithms }\label{section2}
\vspace{-0.3cm}
We first present the ASGDA algorithm tailored for SMDD-I, which reads as follows.
\begin{algorithm}[H]
	\caption{ Adaptive Stochastic Gradient Descent Ascent  Algorithm (ASGDA)}
	\label{algorithm:ASGDA}
	\begin{algorithmic}[1]
		\STATE \textbf{Initialization:}
		$x^{0}\in\mathbb{R}^{n}, y^{0}\in\mathbb{R}^{m}$, stepsizes $\eta_x, \eta_y$, batchsize $M>0$, distribution $\mathcal{D}^{0}(\cdot)$.
		\FOR{$t=0$ \TO $T$}
		\STATE \textbf{Query the environment:} Draw a collection of i.i.d. data samples $z_i^t \sim \mathcal{D}(x^t,y^t)$, $i\in[M].$
		\STATE \textbf{Calculate} $G_{x}\left(x^t, y^t, z_i^t, \mathcal{D}^{t}(\cdot)\right),\, G_{y}\left(x^t, y^t, z_i^t, \mathcal{D}^{t}(\cdot)\right), i\in[M].$
		\STATE \textbf{Update decision variables:}
		{\small\begin{equation}\label{equ:alg_x}
					x^{t+1}=x^{t}-\eta_{x}\left[\frac{1}{M}\sum_{i=1}^{M} G_{x}\left(x^t, y^t, z_i^t, \mathcal{D}^{t}(\cdot)\right)\right],
		\end{equation}}
		{\small\begin{equation}\label{equ:alg_y}
					y^{t+1}=\operatorname{proj}_{\mathcal{Y}}\left(y^{t}+\eta_{y}\left[\frac{1}{M}\sum_{i=1}^M G_{y}\left(x^t, y^t, z_i^t, \mathcal{D}^{t}(\cdot)\right)\right]\right).
		\end{equation}}
	\vspace{-0.4cm}
		\STATE \textbf{Update distribution map:}\\ 
			{\small\begin{equation}\label{equ:alg_z}
				\mathcal{D}^{t+1}(\cdot)=\pi\left(\mathcal{D}^t(\cdot),  x^{t}, y^{t}, \left\{z_i^{t}\right\}_{i=1}^{M}\right), z_i^t \sim \mathcal{D}(x^t,y^t), i\in[M].
		\end{equation}}  
		\ENDFOR
		\STATE Randomly draw $\bar{x}$ from $\{x^t\}_{t=1}^{T+1}$ at uniform and return $\bar{x}$.
	\end{algorithmic}
\end{algorithm}

In Algorithm \ref{algorithm:ASGDA}, Step 4 computes the stochastic gradient of the objective function, where $\mathcal{D}^{t}(\cdot)$ represents an estimate of
$\mathcal{D}(\cdot)$, $\{z_i^t\}_{i=1}^{M}$ denote the independent and identically distributed (i.i.d.) samples from the true distribution $\mathcal{D}(x^t,y^t)$. Step
5 performs a standard variable updating process, which updates the decision variables simultaneously by stochastic gradient descent ascent method. Step 6 learns the unknown distribution map $\mathcal{D}(\cdot)$, which updates the current estimate of the distribution map via an online estimation oracle $\pi(\cdot)$ using data samples from the true distribution. As the algorithm proceeds, the dynamically estimated distribution map $\mathcal{D}^{t}(\cdot)$ converges to the true distribution map $\mathcal{D}(\cdot)$.

Next, we present the AASGDA algorithm tailored for SMDD-II, as detailed in Algorithm \ref{algorithm:AASGDA}.
\begin{algorithm}[H] 
	\caption{Alternating Adaptive Stochastic Gradient Descent Ascent  Algorithm (AASGDA)}
	\label{algorithm:AASGDA}
	\begin{algorithmic}[1]
		\STATE \textbf{Initialization:} $x^{0}\in\mathbb{R}^{n}, y^{0}\in\mathbb{R}^{m}$, stepsizes $\eta_x, \eta_y$, distribution $\mathcal{D}^{0}(\cdot)$.
		\FOR{$t=0$ \TO $T$}
		\STATE \textbf{Query the environment:} Draw a data sample
		$z_x^t \sim \mathcal{D}(x^t,y^t)$.
		\STATE \textbf{Update decision variable $x$:}
		{\small\begin{equation}\label{equ:PL_alg_x}
				\begin{aligned}
					&x^{t+1}=x^{t}-\eta_{x}\left[G_{x}\left(x^t, y^t, z_x^t, \mathcal{D}^{t}(\cdot)\right)\right].
				\end{aligned}
		\end{equation}}	
		\STATE \textbf{Query the environment:} Draw a data sample
		$z_y^t \sim \mathcal{D}(x^{t+1},y^t)$.
		\STATE \textbf{Update decision variable $y$:}
		{\small\begin{equation}\label{equ:PL_alg_y}
				\begin{aligned}
					y^{t+1}=y^{t}+\eta_{y}\left[G_{y}\left(x^{t+1}, y^t, z_y^t, \mathcal{D}^{t}(\cdot)\right)\right].
				\end{aligned}
		\end{equation}}
		\STATE \textbf{Update distribution map:}\\ 
		{\small\begin{equation}\label{equ:PL_alg_z}
				\mathcal{D}^{t+1}(\cdot)=\pi\left(\mathcal{D}^t(\cdot), x^{t+1}, y^{t}, z^t\right), z^t \sim \mathcal{D}(x^{t+1},y^t).
		\end{equation}} 
		\ENDFOR
		\STATE Randomly draw $\bar{x}$ from $\{x^t\}_{t=1}^{T+1}$ at uniform and return $\bar{x}$.
	\end{algorithmic}
\end{algorithm}

Different from Algorithm \ref{algorithm:ASGDA}, Algorithm \ref{algorithm:AASGDA} updates the decision variables by alternating stochastic gradient descent ascent method in Step 4 and Step 6. Note that the minimization variable $x$ and maximization variable $y$ are updated sequentially, $x^{t+1}$ in step 4 is used to update $y$ in Step 6. 

In what follows, we present some necessary assumptions on SMDD \eqref{eq:f_minimax_D}. 
\begin{ass}\label{ass-primal}
	The primal function $\Phi(x)$ is bounded below.
\end{ass}
\begin{ass}\label{ass_stronglyconcave} 
	$\mathcal{L}(\cdot)$ is $\ell$-smooth, and $\mathcal{L}(\cdot,y)$ is $L$-Lipschitz continuous for $\forall y\in\mathbb{R}^{m}$. 
\end{ass}
\begin{ass}\label{ass_bounded_moment}
	For $\forall x\in\mathbb{R}^{n}, y\in\mathbb{R}^{m}$, there exists a constant $L_1 > 0$ such that
	\begin{equation*}
		\begin{aligned}
			&\mathbb{E}_{z\sim\mathcal{D}(x,y)}\left\|\nabla_{x}l(x,y,z)\right\|\leq L_1,\\
			&\mathbb{E}_{z\sim\mathcal{D}(x,y)}\left\|\nabla_{z}l(x,y,z)\right\|\leq L_1.
		\end{aligned}
	\end{equation*}	
\end{ass}
\begin{ass}{\em\cite{narang2023multiplayer}}\label{ass:l_variance_bound}
	For $\forall x\in\mathbb{R}^{n}, y\in\mathbb{R}^{m}$, there exists a constant $\sigma > 0$ such that
	\begin{equation*}
		\begin{aligned}
			&\mathbb{E}_{z\sim\mathcal{D}(x,y)}\left[\|\nabla_{x,z}l(x,y,z) - \mathbb{E}_{z\sim\mathcal{D}(x,y)}[\nabla_{x,z}l(x,y,z)]\|^2\right] \leq \sigma^{2},\\
			&\mathbb{E}_{z\sim\mathcal{D}(x,y)}\left[\|\nabla_{y}l(x,y,z) - \mathbb{E}_{z\sim\mathcal{D}(x,y)}[\nabla_{y}l(x,y,z)]\|^2\right] \leq \sigma^{2}.
		\end{aligned}
	\end{equation*}
\end{ass}
\begin{ass}{\em\cite{narang2023multiplayer}}\label{ass-objective}
	There exists a probability measure $\mathbb{P}$ and an $L_{0}$-$Lipschitz$ continuous and $\ell_{0}$-smooth function $\psi(\cdot):\mathbb{R}^{n}\times\mathbb{R}^{m}\rightarrow \mathbb{R}^{d}$ satisfying
	$$\mathcal{D}(x,y)=\psi(x,y) + \xi, \;\;\;\;\xi \sim \mathbb{P}.$$
\end{ass}

Assumption \ref{ass-primal} guarantees the feasibility of SMDD \eqref{eq:f_minimax_D}. Assumption \ref{ass_stronglyconcave} 
is a standard condition for stochastic minimax problems
\cite{huang2023adagda,li2022tiada,lin2020gradient}, which is implied by Assumption \ref{ass-objective} and smoothness of $l(\cdot)$. 
Assumption \ref{ass_bounded_moment} requires the stochastic gradients to have uniformly bounded first moment.
Assumption \ref{ass:l_variance_bound} is the boundedness of the variance of the stochastic gradient. 
Assumption \ref{ass-objective} asserts that the distribution of $z$ follows a regression model of the decision variables, which allows us to measure the estimation error of the distribution map conveniently.

Under Assumptions \ref{ass_stronglyconcave} and \ref{ass-objective}, by the chain rule for composite function, the gradient of $\mathcal{L}(\cdot)$ at $(x, y)$
\begin{align*}
	&\nabla_{x}\mathcal{L}(x,y)=\underset{z\sim \mathcal{D}(x,y)}{\mathbb{E}}\left[\nabla_{x}l(x,y,z)+\nabla_{x}\psi(x,y)^{\top}\nabla_{z}l(x,y,z)\right], \\
	&\nabla_{y}\mathcal{L}(x,y)=\underset{z\sim\mathcal{D}(x,y)}{\mathbb{E}}\left[\nabla_{y}l(x,y,z)+\nabla_{y}\psi(x,y)^{\top}\nabla_{z}l(x,y,z)\right],
\end{align*}
and the corresponding stochastic gradient of $\mathcal{L}(\cdot)$ at $(x, y)$
\begin{equation}\label{equ:stograd_unbia_x}
	G_x(x,y,z)=\nabla_{x}l(x,y,z)+\left(\nabla_{x}\psi(x,y)\right)^{\top}\nabla_{z}l(x,y,z),
\end{equation}
\begin{equation}\label{equ:stograd_unbia_y}
	G_y(x,y,z)=\nabla_{y}l(x,y,z)+\left(\nabla_{y}\psi(x,y)\right)^{\top}\nabla_{z}l(x,y,z),
\end{equation}
where $z\sim\mathcal{D}(x,y)$.
As the function $\psi(\cdot)$ is unknown, $G_x(x,y,z)$ and $G_y(x,y,z)$ in \eqref{equ:stograd_unbia_x} and \eqref{equ:stograd_unbia_y} are not available. We approximate them by the following stochastic gradients
\begin{align}
	\label{eq:adaptive_stograd}
	&G_{x}(x^t, y^t, z^t, \mathcal{D}^{t}(\cdot))=\nabla_{x}l(x^t,y^t,z^t)+\left(\nabla_{x}\psi^t(x^t,y^t)\right)^{\top}\nabla_{z}l(x^t,y^t,z^t), \\
	&G_{y}(x^t, y^t, z^t, \mathcal{D}^{t}(\cdot))=\nabla_{y}l(x^t,y^t,z^t)+\left(\nabla_{y}\psi^t(x^t,y^t)\right)^{\top}\nabla_{z}l(x^t,y^t,z^t),\\
	&G_{x}(x^t, y^t, z_x^t, \mathcal{D}^{t}(\cdot))=\nabla_{x}l(x^t,y^t,z_x^t)+\left(\nabla_{x}\psi^t(x^t,y^t)\right)^{\top}\nabla_{z}l(x^t,y^t,z_x^t), \\
	&G_{y}(x^{t+1}, y^t, z_y^t, \mathcal{D}^{t}(\cdot))=\nabla_{y}l(x^{t+1},y^t,z_y^t)+\left(\nabla_{y}\psi^t(x^{t+1},y^t)\right)^{\top}\nabla_{z}l(x^{t+1},y^t,z_y^t),
\end{align}
where $z^t\sim\mathcal{D}(x^t,y^t)$, $z_x^t\sim\mathcal{D}(x^t,y^t)$,  $z_y^t\sim\mathcal{D}(x^{t+1},y^t)$ and $\psi^{t}(\cdot)$ is the current estimate of $\psi(\cdot)$ at the $t$-th iteration. Obviously, $G_{x}(x^t, y^t, z^t, \mathcal{D}^{t}(\cdot))$ and $G_{y}(x^t, y^t, z^t, \mathcal{D}^{t}(\cdot))$ are biased stochastic gradients of $\nabla_{x}\mathcal{L}(x^{t},y^{t})$ and $\nabla_{y}\mathcal{L}(x^{t},y^{t})$, respectively, and $G_x(x^t,y^t,z_x^t)$ and $G_y(x^{t+1},y^t,z_y^t)$ are biased stochastic gradients of $\nabla_{x}\mathcal{L}(x^{t},y^{t})$ and $\nabla_{y}\mathcal{L}(x^{t+1},y^{t})$, respectively.

At the end of this section, we provide some technical lemmas. For ease of the notation, we use the following notation throughout the paper,
\begin{equation*}
	\begin{array}{c}
		G^t_{x}(x, y, z):=G_{x}(x, y, z,  \mathcal{D}^{t}(\cdot)),\;\;\; G^t_{y}(x, y, z):=G_{y}(x, y, z,  \mathcal{D}^{t}(\cdot)),\\
		\mathbb{E}_{t}\left[\,\cdot\,\right]:=\mathbb{E}\left[\,\cdot\,| (x^t,y^t,\psi^t(\cdot))\right],\;\;\; \mathbb{E}_{\widehat{t}}\left[\,\cdot\,\right]:=\mathbb{E}\left[\,\cdot\,| (x^{t+1},y^t,\psi^t(\cdot))\right],\\
		y^{\star}(\cdot)\in\operatorname{argmax}_{y\in\mathcal{Y}}\mathcal{L}(\cdot, y).
	\end{array}
\end{equation*}

\begin{lem}\label{lem:expvar}
	Suppose that Assumptions \ref{ass_bounded_moment}--\ref{ass-objective} hold. Then
	\begin{small}
		\begin{align}
			&\left\|\mathbb{E}_{t}\left[\frac{1}{M}\sum_{i=1}^{M}G_{x}^{t}(x^{t},y^{t},z_{i}^{t})\right]-
			\nabla_{x}\mathcal{L}(x^{t},y^{t})\right\|\leq L_{1}\left\|\nabla_{x}\psi^{t}(x^t,y^t)-\nabla_{x}\psi(x^t,y^t) \right\|_{F}, \tag{a}\\
			&\mathbb{E}_{t}\left\|\frac{1}{M}\sum_{i=1}^{M}G_x^{t}(x^t,y^t,z^t_{i})\right\|^{2}\leq \left\|\mathbb{E}_{t}\left[\frac{1}{M}\sum_{i=1}^{M} G_{x}^{t}\left(x^{t}, y^{t}, z_{i}^{t}\right)\right]\right\|^{2}+\left(1+\left\|\nabla_{x}\psi^{t}(x^t,y^t)\right\|_F^{2} \right)\frac{\sigma^{2}}{M}, \tag{b}\\
			&\left\|\mathbb{E}_{t}\left[\frac{1}{M}\sum_{i=1}^{M}G_y^{t}(x^t,y^t,z^t_{i})\right]-\nabla_{y}\mathcal{L}(x^{t},y^{t})\right\|\leq L_{1}\left\|\nabla_{y}\psi^{t}(x^t,y^t)-\nabla_{y}\psi(x^t,y^t) \right\|_{F}, \tag{c}\\
			&\mathbb{E}_{t}\left\|\frac{1}{M}\sum_{i=1}^{M}G^{t}_y(x^t,y^t,z^t_{i})
			-\mathbb{E}_{t}\left[\frac{1}{M}\sum_{i=1}^{M}G_y^{t}(x^t,y^t,z^t_{i})\right] \right\|^{2}\leq  \left(1+\left\|\nabla_{y}\psi^{t}(x^t,y^t)\right\|_F^{2} \right)\frac{\sigma^{2}}{M},\tag{d}
		\end{align}
	\end{small}where $z_i^t\sim\mathcal{D}(x^t,y^t), i\in[M]$, $\mathbb{E}_{t}\left[\,\cdot\,\right]=\mathbb{E}\left[\,\cdot\,| (x^t,y^t,\psi^t(\cdot))\right]$. 
\end{lem}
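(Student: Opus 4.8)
The plan is to handle the four estimates in two groups: the bias bounds (a),(c), and the second--moment/variance bounds (b),(d). Throughout I work under the conditioning $\mathbb{E}_t[\cdot]=\mathbb{E}[\cdot\,|\,(x^t,y^t,\psi^t(\cdot))]$, so that the estimate $\psi^t$ and the base point $(x^t,y^t)$ are frozen and the only randomness is the i.i.d.\ draw $z_i^t\sim\mathcal{D}(x^t,y^t)$. In particular $\mathbb{E}_t[\frac{1}{M}\sum_{i=1}^M G_x^t(x^t,y^t,z_i^t)]=\mathbb{E}_{z\sim\mathcal{D}(x^t,y^t)}[G_x^t(x^t,y^t,z)]$, so in the bias step the batch average collapses to a single--sample expectation.

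For (a) I would subtract the exact formula for $\nabla_x\mathcal{L}(x^t,y^t)$ from this conditional mean. Since $G_x^t$ and the unbiased gradient $G_x$ differ only through $\nabla_x\psi^t$ versus $\nabla_x\psi$, everything cancels except $\mathbb{E}_z[(\nabla_x\psi^t(x^t,y^t)-\nabla_x\psi(x^t,y^t))^\top\nabla_z l(x^t,y^t,z)]$. Applying Jensen's inequality to move the norm inside, then the submultiplicative bound $\|B^\top v\|\le\|B\|_F\|v\|$ (which lets the frozen factor $\|\nabla_x\psi^t-\nabla_x\psi\|_F$ exit the expectation), and finally $\mathbb{E}_z\|\nabla_z l\|\le L_1$ from Assumption \ref{ass_bounded_moment}, gives (a) exactly. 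Estimate (c) is identical with $\nabla_y$ replacing $\nabla_x$.

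For (b) and (d) I would start from the bias--variance decomposition $\mathbb{E}_t\|S\|^2=\|\mathbb{E}_t[S]\|^2+\mathbb{E}_t\|S-\mathbb{E}_t[S]\|^2$ with $S=\frac{1}{M}\sum_{i=1}^M G_x^t(x^t,y^t,z_i^t)$; the first term is precisely the leading term of (b). Because the summands are i.i.d.\ and centered about $\mathbb{E}_t[S]$, their cross terms vanish and the variance of the average contracts by $1/M$, so it suffices to bound the single--sample variance $\mathbb{E}_t\|G_x^t(x^t,y^t,z)-\mathbb{E}_t[G_x^t]\|^2$. Writing the centered summand as $a+B^\top c$ with $a=\nabla_x l-\mathbb{E}_z[\nabla_x l]$, $c=\nabla_z l-\mathbb{E}_z[\nabla_z l]$, $B=\nabla_x\psi^t$, the elementary inequality $\|a+B^\top c\|^2\le(1+\|B\|_F^2)(\|a\|^2+\|c\|^2)$ combined with the joint variance bound $\mathbb{E}_z(\|a\|^2+\|c\|^2)\le\sigma^2$ from the first line of Assumption \ref{ass:l_variance_bound} yields (b). Estimate (d) is the variance term by itself, obtained by the same inequality with $B=\nabla_y\psi^t$ and $a=\nabla_y l-\mathbb{E}_z[\nabla_y l]$, invoking the two lines of Assumption \ref{ass:l_variance_bound} to control the fluctuations of $\nabla_y l$ and $\nabla_z l$.

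The algebra of the gradient formulas and the $1/M$ variance contraction are routine; the step demanding the most care is the single--sample variance bound, where one must correctly split the stochastic gradient into the raw--gradient fluctuation and the frozen Jacobian estimate, use the Frobenius norm so that the factor $1+\|\nabla\psi^t\|_F^2$ emerges, and invoke the right component of Assumption \ref{ass:l_variance_bound} for the $z$--gradient variance.
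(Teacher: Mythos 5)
Your proposal is correct and follows essentially the same route as the paper's proof: the bias bounds (a),(c) come from cancelling the common terms so that only $(\nabla\psi^{t}-\nabla\psi)^{\top}\nabla_{z}l$ survives, followed by Jensen, $\|B^{\top}v\|\le\|B\|_{F}\|v\|$ and Assumption \ref{ass_bounded_moment}, while (b),(d) use the bias--variance decomposition, the $1/M$ contraction from i.i.d.\ sampling, and Assumption \ref{ass:l_variance_bound} applied to the centered joint gradient. Your Cauchy--Schwarz inequality $\|a+B^{\top}c\|^{2}\le(1+\|B\|_{F}^{2})(\|a\|^{2}+\|c\|^{2})$ is a slightly cleaner rendering of the paper's block-matrix Frobenius-norm step and yields exactly the stated constant.
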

\begin{proof}
	The proof of Lemma \ref{lem:expvar} is provided in Appendix \ref{Appendix_for_Section2_1_Lem2}.
\end{proof}

\begin{lem}\label{lem:expvar_PL}
	Suppose that Assumptions \ref{ass_bounded_moment}--\ref{ass-objective} hold. Then
	\begin{align}
		&\left\| \mathbb{E}_t \left[ G_x^t(x^t, y^t, z_x^t) \right] - \nabla_x \mathcal{L}(x^t, y^t) \right\| \leq L_1 \left\| \nabla_x \psi^t(x^t, y^t) - \nabla_x \psi(x^t, y^t) \right\|_F,\tag{a}\\
		&\mathbb{E}_t \left\| G_x^t(x^t, y^t, z_x^t) \right\|^2 \leq \left\| \mathbb{E}_t \left[ G_x^t(x^t, y^t, z_x^t) \right] \right\|^2 + \left( 1 + \left\| \nabla_x \psi^t(x^t, y^t) \right\|_F^2 \right) \sigma^2,\tag{b}\\
		&\left\| \mathbb{E}_{\widehat{t}} \left[ G_y^t(x^{t+1}, y^t, z_y^t) \right] - \nabla_y \mathcal{L}(x^{t+1}, y^t) \right\| \leq L_1 \left\| \nabla_y \psi^t(x^{t+1}, y^t) - \nabla_y \psi(x^{t+1}, y^t) \right\|_F, \tag{c}\\
		&\mathbb{E}_{\widehat{t}} \left\| G_y^t(x^{t+1}, y^t, z_y^t)\right\|^{2}\leq \left\|\mathbb{E}_{\widehat{t}} \left[ G_y^t(x^{t+1}, y^t, z_y^t) \right] \right\|^2+\left( 1 + \left\| \nabla_y \psi^t(x^{t+1}, y^t) \right\|_F^2 \right) \sigma^2, \tag{d}
	\end{align}
	where $z_x^t\sim\mathcal{D}(x^t,y^t)$, $z_y^t\sim\mathcal{D}(x^{t+1},y^t)$, $\mathbb{E}_{t}\left[\,\cdot\,\right]=\mathbb{E}\left[\,\cdot\,| (x^t,y^t,\psi^t(\cdot))\right]$ and $\mathbb{E}_{\widehat{t}}[\,\cdot\,]=\mathbb{E}\left[\,\cdot\,| (x^{t+1},y^t,\psi^t(\cdot))\right]$. 
\end{lem}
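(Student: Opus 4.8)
The plan is to treat Lemma \ref{lem:expvar_PL} as the batch-size-one specialization of Lemma \ref{lem:expvar}: parts (a)--(b) concern the $x$-gradient evaluated at $(x^t,y^t)$ under $\mathbb{E}_t$, while (c)--(d) are the verbatim analogues for the $y$-gradient evaluated at $(x^{t+1},y^t)$ under $\mathbb{E}_{\widehat t}$, with the averaged variance $\sigma^2/M$ of Lemma \ref{lem:expvar} collapsing to $\sigma^2$ since $M=1$. The first observation I would record is that conditioning on $(x^t,y^t,\psi^t(\cdot))$ (resp. $(x^{t+1},y^t,\psi^t(\cdot))$) fixes every Jacobian appearing in \eqref{eq:adaptive_stograd}, so that the only remaining randomness is the single sample $z_x^t$ (resp. $z_y^t$), and all matrices $\nabla_x\psi^t$, $\nabla_x\psi$, $\nabla_y\psi^t$, $\nabla_y\psi$ may be pulled outside the conditional expectation as deterministic factors.

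For the bias bound (a), I would expand $\mathbb{E}_t[G_x^t(x^t,y^t,z_x^t)]$ using the definition \eqref{eq:adaptive_stograd} and the chain-rule expression for $\nabla_x\mathcal{L}$. Because $z_x^t\sim\mathcal D(x^t,y^t)$, the terms $\mathbb{E}_t[\nabla_x l]$ and $\mathbb{E}_t[\nabla_z l]$ coincide with those in $\nabla_x\mathcal L(x^t,y^t)$, so the entire difference collapses to $(\nabla_x\psi^t(x^t,y^t)-\nabla_x\psi(x^t,y^t))^\top\,\mathbb{E}_t[\nabla_z l(x^t,y^t,z_x^t)]$. Submultiplicativity of the operator/Frobenius norm, followed by Jensen's inequality together with Assumption \ref{ass_bounded_moment} (which gives $\|\mathbb{E}_t\nabla_z l\|\le\mathbb{E}_t\|\nabla_z l\|\le L_1$), yields (a); part (c) is identical after replacing $(x^t,y^t,\mathbb{E}_t,z_x^t)$ by $(x^{t+1},y^t,\mathbb{E}_{\widehat t},z_y^t)$ and using that $z_y^t\sim\mathcal D(x^{t+1},y^t)$ is matched to $\nabla_y\mathcal L(x^{t+1},y^t)$.

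For the second-moment bounds (b) and (d), I would start from the bias--variance identity $\mathbb{E}_t\|G_x^t\|^2=\|\mathbb{E}_t G_x^t\|^2+\mathbb{E}_t\|G_x^t-\mathbb{E}_t G_x^t\|^2$, so that only the centered term needs control. Writing $G_x^t=[\,I\mid(\nabla_x\psi^t)^\top\,]\,(\nabla_x l;\nabla_z l)$ as a fixed matrix acting on the stacked gradient $\nabla_{x,z}l$, the fluctuation equals $[\,I\mid(\nabla_x\psi^t)^\top\,]$ applied to the centered stacked gradient; bounding by the operator norm and using $\|[\,I\mid(\nabla_x\psi^t)^\top\,]\|_{\mathrm{op}}^2=1+\|\nabla_x\psi^t\|_{\mathrm{op}}^2\le 1+\|\nabla_x\psi^t\|_F^2$ produces exactly the prefactor of $\sigma^2$, while Assumption \ref{ass:l_variance_bound} controls the centered stacked gradient. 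Part (d) follows the same template with the augmented Jacobian $[\,I\mid(\nabla_y\psi^t)^\top\,]$ and the variance of the relevant centered gradient components supplied by Assumption \ref{ass:l_variance_bound}.

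The main obstacle — and the point that separates this from the unbiased SGDA analysis — is precisely the bias: since $\psi^t\neq\psi$, the estimators $G_x^t$ and $G_y^t$ are biased, and the whole content of (a)/(c) is to certify that the bias is linearly controlled by the Jacobian estimation error $\|\nabla\psi^t-\nabla\psi\|_F$, which later iterations drive to zero. The delicate step in (b)/(d) is obtaining the clean $(1+\|\nabla\psi^t\|_F^2)$ coefficient: this requires routing the estimate through the operator norm of the augmented Jacobian $[\,I\mid(\nabla\psi^t)^\top\,]$ rather than a crude triangle-inequality split of the two summands, together with $\|\nabla\psi^t\|_{\mathrm{op}}\le\|\nabla\psi^t\|_F$ and the variance control of Assumption \ref{ass:l_variance_bound}.
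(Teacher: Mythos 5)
Your proposal is correct and follows essentially the same route as the paper, which proves this lemma simply by repeating the argument of Lemma \ref{lem:expvar} with batch size $M=1$ and the evaluation points $(x^t,y^t)$, $(x^{t+1},y^t)$: the bias terms collapse to $(\nabla\psi^t-\nabla\psi)^{\top}\mathbb{E}[\nabla_z l]$ and are bounded via Assumption \ref{ass_bounded_moment}, while the variance terms are handled by viewing the centered stochastic gradient as an augmented Jacobian applied to the centered stacked gradient and invoking Assumption \ref{ass:l_variance_bound}. Your use of the operator norm of $[\,I\mid(\nabla\psi^t)^{\top}\,]$, with $1+\|\nabla\psi^t\|_{\mathrm{op}}^2\le 1+\|\nabla\psi^t\|_F^2$, is in fact the cleaner rendering of the same computation that the paper carries out with the block matrix in Appendix \ref{Appendix_for_Section2_1_Lem2}.
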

\begin{proof}
	The proof of Lemma \ref{lem:expvar_PL} is similar to Lemma \ref{lem:expvar}.
\end{proof}

\section{Convergence analysis of ASGDA}\label{section3}

In this section, we study the convergence of ASGDA, where Subsections \ref{subsection3_1} and \ref{subsection3_2} focus on SMDD with nonconvex-strongly concave objective function and SMDD with nonconvex-concave objective function, respectively.

\subsection{Nonconvex-strongly concave SMDD}\label{subsection3_1}
Throughout this subsection, we assume that the following assumption holds.
\begin{ass}{\em\cite{lin2020gradient}}\label{ass:stronglyconcave}The objective function and constraint set satisfy
	
	{\rm(a)} For $\forall x\in \mathbb{R}^n$, $\mathcal{L}(x,\cdot)$ is $\mu$-strongly concave with $\mu>0$.
	
	{\rm(b)} The constraint set $\mathcal{Y}$ is a compact convex set with diameter $D > 0$.
\end{ass}

Assumption \ref{ass:stronglyconcave}~(a) holds, if the smooth function $l(\cdot)$ is strongly concave in $y$ and the distribution map $\mathcal{D}(x,y)=\psi(x,y)+\xi$ is Lipschitz continuous and satisfies some appropriate stochastic dominance condition \cite{ wood2023stochastic,miller2021outside}.
Assumption \ref{ass:stronglyconcave} guarantees the primal function $\Phi(\cdot)$ is smooth. Therefore, we employ the norm of the gradient of $\Phi(\cdot)$ to define the stationary point of SMDD.
\begin{defn}\label{def_solution_stronglyconcave}
	A point $\hat{x}$ is an $\epsilon$-stationary point of
	SMDD \eqref{eq:f_minimax_D} if $\|\nabla \Phi(\hat{x})\| \leq \epsilon$.
\end{defn}
Next, we present two lemmas that play a crucial role in proving convergence. The first lemma characterizes the evolution of the primal function $\Phi(\cdot)$ in expectation over the iteration of $x$.
\begin{lem}\label{lem_value}
	Let $$\delta_{t}=\mathbb{E}\Vert y^{\star}(x^{t})-y^{t}\Vert^{2}.$$ Suppose that Assumptions \ref{ass-primal}--\ref{ass:stronglyconcave} hold and the stepsize $\eta_x\leq\frac{1}{16\kappa_{y}\ell}$, where $\kappa_{y}=\ell/\mu$ is the condition number and $\ell$ is the smoothness parameter of $\mathcal{L}(\cdot)$. Then	
	\begin{small}
	\begin{equation}\label{eq:stronglyconcave_descentPhi}
		\begin{aligned}
			\mathbb{E}\left[\Phi(x^{t+1})\right] &\leq \mathbb{E}\left[\Phi(x^t)\right] - \frac{1}{4}\eta_x \mathbb{E}\left[\left\|\nabla\Phi(x^t)\right\|^2\right] + \frac{5}{4}\eta_x\ell^2\delta_t \\
			&+ \left(\eta_x + 2\kappa_y\ell\eta_x^2\right)L_1^2\mathbb{E}\left[\left\|\nabla_x\psi^t(x^t,y^t) - \nabla_x\psi(x^t,y^t)\right\|_F^2\right]\\
			&+ \frac{\kappa_y\ell\eta_x^2\sigma^2}{M}\mathbb{E}\left[1 + \left(\left\|\nabla_x\psi^t(x^t,y^t)\right\|_F^2\right)\right].
		\end{aligned}
	\end{equation}
	\end{small}
\end{lem}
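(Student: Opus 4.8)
The plan is to treat this as a standard ``primal descent'' estimate for nonconvex--strongly concave minimax problems, adapted to the biased stochastic gradient produced by the learned distribution map. First I would record the two structural facts that Assumption \ref{ass:stronglyconcave} buys us: since $\mathcal{L}(x,\cdot)$ is $\mu$-strongly concave the maximizer $y^\star(x)$ is unique, Danskin's theorem gives $\nabla\Phi(x)=\nabla_x\mathcal{L}(x,y^\star(x))$, and $\Phi$ is $L_\Phi$-smooth with $L_\Phi=(1+\kappa_y)\ell\le 2\kappa_y\ell$ (using $\kappa_y\ge1$). Applying the descent inequality for the $L_\Phi$-smooth function $\Phi$ along the update $x^{t+1}=x^t-\eta_x\widehat G^t$, where $\widehat G^t:=\frac1M\sum_{i=1}^M G_x^t(x^t,y^t,z_i^t)$, and taking the conditional expectation $\mathbb{E}_t[\cdot]$ (so that $\widehat G^t$ becomes $\bar G^t:=\mathbb{E}_t[\widehat G^t]$ in the linear term) yields
\[
\mathbb{E}_t\big[\Phi(x^{t+1})\big]\le \Phi(x^t)-\eta_x\big\langle \nabla\Phi(x^t),\,\bar G^t\big\rangle+\kappa_y\ell\,\eta_x^2\,\mathbb{E}_t\big\|\widehat G^t\big\|^2 .
\]
The second moment is controlled by Lemma \ref{lem:expvar}(b), which isolates the variance term $\kappa_y\ell\,\eta_x^2(1+\|\nabla_x\psi^t\|_F^2)\sigma^2/M$ and leaves only $\|\bar G^t\|^2$ to be handled.

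For the cross term I would use $-\langle a,b\rangle\le-\tfrac12\|a\|^2+\tfrac12\|a-b\|^2$ with $a=\nabla\Phi(x^t)$, $b=\bar G^t$, and then split the deviation through $\nabla_x\mathcal{L}(x^t,y^t)$: the part $\nabla\Phi(x^t)-\nabla_x\mathcal{L}(x^t,y^t)=\nabla_x\mathcal{L}(x^t,y^\star(x^t))-\nabla_x\mathcal{L}(x^t,y^t)$ is bounded by $\ell\|y^\star(x^t)-y^t\|$ from $\ell$-smoothness, while $\nabla_x\mathcal{L}(x^t,y^t)-\bar G^t$ is exactly the bias bounded by $L_1\|\nabla_x\psi^t(x^t,y^t)-\nabla_x\psi(x^t,y^t)\|_F$ via Lemma \ref{lem:expvar}(a). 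This produces $-\tfrac{\eta_x}{2}\|\nabla\Phi(x^t)\|^2+\eta_x\ell^2\|y^\star(x^t)-y^t\|^2+\eta_x L_1^2\|\nabla_x\psi^t-\nabla_x\psi\|_F^2$. For the remaining $\|\bar G^t\|^2$ I would first peel off the bias by $\|\bar G^t\|^2\le 2\|\nabla_x\mathcal{L}(x^t,y^t)\|^2+2L_1^2\|\nabla_x\psi^t-\nabla_x\psi\|_F^2$ (this is what generates the $2\kappa_y\ell\eta_x^2$ coefficient on the estimation-error term), and then convert $\|\nabla_x\mathcal{L}(x^t,y^t)\|^2\le 2\|\nabla\Phi(x^t)\|^2+2\ell^2\|y^\star(x^t)-y^t\|^2$, again by $\ell$-smoothness.

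The final step is bookkeeping: the stepsize condition $\eta_x\le\frac{1}{16\kappa_y\ell}$ lets me fold the $O(\eta_x^2)$ terms $4\kappa_y\ell\eta_x^2\|\nabla\Phi(x^t)\|^2$ and $4\kappa_y\ell\eta_x^2\ell^2\|y^\star(x^t)-y^t\|^2$ coming from the quadratic term down to $\tfrac{\eta_x}{4}\|\nabla\Phi(x^t)\|^2$ and $\tfrac{\eta_x}{4}\ell^2\|y^\star(x^t)-y^t\|^2$, respectively. Combining with the cross-term contribution gives the net coefficients $-\tfrac14$ on $\eta_x\|\nabla\Phi(x^t)\|^2$, $\tfrac54$ on $\eta_x\ell^2\|y^\star(x^t)-y^t\|^2$, and $(\eta_x+2\kappa_y\ell\eta_x^2)$ on $L_1^2\|\nabla_x\psi^t-\nabla_x\psi\|_F^2$; taking total expectation and substituting $\delta_t=\mathbb{E}\|y^\star(x^t)-y^t\|^2$ then yields \eqref{eq:stronglyconcave_descentPhi}. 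I expect the main obstacle to be precisely this biasedness: unlike the unbiased SGDA analysis, $\bar G^t\neq\nabla_x\mathcal{L}(x^t,y^t)$, so the estimation error $\|\nabla_x\psi^t-\nabla_x\psi\|_F$ must be threaded through both the first-order cross term and the second-order term while keeping the constants sharp enough that the two-timescale condition on $\eta_x$ still leaves a strictly negative $-\tfrac14\eta_x\|\nabla\Phi(x^t)\|^2$. Retaining the $\|\nabla_x\psi^t\|_F^2$ factor in the variance term (it is bounded separately) rather than discarding it is what couples this estimate to the distribution-map learning analysis in the subsequent lemmas.
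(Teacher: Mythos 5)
Your proposal is correct and follows essentially the same route as the paper's proof in Appendix B.1: descent inequality for the $2\kappa_y\ell$-smooth primal function, splitting the bias through $\nabla_x\mathcal{L}(x^t,y^t)$ in both the cross term and the second-moment term via Lemma \ref{lem:expvar}(a)--(b), bounding $\|\nabla\Phi(x^t)-\nabla_x\mathcal{L}(x^t,y^t)\|\le\ell\|y^\star(x^t)-y^t\|$, and absorbing the $4\kappa_y\ell\eta_x^2$ contributions using $\eta_x\le\frac{1}{16\kappa_y\ell}$ to land on the coefficients $-\tfrac14$, $\tfrac54$, and $\eta_x+2\kappa_y\ell\eta_x^2$. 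The only cosmetic difference is that you handle the cross term with a single Young step $-\langle a,b\rangle\le-\tfrac12\|a\|^2+\tfrac12\|a-b\|^2$ while the paper applies Young's inequality twice with $\tfrac14\|\nabla\Phi(x^t)\|^2$ slack each time; the resulting intermediate bound is identical.
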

\begin{proof}
The proof of Lemma \ref{lem_value} is provided in Appendix \ref{Appendix_for_Section3_1_Lem1}.
\end{proof}

The second lemma establishes a connection between $\mathbb{E}\Vert y^{\star}(x^{t})-y^{t}\Vert^{2}$ and $\mathbb{E}\left\|\nabla\Phi(x^{t})\right\|^{2}$ along with the estimation error of the distribution map by a recursion inequality.
\begin{lem}\label{lemma_delta_recursion}
	Let $\delta_{t} $ be defined in Lemma \ref{lem_value}.
	Suppose that Assumptions \ref{ass-primal}--\ref{ass:stronglyconcave} hold and the stepsize $\eta_{y}=\frac{1}{2(\ell+\mu)}$, where $\ell$ is the smoothness parameter of $\mathcal{L}(\cdot)$. Then
	\begin{small}
	\begin{equation}\label{eq:f_delta}
		\begin{aligned}
			\delta_{t}
			\leq&\left(1-\frac{1}{8\kappa_{y}}+16\ell^2(\kappa_{y})^{2}(\kappa_{y}+1)\eta_{x}^{2} \right)\delta_{t-1}+16(\kappa_{y}+1)\kappa_{y}^{2}\eta_{x}^{2}\mathbb{E}\left[\Vert\nabla\Phi(x^{t-1})\Vert^2\right]\\
			&+8L_1^2(\kappa_{y}+1)\kappa_{y}^{2}\eta_{x}^{2}
			\mathbb{E}\left[\Vert\nabla_{x}\psi^{t-1}(x^{t-1},y^{t-1})-\nabla_{x}\psi(x^{t-1},y^{t-1}) \Vert^2_{F}\right]\\
			&+\left(4(\kappa_{y}+1)\kappa_{y}^{2}\eta_{x}^{2}\right)\mathbb{E}\left[1+\left\|\nabla_{x}\psi^{t-1}(x^{t-1},y^{t-1})\right\|_F^{2} \right]\frac{\sigma^{2}}{M}\\
			&+\left(\frac{1}{(\mu+\ell)^{2}}+\frac{1}{\mu \ell}\right)L_{1}^{2}\mathbb{E}\left[\left\|\nabla_{y}\psi^{t-1}(x^{t-1},y^{t-1})-\nabla_{y}\psi(x^{t-1},y^{t-1}) \right\|_{F}^{2}\right]\\
			&+\left(\frac{1}{2(\mu+\ell)^{2}}\right)\mathbb{E}\left[1+\left\|\nabla_{y}\psi^{t-1}(x^{t-1},y^{t-1})\right\|_F^{2} \right]\frac{\sigma^{2}}{M}.
		\end{aligned}
	\end{equation}
	\end{small}
\end{lem}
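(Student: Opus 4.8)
The plan is to propagate the error $\delta_t=\mathbb{E}\|y^\star(x^t)-y^t\|^2$ across one iteration by isolating two effects: the contraction produced by the ascent step \eqref{equ:alg_y} toward the \emph{fixed} target $y^\star(x^{t-1})$, and the drift of the moving target caused by the descent step \eqref{equ:alg_x}. Writing $\bar G_x^{t-1}:=\frac1M\sum_{i=1}^M G_x^{t-1}(x^{t-1},y^{t-1},z_i^{t-1})$ and $\bar G_y^{t-1}$ analogously, and inserting $y^\star(x^{t-1})$, I would first apply Young's inequality with a parameter $\beta>0$:
\begin{equation*}
\|y^\star(x^t)-y^t\|^2\le(1+\beta)\,\|y^t-y^\star(x^{t-1})\|^2+(1+\beta^{-1})\,\|y^\star(x^t)-y^\star(x^{t-1})\|^2 .
\end{equation*}
Under Assumption \ref{ass:stronglyconcave} the solution map $y^\star(\cdot)$ is $\kappa_y$-Lipschitz (a consequence of $\mu$-strong concavity together with $\ell$-smoothness), so the drift term is bounded by $\kappa_y^2\|x^t-x^{t-1}\|^2=\kappa_y^2\eta_x^2\|\bar G_x^{t-1}\|^2$. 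I would fix $\beta$ by the requirement $1+\beta^{-1}=4(\kappa_y+1)$, i.e. $\beta=\tfrac{1}{4\kappa_y+3}\le1$.

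For the contraction term I would use that $y^\star(x^{t-1})\in\mathcal{Y}$ and that $\operatorname{proj}_{\mathcal{Y}}$ is nonexpansive to drop the projection, then expand the square and take the conditional expectation $\mathbb{E}_{t-1}$. Splitting mean from noise, the noise is controlled by Lemma \ref{lem:expvar}(d), and the mean part $\|y^{t-1}-y^\star(x^{t-1})+\eta_y\,\mathbb{E}_{t-1}[\bar G_y^{t-1}]\|^2$ is treated by co-coercivity of $\nabla_y\mathcal{L}(x^{t-1},\cdot)$, valid since $\eta_y=\tfrac{1}{2(\ell+\mu)}\le\tfrac{2}{\mu+\ell}$. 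This yields the contraction factor $1-\tfrac{\kappa_y}{(\kappa_y+1)^2}$ on $\|y^{t-1}-y^\star(x^{t-1})\|^2$ together with a \emph{negative} multiple of $\|\nabla_y\mathcal{L}(x^{t-1},y^{t-1})\|^2$. The distribution-map bias, quantified by Lemma \ref{lem:expvar}(c), would be absorbed against this spare negative term via a weighted Young step; this is exactly what produces the mixed weight $\tfrac{1}{(\mu+\ell)^2}+\tfrac{1}{\mu\ell}$ on $\|\nabla_y\psi^{t-1}-\nabla_y\psi\|_F^2$.

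For the drift term I would expand $\mathbb{E}_{t-1}\|\bar G_x^{t-1}\|^2$ using Lemma \ref{lem:expvar}(a),(b): its mean-square splits into $\|\nabla_x\mathcal{L}(x^{t-1},y^{t-1})\|^2$ and the distribution-map bias, plus a $\tfrac{\sigma^2}{M}$ variance contribution. I would then convert the gradient into right-hand-side quantities via Danskin's identity $\nabla\Phi(x)=\nabla_x\mathcal{L}(x,y^\star(x))$ and $\ell$-smoothness, $\|\nabla_x\mathcal{L}(x^{t-1},y^{t-1})\|^2\le 2\|\nabla\Phi(x^{t-1})\|^2+2\ell^2\|y^{t-1}-y^\star(x^{t-1})\|^2$; the second summand feeds back into $\delta_{t-1}$ and is the source of the $16\ell^2\kappa_y^2(\kappa_y+1)\eta_x^2$ correction, while the first gives the $16(\kappa_y+1)\kappa_y^2\eta_x^2$ weight on $\mathbb{E}\|\nabla\Phi(x^{t-1})\|^2$.

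Finally I would take total expectations and collect terms. The coefficient of $\delta_{t-1}$ coming from the contraction is $(1+\beta)\bigl(1-\tfrac{\kappa_y}{(\kappa_y+1)^2}\bigr)$, and the crux of the argument is the elementary verification that, with $\beta=\tfrac{1}{4\kappa_y+3}$, this is at most $1-\tfrac{1}{8\kappa_y}$ for every $\kappa_y\ge1$ (which reduces to $20\kappa_y^2-15\kappa_y-3\ge0$); on the lower-order bias and variance terms I would use the crude bound $1+\beta\le2$ to obtain the stated clean coefficients. The main obstacle is precisely this bookkeeping: the ascent step contracts only toward the \emph{previous} target, so the target-drift Young split and the absorption of the biased stochastic gradient both erode the contraction rate, and one must check that the prescribed step size $\eta_y=\tfrac{1}{2(\ell+\mu)}$ together with the smallness budget on $\eta_x$ leaves enough slack to still recover a genuine $1-\tfrac{1}{8\kappa_y}$ factor.
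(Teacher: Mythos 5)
Your proposal follows essentially the same route as the paper: the identical Young split with $1+\beta^{-1}=4(\kappa_y+1)$ (the paper writes it as $\Delta=4\kappa_y+3$), the $\kappa_y$-Lipschitz continuity of $y^{\star}(\cdot)$ plus Lemma \ref{lem:expvar} and the bound $\|\nabla_x\mathcal{L}(x^{t-1},y^{t-1})\|^2\le 2\|\nabla\Phi(x^{t-1})\|^2+2\ell^2\delta_{t-1}$ for the drift, and a biased projected-SGD contraction estimate built on the Bertsekas co-coercivity inequality for the ascent step, which the paper packages as a standalone technical lemma (Lemma \ref{lemma_f_delta}). The only cosmetic difference is in that last step: the paper absorbs the bias cross term with a Young parameter $2\mu\ell$ and uses $\mu$-strong concavity to upgrade the contraction to $1-\tfrac{1}{2(\kappa_y+1)}$, whereas you keep the raw factor $1-\tfrac{\kappa_y}{(\kappa_y+1)^2}$ and absorb the bias against the spare negative gradient-difference term — both yield the stated coefficients.
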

\begin{proof}
The proof of Lemma \ref{lemma_delta_recursion} is provided in Appendix \ref{Appendix_for_Section3_1_Lem2}.
\end{proof}

We are ready to study the convergence of ASGDA.
\begin{thm}\label{theorem_stronglyconcave}
	Suppose that Assumptions \ref{ass-primal}--\ref{ass:stronglyconcave} hold
	and the stepsizes $\eta_{x}=\frac{1}{40(\kappa_{y}+1)^{2}\ell}$, $\eta_{y}=\frac{1}{2(\ell+\mu)}$. Then
	\begin{small}
		\begin{equation}\label{eq:theorem1_concluding}
				\begin{aligned}
				\frac{1}{T}\sum_{t=0}^{T-1} \mathbb{E}\left[\left\|\nabla \Phi\left(x^{t}\right)\right\|^{2}\right]
				\leq& \frac{320(\kappa_{y}+1)^{2}\ell \left(\Phi\left(x^{0}\right)-\mathbb{E}\left[\Phi\left(x^{T}\right)\right]\right)+100\ell^{2}\kappa_{y} D^{2}}{T}\\
				&
				+\left(101\kappa_{y} L_0^2+\frac{L_1^2}{2}+7\right)\frac{\sigma^2}{M}\\
				&+\left(10L_1^2+\frac{\sigma^2}{M}\right)\frac{\sum_{t=0}^{T-1}\mathbb{E}\left[\|\nabla_{x}\psi^{t}(x^{t},y^{t})-\nabla_{x}\psi(x^{t},y^{t})\|_F^{2}\right]}{T}  \\
				&+100\kappa_{y}\left((\kappa_{y}+1)L_1^2+\frac{\sigma^2}{M}\right)\frac{\sum_{t=0}^{T-1} \mathbb{E}\left[\left\|\nabla_{y}\psi^{t}(x^{t},y^{t})-\nabla_{y}\psi(x^{t},y^{t})\right\|_{F}^{2}\right]}{T}.
			\end{aligned}
		\end{equation}
	\end{small}
\end{thm}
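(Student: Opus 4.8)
The plan is to fuse the two one-step recursions of Lemma~\ref{lem_value} and Lemma~\ref{lemma_delta_recursion} into a single inequality for $\frac1T\sum_t\mathbb{E}\|\nabla\Phi(x^t)\|^2$, relying on the two-timescale stepsize choice to keep the coupling between $\Phi$ and $\delta_t$ under control. First I would check that the prescribed stepsizes meet the hypotheses of both lemmas: with $\eta_x=\frac{1}{40(\kappa_y+1)^2\ell}$ one has $16\kappa_y\le 40(\kappa_y+1)^2$, so $\eta_x\le\frac{1}{16\kappa_y\ell}$ as Lemma~\ref{lem_value} requires, while $\eta_y=\frac{1}{2(\ell+\mu)}$ is precisely the value assumed in Lemma~\ref{lemma_delta_recursion}.

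The decisive step is to pin down the contraction factor in the $\delta_t$-recursion. Substituting $\eta_x$ into $16\ell^2\kappa_y^2(\kappa_y+1)\eta_x^2$ gives $\frac{\kappa_y^2}{100(\kappa_y+1)^3}\le\frac{1}{100\kappa_y}$, so the multiplier of $\delta_{t-1}$ is at most $\rho:=1-\frac{1}{8\kappa_y}+\frac{1}{100\kappa_y}\le 1-\frac{1}{10\kappa_y}$, whence $\frac{1}{1-\rho}\le 10\kappa_y$. Unrolling the recursion and summing the resulting geometric series yields $\sum_{t=0}^{T-1}\delta_t\le 10\kappa_y\bigl(\delta_0+\sum_t c_t\bigr)$, where $c_t$ abbreviates the gradient term $16(\kappa_y+1)\kappa_y^2\eta_x^2\mathbb{E}\|\nabla\Phi(x^t)\|^2$ together with the estimation-error and variance contributions of Lemma~\ref{lemma_delta_recursion}. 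Since $\mathcal{Y}$ has diameter $D$ by Assumption~\ref{ass:stronglyconcave}(b), I would bound the initial term by $\delta_0=\mathbb{E}\|y^\star(x^0)-y^0\|^2\le D^2$.

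Next I would sum Lemma~\ref{lem_value} over $t=0,\dots,T-1$, telescoping the $\Phi$ terms to $\mathbb{E}[\Phi(x^T)]-\Phi(x^0)$, and insert the bound on $\sum_t\delta_t$. The key cancellation is that the gradient mass fed back through $\frac54\eta_x\ell^2\sum_t\delta_t$ carries coefficient $\frac54\eta_x\ell^2\cdot 10\kappa_y\cdot 16(\kappa_y+1)\kappa_y^2\eta_x^2=200\ell^2\kappa_y^3(\kappa_y+1)\eta_x^3$, and the choice of $\eta_x$ forces this to be at most $\frac18\eta_x$ because $\frac{\kappa_y^3}{8(\kappa_y+1)^3}\le\frac18$. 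This lets me absorb the feedback into the $-\frac14\eta_x\sum_t\mathbb{E}\|\nabla\Phi(x^t)\|^2$ term and retain a clean $-\frac18\eta_x\sum_t\mathbb{E}\|\nabla\Phi(x^t)\|^2$; bounding $\|\nabla_x\psi^t\|_F,\|\nabla_y\psi^t\|_F\le L_0$ (Assumption~\ref{ass-objective}) in the residual variance terms yields the constant $\frac{\sigma^2}{M}$ contribution, while the $\|\nabla_x\psi^t-\nabla_x\psi\|_F^2$ and $\|\nabla_y\psi^t-\nabla_y\psi\|_F^2$ terms survive as averages.

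Finally I would divide through by $\frac18\eta_x T$; since $\frac{8}{\eta_x}=320(\kappa_y+1)^2\ell$, the initialization gap becomes $\frac{320(\kappa_y+1)^2\ell(\Phi(x^0)-\mathbb{E}[\Phi(x^T)])}{T}$ and the $\delta_0\le D^2$ term becomes $\frac{12.5\,\ell^2\kappa_y D^2}{\frac18}\cdot\frac1T=\frac{100\ell^2\kappa_y D^2}{T}$, exactly as in \eqref{eq:theorem1_concluding}, with the remaining error and variance terms collected into the stated coefficients after routine bookkeeping. I expect the two-timescale accounting of the decisive step to be the main obstacle: one must verify \emph{simultaneously} that $\rho<1$ uniformly in $\kappa_y$ and that the feedback coefficient $200\ell^2\kappa_y^3(\kappa_y+1)\eta_x^3$ is dominated by $\frac18\eta_x$, and both hinge on the precise exponent $(\kappa_y+1)^2$ appearing in $\eta_x$.
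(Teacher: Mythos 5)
Your proposal follows essentially the same route as the paper's proof: unroll the $\delta_t$-recursion with contraction factor $\gamma\le 1-\tfrac{1}{10\kappa_y}$, bound the geometric sums by $10\kappa_y$, feed the result back into Lemma~\ref{lem_value}, verify that the feedback coefficient $200\ell^2\kappa_y^3(\kappa_y+1)\eta_x^3\le\tfrac{1}{8}\eta_x$, telescope, and divide by $\tfrac{1}{8}\eta_x T=\bigl(320(\kappa_y+1)^2\ell\bigr)^{-1}T$; all of your constants match the paper's. The only small imprecision is that Assumption~\ref{ass-objective} bounds $\|\nabla\psi\|_F\le L_0$ for the \emph{true} map, so the estimated $\|\nabla\psi^t\|_F^2$ must first be split via $\|\nabla\psi^t\|_F^2\le 2\|\nabla\psi\|_F^2+2\|\nabla\psi^t-\nabla\psi\|_F^2$ (as the paper does) rather than bounded directly by $L_0$.
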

\begin{proof}
	By the definitions of $D$ in Assumption \ref{ass:stronglyconcave} (b) and
	$\delta_{t}$  in Lemma \ref{lem_value}, we have $\delta_{0}<D^{2}$. Using inequality \eqref{eq:f_delta} recursively,
		\begin{equation*}\label{equ:stronglyconcave_th_1}
			\begin{aligned}
				\delta_{t} \leq & 16(\kappa_{y}+1)\kappa_{y}^{2}\eta_{x}^{2}\left(\sum_{j=0}^{t-1}\gamma^{t-1-j}\mathbb{E}\left\|\nabla\Phi\left(x^{j}\right)\right\|^{2}\right) \\
				& + 4(\kappa_{y}+1)\kappa_{y}^{2}\eta_{x}^{2}\sum_{j=0}^{t-1}\gamma^{t-1-j}\mathbb{E}\left[\frac{1}{M}\left(1+\left\|\nabla_{x}\psi^{j}(x^{j},y^{j})\right\|_{F}^{2}\right)\sigma^{2}\right] \\
				& + 8L_{1}^{2}(\kappa_{y}+1)\kappa_{y}^{2}\eta_{x}^{2}\left(\sum_{j=0}^{t-1}\gamma^{t-1-j}\mathbb{E}\left\|\nabla_{x}\psi^{j}(x^{j},y^{j})-\nabla_{x}\psi(x^{j},y^{j})\right\|_{F}^{2}\right) + \gamma^t D^{2} \\
				& + \frac{1}{2(\mu+\ell)^{2}}\sum_{j=0}^{t-1}\gamma^{t-1-j}\mathbb{E}\left[\frac{1}{M}\left(1+\left\|\nabla_{y}\psi^{j}(x^{j},y^{j})\right\|_{F}^{2}\right)\sigma^{2}\right] \\
				& + \left(\frac{1}{(\mu+\ell)^{2}}+\frac{1}{\mu\ell}\right)L_{1}^{2}\left(\sum_{j=0}^{t-1}\gamma^{t-1-j}\mathbb{E}\left\|\nabla_{y}\psi^{j}(x^{j},y^{j})-\nabla_{y}\psi(x^{j},y^{j})\right\|_{F}^{2}\right),
			\end{aligned}
		\end{equation*}where $\gamma=1-\frac{1}{8\kappa_{y}}+16(\kappa_{y}+1)\kappa_{y}^{2}\ell^{2}\eta_{x}^{2}.$
	
	Plugging the above inequality into inequality  \eqref{eq:stronglyconcave_descentPhi},
		\begin{equation*}
			\begin{aligned}
				\mathbb{E}\left[\Phi(x^{t+1})\right]&\leq \mathbb{E}\left[\Phi(x^{t})\right]-\frac{1}{4}\eta_{x}\mathbb{E}\left[\left\|\nabla\Phi(x^{t}) \right\|^{2} \right]+\frac{\kappa_{y}\ell\eta_{x}^{2}\sigma^{2}}{M}\mathbb{E}\left[1+\left(\Vert\nabla_{x}\psi^{t}(x^{t},y^{t})\Vert_F^{2}\right) \right]\\
				&+\frac{5}{4}\eta_{x}\ell^{2}\gamma^{t} D^{2}+20(\kappa_{y}+1) \kappa_{y}^{2} \eta_{x}^{3}\ell^{2}\left(\sum_{j=0}^{t-1} \gamma^{t-1-j} \mathbb{E}\left\|\nabla \Phi\left(x^{j}\right)\right\|^{2}\right)\\
				&+5 (\kappa_{y}+1)\kappa_{y}^{2} \eta_{x}^{3}\ell^{2} \sum_{j=0}^{t-1} \gamma^{t-1-j} \mathbb{E}\left[\frac{1}{M}\left(1+\left\|\nabla_{x}\psi^{j}(x^{j},y^{j})\right\|_{F}^{2}\right) \sigma^{2}\right]\\
				&+10L_1^{2}(\kappa_{y}+1) \kappa_{y}^{2} \eta_{x}^{3}\ell^{2}\left(\sum_{j=0}^{t-1} \gamma^{t-1-j}  \mathbb{E}\left\|\nabla_{x}\psi^{j}(x^{j},y^{j})-\nabla_{x}\psi(x^{j},y^{j})\right\|_{F}^{2}\right)\\
				&+\frac{5}{8}\eta_{x}\ell^{2}\frac{1}{(\mu+\ell)^2}\sum_{j=0}^{t-1} \gamma^{t-1-j} \mathbb{E}\left[\frac{1}{M}\left(1+\left\|\nabla_{y}\psi^{j}(x^{j},y^{j})\right\|_{F}^{2}\right) \sigma^{2}\right]\\
				&+\frac{5}{4}\eta_{x}\ell^{2}L_1^2\left(\frac{1}{(\mu+\ell)^2}+\frac{1}{\mu\ell}\right)\left(\sum_{j=0}^{t-1} \gamma^{t-1-j}  \mathbb{E}\left\|\nabla_{y}\psi^{j}(x^{j},y^{j})-\nabla_{y}\psi(x^{j},y^{j})\right\|_{F}^{2}\right)\\
				&+\left(\eta_{x}+2\kappa_{y}\ell\eta_{x}^{2} \right)L_{1}^{2}\mathbb{E}\left\|\nabla_{x}\psi^{t}(x^{t},y^{t})-\nabla_{x}\psi(x^{t},y^{t}) \right\|_F^{2}.
			\end{aligned}
		\end{equation*}

\noindent By the fact that 
	\begin{align*}
		\|\nabla_{x}\psi^{t}(x^{t},y^{t})\|_F^{2} \leq& 2\|\nabla_{x}\psi(x^{t},y^{t})\|_F^{2}+2\|\nabla_{x}\psi^{t}(x^{t},y^{t})-\nabla_{x}\psi(x^{t},y^{t})\|_F^{2},\\
		\|\nabla_{y}\psi^{t}(x^{t},y^{t})\|_F^{2} \leq& 2\|\nabla_{y}\psi(x^{t},y^{t})\|_F^{2}+2\|\nabla_{y}\psi^{t}(x^{t},y^{t})-\nabla_{y}\psi(x^{t},y^{t})\|_F^{2},
	\end{align*}
	the above inequality can be reformulated as
	\begin{small}
		\begin{equation*}
			\begin{aligned}
				\mathbb{E}\left[\Phi(x^{t+1})\right]&\leq \mathbb{E}\left[\Phi(x^{t})\right]-\frac{1}{4}\eta_{x}\mathbb{E}\left[\left\|\nabla\Phi(x^{t}) \right\|^{2} \right]
				+\frac{5}{4}\eta_{x}\ell^{2}\gamma^{t} D^{2}\\
				&+20(\kappa_{y}+1) \kappa_{y}^{2} \eta_{x}^{3}\ell^{2}\left(\sum_{j=0}^{t-1} \gamma^{t-1-j} \mathbb{E}\left\|\nabla \Phi\left(x^{j}\right)\right\|^{2}\right)+\frac{\kappa_{y}\ell\eta_{x}^{2}\sigma^{2}}{M}\mathbb{E}\left[1+\left(2\|\nabla_{x}\psi(x^{t},y^{t})\|_F^{2}\right) \right]\\
				&+\left(5 (\kappa_{y}+1)\kappa_{y}^{2} \eta_{x}^{3}\ell^{2}+\frac{5}{8}\eta_{x}\ell^{2}\frac{1}{(\mu+\ell)^2}\right)\frac{\sigma^{2}}{M} \sum_{j=0}^{t-1} \gamma^{t-1-j} \\
				&+10 (\kappa_{y}+1)\kappa_{y}^{2} \eta_{x}^{3}\ell^{2}\frac{\sigma^{2}}{M} \sum_{j=0}^{t-1} \gamma^{t-1-j} \mathbb{E}\left[\left(\|\nabla_{x}\psi(x^{j},y^{j})\|_F^{2}\right) \right]\\
				&+\frac{5}{4}\frac{\eta_{x}\ell^{2}}{(\mu+\ell)^2}\frac{\sigma^{2}}{M}\sum_{j=0}^{t-1} \gamma^{t-1-j} \mathbb{E}\left[\left(\|\nabla_{y}\psi(x^{j},y^{j})\|_F^{2}\right) \right]\\
				&+10 (\kappa_{y}+1)\kappa_{y}^{2} \eta_{x}^{3}\ell^{2}\left(L_1^2+\frac{\sigma^{2}}{M}\right) \sum_{j=0}^{t-1} \gamma^{t-1-j} \mathbb{E}\left[\left(\|\nabla_{x}\psi^{t+1}(x^{j},y^{j})-\nabla_{x}\psi(x^{j},y^{j})\|_F^{2}\right) \right]\\
				&+\frac{5}{4}\eta_{x}\left(\kappa_{y} L_1^2+\frac{\ell^{2}}{(\mu+\ell)^2}\left(\frac{\sigma^{2}}{M}+L_1^2\right)\right)\left(\sum_{j=0}^{t-1} \gamma^{t-1-j}  \mathbb{E}\left\|\nabla_{y}\psi^{j}(x^{j},y^{j})-\nabla_{y}\psi(x^{j},y^{j})\right\|_{F}^{2}\right)\\
				&+\left(\left(\eta_{x}+2\kappa_{y}\ell\eta_{x}^{2} \right)L_{1}^{2}+\frac{2\kappa_{y}\ell\eta_{x}^{2}\sigma^{2}}{M}\right)\mathbb{E}\left\|\nabla_{x}\psi^{t}(x^{t},y^{t})-\nabla_{x}\psi(x^{t},y^{t}) \right\|_F^{2}.
			\end{aligned}
		\end{equation*}
	\end{small}
	
	\noindent Rearranging the terms and
	summing up the above inequality over $t = 0, 1, . . . , T-1$, we arrive at
	\begin{small}
		\begin{equation}\label{eq:f_aftersum}
			\begin{aligned}
				\mathbb{E}\left[\Phi\left(x^{T}\right)\right]
				&\leq \mathbb{E}\left[\Phi\left(x^{0}\right)\right]-\frac{1}{4} \eta_{x} \sum_{t=0}^{T-1} \mathbb{E}\left[\left\|\nabla \Phi\left(x^{t}\right)\right\|^{2}\right]
				+\frac{5\eta_{x} \ell^{2} D^{2}}{4}  \sum_{t=0}^{T-1} \gamma^{t}\\
				&+20(\kappa_{y}+1) \kappa_{y}^{2} \eta_{x}^{3}\ell^{2}\left(\sum_{t=0}^{T-1}\sum_{j=0}^{t-1} \gamma^{t-1-j} \mathbb{E}\left\|\nabla \Phi\left(x^{j}\right)\right\|^{2}\right)\\
				&+\left(5 (\kappa_{y}+1)\kappa_{y}^{2} \eta_{x}^{3}\ell^{2}+\frac{5}{8}\eta_{x}\ell^{2}\frac{1}{(\mu+\ell)^2}\right)\frac{\sigma^{2}}{M} \sum_{t=0}^{T-1}\sum_{j=0}^{t-1} \gamma^{t-1-j} \\
				&+10 (\kappa_{y}+1)\kappa_{y}^{2} \eta_{x}^{3}\ell^{2}\frac{\sigma^{2}}{M} \sum_{t=0}^{T-1}\sum_{j=0}^{t-1} \gamma^{t-1-j} \mathbb{E}\left[\left(\|\nabla_{x}\psi(x^{j},y^{j})\|_F^{2}\right) \right]\\
				&+\frac{5}{4}\frac{\eta_{x}\ell^{2}}{(\mu+\ell)^2}\frac{\sigma^{2}}{M}\sum_{t=0}^{T-1}\sum_{j=0}^{t-1} \gamma^{t-1-j} \mathbb{E}\left[\left(\|\nabla_{y}\psi(x^{j},y^{j})\|_F^{2}\right) \right]+\frac{2\kappa_{y}\ell\eta_{x}^{2}\sigma^{2}}{M}\sum_{t=0}^{T-1}\mathbb{E}\left[\left(\|\nabla_{x}\psi(x^{t},y^{t})\|_F^{2}\right) \right]\\
				&+10 (\kappa_{y}+1)\kappa_{y}^{2} \eta_{x}^{3}\ell^{2}\left(L_1^2+\frac{\sigma^{2}}{M}\right) \sum_{t=0}^{T-1}\sum_{j=0}^{t-1} \gamma^{t-1-j} \mathbb{E}\left[\left(\|\nabla_{x}\psi^{t}(x^{j},y^{j})-\nabla_{x}\psi(x^{j},y^{j})\|_F^{2}\right) \right]\\
				&+\frac{5}{4}\eta_{x}\left(\kappa_{y} L_1^2+\frac{\ell^{2}}{(\mu+\ell)^2}\left(\frac{\sigma^{2}}{M}+L_1^2\right)\right)\left(\sum_{t=0}^{T-1}\sum_{j=0}^{t-1} \gamma^{t-1-j}  \mathbb{E}\left\|\nabla_{y}\psi^{j}(x^{j},y^{j})-\nabla_{y}\psi(x^{j},y^{j})\right\|_{F}^{2}\right)\\
				&+\left(\left(\eta_{x}+2\kappa_{y}\ell\eta_{x}^{2} \right)L_{1}^{2}+\frac{2\kappa_{y}\ell\eta_{x}^{2}\sigma^{2}}{M}\right)\sum_{t=0}^{T-1}\mathbb{E}\left\|\nabla_{x}\psi^{t}(x^{t},y^{t})-\nabla_{x}\psi(x^{t},y^{t}) \right\|_F^{2}+\frac{\kappa_{y}\ell\eta_{x}^{2}\sigma^{2}}{M}T.
			\end{aligned}
		\end{equation}
	\end{small}Given $\eta_{x}=1 / 40(\kappa_{y}+1)^{2} \ell$, we may bound the terms on the right-hand side of the above inequality by
	\begin{small}
	\begin{equation*}
		\begin{array}{l}
			\gamma \leq 1-\frac{1}{10 \kappa_{y}}, \sum_{t=0}^{T-1}\gamma^{t}\leq 10\kappa_{y},\\ (\kappa_{y}+1)\kappa_{y}^{2}\eta_{x}^{3}\ell^{2}\leq\frac{\eta_{x}}{1600\kappa_{y}}, \kappa_{y}\ell\eta_{x}^{2}\leq\frac{\eta_{x}}{40 \kappa_{y}},(1+\kappa_{y})\kappa_{y}^{2}\eta_{x}^{3}\ell^{2}\leq\frac{\kappa_{y}}{1600}\eta_{x},\\
			\sum_{t=0}^{T-1} \sum_{j=0}^{t-1} \gamma^{t-1-j} \leq 10 \kappa_{y} T,\\
			\sum_{t=0}^{T-1} \sum_{j=0}^{t-1} \gamma^{t-1-j}\mathbb{E}\left\|\nabla \Phi\left(x^{j}\right)\right\|^{2} \leq 10 \kappa_{y}\left(\sum_{t=0}^{T-1}\mathbb{E}\left\|\nabla \Phi\left(x^{t}\right)\right\|^{2}\right),\\
			\sum_{t=0}^{T-1} \sum_{j=0}^{t-1} \gamma^{t-1-j} \mathbb{E}\left\|\nabla_{x}\psi(x^{j},y^{j})\right\|_{F}^{2} \leq 10 \kappa_{y}
			\sum_{t=0}^{T-1} \mathbb{E}\left\|\nabla_{x}\psi(x^{t},y^{t})\right\|_{F}^{2}\leq10\kappa_{y} L_0^2 T,\\
			\sum_{t=0}^{T-1} \sum_{j=0}^{t-1} \gamma^{t-1-j} \mathbb{E}\left\|\nabla_{y}\psi(x^{j},y^{j})\right\|_{F}^{2} \leq 10 \kappa_{y}
			\sum_{t=0}^{T-1} \mathbb{E}\left\|\nabla_{y}\psi(x^{t},y^{t})\right\|_{F}^{2}\leq 10\kappa_{y} L_0^2 T,\\
			\sum_{t=0}^{T-1} \sum_{j=0}^{t-1} \gamma^{t-1-j} \mathbb{E}\left\|\nabla_{x}\psi^{j}(x^{j},y^{j})-\nabla_{x}\psi(x^{j},y^{j})\right\|_{F}^{2} \leq 10 \kappa_{y} \sum_{t=0}^{T-1} \mathbb{E}\left\|\nabla_{x}\psi^{t}(x^{t},y^{t})-\nabla_{x}\psi(x^{t},y^{t})\right\|_{F}^{2},
			\\
			\sum_{t=0}^{T-1} \sum_{j=0}^{t-1} \gamma^{t-1-j} \mathbb{E}\left\|\nabla_{y}\psi^{j}(x^{j},y^{j})-\nabla_{y}\psi(x^{j},y^{j})\right\|_{F}^{2} \leq 10 \kappa_{y} \sum_{t=0}^{T-1} \mathbb{E}\left\|\nabla_{y}\psi^{t}(x^{t},y^{t})-\nabla_{y}\psi(x^{t},y^{t})\right\|_{F}^{2}.
		\end{array}
	\end{equation*}
	\end{small}
	Plugging the above inequalities into (\ref{eq:f_aftersum}) and rearranging the inequality, we arrive at the desired inequality  \eqref{eq:theorem1_concluding}. 
	The proof is complete.
\end{proof}

Clearly, the second term on the right-hand side of inequality \eqref{eq:theorem1_concluding} is a constant dependent on the batchsize $M$. On the other hand, under Assumptions \ref{ass-primal} and \ref{ass:stronglyconcave}, both $\Phi(x^{0})-\mathbb{E}[\Phi(x^{T})]$ and $D$ are bounded, which means the first term on the right-hand side of inequality \eqref{eq:theorem1_concluding} converges to zero at a rate of $\mathcal{O}(1/T)$.  
Moreover, when the distribution map follows a location-scale model, i.e., $\mathcal{D}(x,y) = Ax + By + \xi$ with $\xi \sim \mathbb{P}$, there exists an online least squares oracle outlined in~\cite{narang2023multiplayer} that guarantees  $$\mathbb{E}\left\|\nabla\psi^{t}(x^{t},y^{t})-\nabla\psi(x^{t},y^{t})\right\|_{F}^{2}\leq \frac{c}{t+d}, \forall t\in\mathbb{N}$$ holds for some $c, d>0$, which implies the following bound holds
\begin{equation*}
	\sum_{t=0}^{T-1}\mathbb{E}\|\nabla\psi^{t}(x^{t},y^{t})-\nabla\psi(x^{t},y^{t})\|_{F}^{2} \leq c\ln(T+d),
\end{equation*}
that is, the last two terms in~\eqref{eq:theorem1_concluding} vanish at a rate of $\mathcal{O}(\ln(T)/T)$.
Then we can conclude that Algorithm \ref{algorithm:ASGDA} finds an $\epsilon$-stationary point within $\mathcal{O}(\epsilon^{-(4+\delta)})$ stochastic gradient evaluations for $\forall \delta > 0$. 
More recently, Li et al. \cite{li2022tiada} study the nonconvex-strongly concave minimax problems and propose an stochastic gradient descent ascent algorithm by employing an adaptive stepsize strategy. When the objective function is  second-order Lipschitz continuous for $y$, they show that the proposed algorithm achieves $\mathcal{O}(\epsilon^{-(4+\delta)})$ stochastic gradient complexity with $M = \mathcal{O}(1)$. 
Under the condition that the objective function $\mathcal{L}(\cdot)$ is Lipschitz continuous for $(x,y)$ and second-order Lipschitz continuous for $y$, we may show that ASGDA with the adaptive stepsize strategy achieves $\mathcal{O}(\epsilon^{-(4+\delta)})$ stochastic gradient complexity with $M=\mathcal{O}(1)$ by a similar analysis.

\subsection{Nonconvex-concave SMDD}\label{subsection3_2}
	In this subsection, we assume the following condition.
	\begin{ass}{\em\cite{lin2020gradient}}\label{ass:concave}The objective function and constraint set satisfy
		
		(a) For $\forall x\in \mathbb{R}^n$, $\mathcal{L}(x,\cdot)$ is concave.
		
		(b) The constraint set $\mathcal{Y}$ is a compact convex set with diameter $D > 0$.
	\end{ass}
	Under Assumption \ref{ass:concave}, the primal function $\Phi(\cdot)$ may be non-differentiable, which motivates us to characterize the solution of SMDD \eqref{eq:f_minimax_D} by the Moreau envelope function
\begin{equation*}
	\Phi_{1/2\ell}(\cdot)=\min _{w\in \mathbb{R}^{n}} \Phi(w)+\ell\|w-\cdot\|^{2}.
\end{equation*}

\begin{defn}\label{def_concave_solution}
A point $\hat{x}$ is an $\epsilon$-stationary point of SMDD \eqref{eq:f_minimax_D} if $\left\|\nabla \Phi_{1 / 2 \ell}(\hat{x})\right\|\leq\epsilon$.
\end{defn}
Similarly, we first present two key lemmas.
The first lemma characterizes the evolution of the Moreau envelope of the primal function $\Phi(\cdot)$ in expectation over the iteration of $x$.
\begin{lem}\label{lemma_concave_1}
	Let $$\Delta_t=\mathbb{E}[\Phi(x^t)-\mathcal{L}(x^t,y^t)].$$
	Suppose that Assumptions \ref{ass-primal}--\ref{ass-objective} and 
	\ref{ass:concave} hold.
	Then
	\begin{equation}\label{eq:concave_descentPhi}
		\begin{aligned}
			\mathbb{E}\left[\Phi_{1 / 2 \ell}\left(x^{t}\right)\right]  \leq&\mathbb{E}\left[\Phi_{1 / 2 \ell}\left(x^{t-1}\right)\right]+2 \eta_{x}\ell\Delta_{t-1}-\frac{\eta_{x}}{8}\mathbb{E}\left\|\nabla \Phi_{1 / 2 \ell}\left(x^{t-1}\right)\right\|^{2} \\
			& +2(\eta_x+\eta_x^2\ell)L_{1}^{2}\mathbb{E}\left\|\nabla_{x}\psi^{t-1}(x^{t-1},y^{t-1})-\nabla_{x}\psi(x^{t-1},y^{t-1})\right\|_{F}^{2}\\
			&+2 \eta_{x}^{2}\ell L^{2}+\eta_{x}^{2}\ell\mathbb{E}\left[1+\left\|\nabla_{x}\psi^{t-1}(x^{t-1},y^{t-1})\right\|_{F}^{2}\right] \frac{\sigma^{2}}{M}.
		\end{aligned}
	\end{equation}
\end{lem}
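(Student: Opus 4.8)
The plan is to follow the Moreau-envelope descent argument for weakly convex primal functions (in the spirit of Davis--Drusvyatskiy and Lin et al.), adapted to the biased stochastic gradients produced by the distribution-map estimate $\psi^{t}$. Since $\mathcal{L}$ is $\ell$-smooth (Assumption \ref{ass_stronglyconcave}) and $\mathcal{L}(x,\cdot)$ is concave (Assumption \ref{ass:concave}), the primal function $\Phi$ is $\ell$-weakly convex, so $\Phi_{1/2\ell}$ is well defined and differentiable with $\nabla\Phi_{1/2\ell}(x)=2\ell(x-\hat{x})$, where $\hat{x}:=\arg\min_{w}\{\Phi(w)+\ell\|w-x\|^{2}\}$ is the unique proximal point. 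First I would fix the index $t-1$, write $\hat{x}^{t-1}$ for the proximal point of $x^{t-1}$, and use the two elementary facts $\Phi_{1/2\ell}(x^{t-1})=\Phi(\hat{x}^{t-1})+\ell\|\hat{x}^{t-1}-x^{t-1}\|^{2}$ (optimality of $\hat{x}^{t-1}$) and $\Phi_{1/2\ell}(x^{t})\le\Phi(\hat{x}^{t-1})+\ell\|\hat{x}^{t-1}-x^{t}\|^{2}$ (suboptimality at $x^{t}$). Subtracting reduces matters to controlling $\ell(\|\hat{x}^{t-1}-x^{t}\|^{2}-\|\hat{x}^{t-1}-x^{t-1}\|^{2})$.

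Next I would insert the update $x^{t}=x^{t-1}-\eta_{x}\widehat{G}^{t-1}$ with $\widehat{G}^{t-1}:=\frac{1}{M}\sum_{i}G_{x}^{t-1}(x^{t-1},y^{t-1},z_{i}^{t-1})$, expand the square, and take the conditional expectation $\mathbb{E}_{t-1}[\cdot]$. This produces a cross term $2\eta_{x}\ell\langle\hat{x}^{t-1}-x^{t-1},\mathbb{E}_{t-1}\widehat{G}^{t-1}\rangle$ and a second-moment term $\eta_{x}^{2}\ell\,\mathbb{E}_{t-1}\|\widehat{G}^{t-1}\|^{2}$. Writing $\mathbb{E}_{t-1}\widehat{G}^{t-1}=\nabla_{x}\mathcal{L}(x^{t-1},y^{t-1})+b^{t-1}$, the bias obeys $\|b^{t-1}\|\le L_{1}\|\nabla_{x}\psi^{t-1}(x^{t-1},y^{t-1})-\nabla_{x}\psi(x^{t-1},y^{t-1})\|_{F}$ by Lemma \ref{lem:expvar}(a), and the second moment is controlled through Lemma \ref{lem:expvar}(b).

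The crux is the cross term with the true gradient. Here I would chain three inequalities: the quadratic lower bound from $\ell$-smoothness of $\mathcal{L}(\cdot,y^{t-1})$, giving $\langle\nabla_{x}\mathcal{L}(x^{t-1},y^{t-1}),\hat{x}^{t-1}-x^{t-1}\rangle\le\mathcal{L}(\hat{x}^{t-1},y^{t-1})-\mathcal{L}(x^{t-1},y^{t-1})+\tfrac{\ell}{2}\|\hat{x}^{t-1}-x^{t-1}\|^{2}$; the pointwise domination $\mathcal{L}(\hat{x}^{t-1},y^{t-1})\le\Phi(\hat{x}^{t-1})$; and the proximal optimality $\Phi(\hat{x}^{t-1})-\Phi(x^{t-1})\le-\ell\|\hat{x}^{t-1}-x^{t-1}\|^{2}$. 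Combining them turns the cross term into the gap $\Phi(x^{t-1})-\mathcal{L}(x^{t-1},y^{t-1})$ minus $\tfrac{\ell}{2}\|\hat{x}^{t-1}-x^{t-1}\|^{2}$, and the identity $\|\hat{x}^{t-1}-x^{t-1}\|^{2}=\tfrac{1}{4\ell^{2}}\|\nabla\Phi_{1/2\ell}(x^{t-1})\|^{2}$ converts the latter into $-\tfrac{\eta_{x}}{4}\|\nabla\Phi_{1/2\ell}(x^{t-1})\|^{2}$. The leftover bias contribution $2\eta_{x}\ell\langle\hat{x}^{t-1}-x^{t-1},b^{t-1}\rangle$ I would split by Young's inequality (in the form $ab\le\tfrac{a^{2}}{8}+2b^{2}$, applied to $a=\|\nabla\Phi_{1/2\ell}(x^{t-1})\|$), so that it consumes exactly $\tfrac{\eta_{x}}{8}\|\nabla\Phi_{1/2\ell}(x^{t-1})\|^{2}$, degrading the coefficient $-\tfrac14$ to $-\tfrac18$ and leaving a term $2\eta_{x}L_{1}^{2}\|\nabla_{x}\psi^{t-1}-\nabla_{x}\psi\|_{F}^{2}$.

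Finally I would bound the second-moment term via $\|\mathbb{E}_{t-1}\widehat{G}^{t-1}\|^{2}\le 2\|\nabla_{x}\mathcal{L}(x^{t-1},y^{t-1})\|^{2}+2\|b^{t-1}\|^{2}\le 2L^{2}+2L_{1}^{2}\|\nabla_{x}\psi^{t-1}-\nabla_{x}\psi\|_{F}^{2}$, using the $L$-Lipschitz bound of Assumption \ref{ass_stronglyconcave} and Lemma \ref{lem:expvar}(b) for the variance part, which yields $2\eta_{x}^{2}\ell L^{2}$, an extra $2\eta_{x}^{2}\ell L_{1}^{2}\|\nabla_{x}\psi^{t-1}-\nabla_{x}\psi\|_{F}^{2}$, and the $\eta_{x}^{2}\ell(1+\|\nabla_{x}\psi^{t-1}\|_{F}^{2})\sigma^{2}/M$ term. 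Merging the two distribution-error contributions into $2(\eta_{x}+\eta_{x}^{2}\ell)L_{1}^{2}$ and taking total expectation gives \eqref{eq:concave_descentPhi}. I expect the cross-term chaining in the third paragraph to be the main obstacle, since it is where weak convexity, the max-structure $\mathcal{L}\le\Phi$, and the proximal optimality must be combined in precisely the right order; the subsequent bias handling through Young's inequality with the exact constant (passing from $1/4$ to $1/8$) is the secondary delicate point.
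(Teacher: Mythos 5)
Your proposal is correct and follows essentially the same route as the paper's proof: the same Moreau-envelope expansion around $\hat{x}^{t-1}=\operatorname{prox}_{\Phi/2\ell}(x^{t-1})$, the same three-step chaining of $\ell$-smoothness, $\mathcal{L}\le\Phi$, and proximal optimality for the cross term, the same bias/variance split via Lemma \ref{lem:expvar}, and a Young's-inequality absorption of the bias that degrades the coefficient from $-\tfrac{1}{4}$ to $-\tfrac{1}{8}$ (the paper parametrizes this with a free constant $\bar\Delta$ and sets $\bar\Delta=\ell/2$, which is identical to your $ab\le\tfrac{a^2}{8}+2b^2$). All constants match the statement, so no gap.
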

\begin{proof}
	The proof of Lemma \ref{lemma_concave_1} is provided in Appendix \ref{Appendix_for_Section3_2_Lem1}.
\end{proof}

The second Lemma provides an upper bound for the error $\Delta_{t}$.

\begin{lem}\label{lemma_concave_Delta}
	Let $\Delta_{t}$ be defined in Lemma \ref{lemma_concave_1}. Suppose that Assumptions \ref{ass-primal}--\ref{ass-objective} and 
	\ref{ass:concave} hold.
	Then for any integer $0< B\leq T+1$,
		\begin{small}
		\begin{equation}\label{eq:concave_sumDelta_4}
			\begin{aligned}
				\frac{1}{T+1}\sum_{t=0}^{T} \Delta_{t}\leq& \eta_{x} L L_{1}\left(1+L_{0}+L_{1}\right) (B+1) +\frac{\hat{\Delta}_{0}+\eta_{x} L L_{1}\left(1+L_{0}+L_{1}\right) B^{2}}{T+1}\\
				&+\frac{D^{2}}{2 \eta_{y}B}+\frac{D^{2}}{2 \eta_{y}(T+1)}+\frac{\eta_{x}}{2}L B
				\frac{1}{T+1}\sum_{t=0}^{T}\mathbb{E}\left\|\nabla_{x} \psi^{t}\left(x^{t},y^{t}\right)-\nabla_{x} \psi\left(x^{t},y^{t}\right)\right\|_{F}^{2}
				\\
				&+L_{1}D\frac{1}{T+1}\sum_{t=0}^{T}\mathbb{E}\left\|\nabla_{y}\psi^{t}(x^{t},y^{t})-\nabla_{y}\psi(x^{t},y^{t}) \right\|_{F}\\
				&
				+\frac{\eta_{y}}{2}\frac{1}{T+1}\sum_{t=0}^{T}\mathbb{E}\left[1+\left\|\nabla_{y}\psi^{t}(x^{t},y^{t})\right\|_F^{2} \right]\frac{\sigma^{2}}{M}\\
				&+\frac{\eta_{y}}{2}L_{1}^{2}\frac{1}{T+1}\sum_{t=0}^{T}\mathbb{E}\left\|\nabla_{y}\psi^{t}(x^{t},y^{t})-\nabla_{y}\psi(x^{t},y^{t}) \right\|_{F}^{2},
			\end{aligned}
		\end{equation}
		\end{small}
	where $\hat{\Delta}_{0}=\Phi\left(x^{0}\right)-\mathcal{L}\left(x^{0}, y^{0}\right)$.
\end{lem}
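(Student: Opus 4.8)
The plan is to adapt the sliding-window argument for nonconvex-concave GDA of Lin et al.~\cite{lin2020gradient} to the biased, decision-dependent stochastic gradients appearing here, the point of the window being to supply a comparison point for the non-unique maximizer $y^{\star}(\cdot)$. First I would analyze the single ascent step \eqref{equ:alg_y}. Writing $\widehat{G}_y^t=\frac1M\sum_{i=1}^{M}G_y^t(x^t,y^t,z_i^t)$, nonexpansiveness of the Euclidean projection onto the convex set $\mathcal{Y}$ yields, for any fixed $u\in\mathcal{Y}$, the three-point inequality $\langle\widehat{G}_y^t,u-y^t\rangle\le\frac{1}{2\eta_y}\bigl(\|y^t-u\|^2-\|y^{t+1}-u\|^2\bigr)+\frac{\eta_y}{2}\|\widehat{G}_y^t\|^2$, which already explains the $\frac{1}{2\eta_y}$ telescoping structure and the $\frac{\eta_y}{2}$ second-moment factor in \eqref{eq:concave_sumDelta_4}. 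Taking the conditional expectation $\mathbb{E}_t[\cdot]$ (legitimate once $u$ is chosen $\mathcal{F}_t$-measurable, see below), Lemma~\ref{lem:expvar}(c)--(d) bounds the bias of $\mathbb{E}_t\widehat{G}_y^t$ by $L_1\|\nabla_y\psi^t-\nabla_y\psi\|_F$ and its variance by $(1+\|\nabla_y\psi^t\|_F^2)\sigma^2/M$. Using concavity of $\mathcal{L}(x^t,\cdot)$ from Assumption~\ref{ass:concave}(a) to replace $\langle\nabla_y\mathcal{L}(x^t,y^t),u-y^t\rangle$ by the value gap $\mathcal{L}(x^t,u)-\mathcal{L}(x^t,y^t)$, and bounding the bias inner product by $L_1 D\|\nabla_y\psi^t-\nabla_y\psi\|_F$ via the diameter $D$ of Assumption~\ref{ass:concave}(b), directly produces the first-power $\nabla_y\psi$-error term, the variance term, and the $\frac{\eta_y}{2}L_1^2\|\nabla_y\psi^t-\nabla_y\psi\|_F^2$ term in the target bound.

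Next I would introduce the windowing. I partition $\{0,1,\dots,T\}$ into consecutive blocks of length $B$; inside the block with start index $t_0$, I hold the comparison point $u=y^{\star}(x^{t_0})$ fixed, so $u$ is $\mathcal{F}_{t_0}$-measurable and hence $\mathcal{F}_t$-measurable for every $t\ge t_0$, which is exactly what the conditional-expectation step needs. Telescoping $\|y^t-u\|^2-\|y^{t+1}-u\|^2$ across the block collapses to a boundary term $\|y^{t_0}-y^{\star}(x^{t_0})\|^2\le D^2$, and after averaging over the roughly $(T+1)/B$ blocks this yields the $\frac{D^2}{2\eta_y B}$ term (with the trailing partial block contributing the $\frac{D^2}{2\eta_y(T+1)}$ correction). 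The remaining task inside the block is to relate the frozen value $\mathcal{L}(x^t,y^{\star}(x^{t_0}))$ back to $\Phi(x^t)$: using $L$-Lipschitz continuity of $\mathcal{L}(\cdot,y)$ (Assumption~\ref{ass_stronglyconcave}) together with the induced $L$-Lipschitzness of $\Phi$, one shows $\mathcal{L}(x^t,y^{\star}(x^{t_0}))\ge\Phi(x^t)-2L\|x^t-x^{t_0}\|$, so that $\mathcal{L}(x^t,u)-\mathcal{L}(x^t,y^t)$ lower-bounds $\Delta_t^{\text{sample}}$ minus the $x$-drift over the window.

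I would then control that drift. Since $\|x^t-x^{t_0}\|\le\eta_x\sum_{s=t_0}^{t-1}\|\widehat{G}_x^s\|$ and Assumption~\ref{ass_bounded_moment} gives $\mathbb{E}_s\|G_x^s\|\le L_1\bigl(1+\|\nabla_x\psi^s\|_F\bigr)$ together with $\|\nabla_x\psi^s\|_F\le L_0+\|\nabla_x\psi^s-\nabla_x\psi\|_F$, summing the drift over a block of length $B$ produces a deterministic part scaling like $\eta_x L L_1(1+L_0+L_1)B$ and an estimation-error part proportional to $\eta_x L B\,\|\nabla_x\psi^s-\nabla_x\psi\|_F$; elementary Young/Cauchy--Schwarz inequalities convert the latter into the $\frac{\eta_x}{2}LB\,\|\nabla_x\psi^t-\nabla_x\psi\|_F^2$ term. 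Summing all block contributions, dividing by $T+1$, and isolating the first (incomplete) block together with the initial gap $\hat{\Delta}_0=\Phi(x^0)-\mathcal{L}(x^0,y^0)$ gives the $(B+1)$ leading term, the $\tfrac{1}{T+1}\bigl(\hat{\Delta}_0+\eta_x L L_1(1+L_0+L_1)B^2\bigr)$ boundary term, and finally \eqref{eq:concave_sumDelta_4}. The main obstacle, as usual in the nonconvex-concave regime, is precisely this sliding-window/drift bookkeeping: the non-uniqueness of $y^{\star}(x^t)$ forces a fixed reference per window, and one must simultaneously keep $B$ large enough that $D^2/(\eta_y B)$ is small yet small enough that the $\eta_x B$ drift stays controlled, all while the gradients are \emph{biased} so that Lemma~\ref{lem:expvar}'s error terms must be carried through every step rather than vanishing in expectation; leaving $B$ free in the statement is what lets the later theorem tune it against $\eta_x,\eta_y$ and the distribution-map error rate.
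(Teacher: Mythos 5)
Your plan follows the paper's proof essentially step for step: the same sliding window with the maximizer frozen at the block anchor $y^{\star}(x^{t_0})$, the same projection analysis of the ascent step combined with concavity of $\mathcal{L}(x^t,\cdot)$, the same use of Lemma~\ref{lem:expvar}(c)--(d) and the diameter $D$ for the bias inner product, the same Lipschitz control of the $x$-drift over a block via Assumption~\ref{ass_bounded_moment} and the $L_0$-Lipschitzness of $\psi$, and the same bookkeeping for $\hat{\Delta}_{0}$ and the trailing partial block. (Your inequality $\mathcal{L}(x^t,y^{\star}(x^{t_0}))\ge\Phi(x^t)-2L\|x^t-x^{t_0}\|$ is exactly the paper's $I_1+I_2$ split in disguise.)

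The one step that does not survive scrutiny as written is the ascent-step inequality. Your three-point bound carries the term $\frac{\eta_y}{2}\|\widehat{G}_y^t\|^2$, i.e.\ the \emph{full} second moment of the biased stochastic gradient, whereas the lemma's conclusion only contains $\frac{\eta_y}{2}\left(1+\|\nabla_y\psi^t\|_F^2\right)\frac{\sigma^2}{M}+\frac{\eta_y}{2}L_1^2\|\nabla_y\psi^t-\nabla_y\psi\|_F^2$, which upper-bounds $\frac{\eta_y}{2}\mathbb{E}_t\|\widehat{G}_y^t-\nabla_y\mathcal{L}(x^t,y^t)\|^2$ --- the deviation only. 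The difference is a residue of order $\eta_y\|\nabla_y\mathcal{L}(x^t,y^t)\|^2$, and under the stated assumptions $\nabla_y\mathcal{L}$ is not uniformly bounded (Assumption~\ref{ass_stronglyconcave} gives Lipschitz continuity only in $x$, and Assumption~\ref{ass_bounded_moment} bounds $\nabla_x l$ and $\nabla_z l$ but not $\nabla_y l$), so this term cannot be absorbed into the constants of \eqref{eq:concave_sumDelta_4}. The paper avoids it by retaining the $-\|y^{t+1}-y^t\|^2$ term produced by the projection, applying Young's inequality only to $\langle y^{t+1}-y^t,\widehat{G}_y^t-\nabla_y\mathcal{L}(x^t,y^t)\rangle$ so that only the bias and variance of $\widehat{G}_y^t$ enter, and converting the remaining $\langle y^{t+1}-y^t,\nabla_y\mathcal{L}(x^t,y^t)\rangle$ into $\mathcal{L}(x^t,y^{t+1})-\mathcal{L}(x^t,y^t)$ via $\ell$-smoothness; that difference is then telescoped together with $\mathcal{L}(x^{t+1},y^{t+1})-\mathcal{L}(x^t,y^t)$, which is where $\hat{\Delta}_0$ actually enters. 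With that substitution your argument goes through and recovers the stated bound.
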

\begin{proof}
	The proof of Lemma \ref{lemma_concave_Delta} is provided in Appendix \ref{Appendix_for_Section3_2_Lem2}.
\end{proof}

We are ready to study the convergence of ASGDA.

\begin{thm}\label{theorem_concave}
	Suppose that Assumptions \ref{ass-primal}--\ref{ass-objective} and 
	\ref{ass:concave} hold and the stepsizes $\eta_{x}=\frac{1}{(T+1)^{3/4}}$, $\eta_{y}=\frac{1}{(T+1)^{1/4}}$.
	Then
	\begin{small}
		\begin{equation}\label{eq_th2_concave}
			\begin{aligned}
				\frac{1}{T+1} \sum_{t=0}^{T} \mathbb{E}\left[\left\|\nabla \Phi_{1 / 2 \ell}\left(x^{t}\right)\right\|^{2}\right]\leq&\mathcal{O}\left(\frac{1}{T^{1/4}}\right)
				+\mathcal{O}\left(\frac{\sum_{t=0}^{T} \mathbb{E}\left\|\nabla_{y} \psi^{t}\left(x^{t},y^{t}\right)-\nabla_{y} \psi\left(x^{t},y^{t}\right)\right\|_{F}}{T+1}\right)\\
				&+\mathcal{O}\left( \frac{1}{(T+1)^{7/4}} \sum_{t=0}^{T} \mathbb{E} \left\| \nabla_x \psi^t \left( x^t, y^t \right) - \nabla_x \psi \left( x^t, y^t \right) \right\|_F^2 \right)\\
				&+\mathcal{O}\left(\frac{1}{(T+1)^{5/4}} \sum_{t=0}^{T} \mathbb{E} \left\| \nabla_y \psi^t (x^t, y^t) - \nabla_y \psi(x^t, y^t) \right\|_F^2\right).
			\end{aligned}
		\end{equation}
		\end{small}
\end{thm}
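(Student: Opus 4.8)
The plan is to fuse the two preceding lemmas. Lemma~\ref{lemma_concave_1} is a descent estimate for the Moreau envelope $\Phi_{1/2\ell}$ whose only term lacking a favorable sign is the duality gap $\Delta_{t-1}$, while Lemma~\ref{lemma_concave_Delta} bounds the running average of $\Delta_t$ at the cost of a free block length $B$. First I would sum inequality~\eqref{eq:concave_descentPhi} over the iterations, telescope the $\mathbb{E}[\Phi_{1/2\ell}]$ terms, and invoke Assumption~\ref{ass-primal} to replace the terminal envelope value by a finite lower bound $\underline{\Phi}$. Moving the gradient term to the left and dividing by $\tfrac{1}{8}\eta_x(T+1)$ yields a master inequality of the form
\begin{equation*}
	\frac{1}{T+1}\sum_{t=0}^{T}\mathbb{E}\left\|\nabla\Phi_{1/2\ell}(x^{t})\right\|^{2}
	\leq \frac{8\left(\Phi_{1/2\ell}(x^{0})-\underline{\Phi}\right)}{\eta_x(T+1)}
	+16\ell\cdot\frac{1}{T+1}\sum_{t=0}^{T}\Delta_{t}
	+ R ,
\end{equation*}
where $R$ gathers the variance and estimation-error contributions of~\eqref{eq:concave_descentPhi}; each of these carries an explicit factor $\eta_x^{2}\ell L^{2}$, $\eta_x^{2}\ell\sigma^{2}/M$, or $(\eta_x+\eta_x^{2}\ell)L_1^{2}$, which becomes a decaying coefficient after the division by $\eta_x(T+1)$.

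The decisive step is to substitute the bound~\eqref{eq:concave_sumDelta_4} for $\tfrac{1}{T+1}\sum_t\Delta_t$ and to choose $B$ so as to balance its two leading clean terms, $\eta_x LL_1(1+L_0+L_1)(B+1)$ and $D^{2}/(2\eta_y B)$. Equating their $B$-dependence gives $B=\Theta\!\big(\sqrt{D^{2}/(\eta_x\eta_y)}\big)$; with the prescribed stepsizes $\eta_x=(T+1)^{-3/4}$ and $\eta_y=(T+1)^{-1/4}$ one has $\eta_x\eta_y=(T+1)^{-1}$, so the optimal block length is $B=\Theta(\sqrt{T+1})$, an admissible integer in $(0,T+1]$, and both balanced terms collapse to $\mathcal{O}((T+1)^{-1/4})$. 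The remaining clean terms of~\eqref{eq:concave_sumDelta_4}, namely $\hat{\Delta}_0/(T+1)$, $\eta_x LL_1(1+L_0+L_1)B^{2}/(T+1)$, $D^{2}/(2\eta_y(T+1))$, and the constant $\tfrac{1}{2}\eta_y\sigma^{2}/M$, are all $\mathcal{O}((T+1)^{-1/4})$ or smaller once $B=\Theta(\sqrt{T+1})$ is inserted; together with the function-gap term $8(\Phi_{1/2\ell}(x^{0})-\underline{\Phi})/(\eta_x(T+1))=\mathcal{O}((T+1)^{-1/4})$ and the $\eta_x$-weighted pieces of $R$, they merge into the first $\mathcal{O}$-term of~\eqref{eq_th2_concave}.

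It then remains to route the distribution-estimation errors to their displayed rates. The unsquared term $L_1 D\,\tfrac{1}{T+1}\sum_t\|\nabla_y\psi^{t}-\nabla_y\psi\|_F$ of~\eqref{eq:concave_sumDelta_4}, carried through the factor $16\ell$, gives the $\mathcal{O}\big((T+1)^{-1}\sum\|\nabla_y\psi^{t}-\nabla_y\psi\|_F\big)$ term. The squared aggregates are each weighted by their accompanying stepsize: the $\nabla_y\psi$ errors appear through $\tfrac{1}{2}\eta_y L_1^{2}$ and the $\tfrac{1}{2}\eta_y\sigma^{2}/M$ variance term, so with $\eta_y=(T+1)^{-1/4}$ and the overall $1/(T+1)$ they yield the $\mathcal{O}((T+1)^{-5/4}\sum\|\nabla_y\psi^{t}-\nabla_y\psi\|_F^{2})$ rate, whereas the $\nabla_x\psi$ errors appear through the variance factor $\eta_x^{2}\ell\sigma^{2}/M$ of~\eqref{eq:concave_descentPhi}, so with $\eta_x=(T+1)^{-3/4}$ they yield the $\mathcal{O}((T+1)^{-7/4}\sum\|\nabla_x\psi^{t}-\nabla_x\psi\|_F^{2})$ rate. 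Throughout, the bare gradient norms $\|\nabla_x\psi^{t}\|_F^{2}$ and $\|\nabla_y\psi^{t}\|_F^{2}$ inside the variance terms are first split via $\|\nabla\psi^{t}\|_F^{2}\le 2\|\nabla\psi\|_F^{2}+2\|\nabla\psi^{t}-\nabla\psi\|_F^{2}$ and then reduced by the Lipschitz bound $\|\nabla\psi\|_F\le L_0$ from Assumption~\ref{ass-objective}, so their non-error parts contribute only constants absorbed into the leading term. I expect the main obstacle to be precisely this bookkeeping under the absence of strong concavity: with no geometric contraction of $\Delta_t$ available, the duality gap is tamed only through the block parameter $B$, and one must check that a single pair of polynomially decaying stepsizes simultaneously forces the $B$-balanced terms, the function-gap term, and all variance terms down to the common rate $\mathcal{O}((T+1)^{-1/4})$ while the three error aggregates separate out at the displayed rates.
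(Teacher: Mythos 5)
Your proposal follows essentially the same route as the paper: telescope the Moreau-envelope descent inequality of Lemma~\ref{lemma_concave_1}, substitute the block-averaged bound on $\frac{1}{T+1}\sum_t\Delta_t$ from Lemma~\ref{lemma_concave_Delta}, balance the block length at $B=\Theta\bigl(\sqrt{1/(\eta_x\eta_y)}\bigr)=\Theta(\sqrt{T+1})$, and absorb the $\|\nabla\psi^t\|_F^2$ terms via the $2\|\nabla\psi\|_F^2+2\|\nabla\psi^t-\nabla\psi\|_F^2$ split with the $L_0$ bound. The only slight imprecision is attributing the $\nabla_x\psi$ squared-error aggregate solely to the variance factor $\eta_x^2\ell\sigma^2/M$, whereas the dominant contribution comes from the bias term $2(\eta_x+\eta_x^2\ell)L_1^2$ in \eqref{eq:concave_descentPhi} (which after normalization by $\eta_x(T+1)$ carries an $\mathcal{O}(1/(T+1))$ coefficient); this does not affect the overall conclusion.
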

\begin{proof}
	Summing up the inequality \eqref{eq:concave_descentPhi} over $t=1,2,\cdots,T+1$, 
	\begin{equation*}\label{eq:concave_th_sum}
		\begin{aligned}
			\mathbb{E}\left[\Phi_{1 / 2 \ell}\left(x^{T+1}\right)\right]  
			\leq &\Phi_{1 / 2 \ell}\left(x^{0}\right)+2 \eta_{x} \ell \sum_{t=0}^{T} \Delta_{t}-\frac{\eta_{x}}{8} \sum_{t=0}^{T} \mathbb{E}\left\|\nabla \Phi_{1 / 2 \ell}\left(x^{t}\right)\right\|^{2} \\
			&+2\left(\eta_{x}+\eta_{x}^{2} \ell\right) L_{1}^{2} \sum_{t=0}^{T}
			\mathbb{E}\left\|\nabla_{x}\psi^{t}(x^{t},y^{t})-\nabla_{x}\psi(x^{t},y^t)\right\|_{F}^{2}+2 \eta_{x}^{2}\ell L^{2}(T+1)\\
			&+\eta_{x}^{2} \ell \sum_{t=0}^{T} \mathbb{E}\left(1+\left\|\nabla_{x}\psi^{t}(x^{t},y^{t})\right\|_{F}^{2}\right)\frac{\sigma^{2}}{M}.
		\end{aligned}
	\end{equation*}
	
	\noindent Then
	\begin{equation*}\label{eq:concave_th_sum_rerange}
		\begin{aligned}
			\frac{1}{T+1} \sum_{t=0}^{T} \mathbb{E}\left[\left\|\nabla \Phi_{1 / 2 \ell}\left(x^{t}\right)\right\|^{2}\right] \leq& \frac{8}{\eta_{x}(T+1)}\left(\Phi_{1 / 2 \ell}\left(x^{0}\right)-\mathbb{E}\left[\Phi_{1 / 2 \ell}\left(x^{T+1}\right)\right]\right)\\
			+&16 \ell \frac{1}{T+1} \sum_{t=0}^{T} \Delta_{t}+16 \eta_{x} \ell L^2 \\
			+&\frac{16\left(1+\eta_{x} \ell\right) L_{1}^{2}}{T+1} \sum_{t=0}^{T} \mathbb{E}\left\|\nabla_{x} \psi^{t}\left(x^{t},y^{t}\right)-\nabla_{x} \psi\left(x^{t},y^{t}\right)\right\|_{F}^{2} \\
			+& \frac{8\eta_{x}\ell}{T+1}  \sum_{t=0}^{T} \mathbb{E}\left(1+\left\|\nabla_{x} \psi^{t}\left(x^{t},y^{t}\right)\right\|_{F}^{2}\right)\frac{\sigma^{2}}{M}.
		\end{aligned}
	\end{equation*}
	Denote $\Delta_{\Phi}=\Phi_{1 / 2 \ell}\left(x^{0}\right)-\min _{x} \Phi_{1 / 2 \ell}(x)$. 
	Plugging \eqref{eq:concave_sumDelta_4} into the above inequality, by the boundedness of $\left\|\nabla\psi(x^t,y^t)\right\|_{F}$ and the fact that 
	\begin{equation*}
		\begin{aligned}
			\|\nabla_{x}\psi^{t}(x^{t},y^{t})\|_F^{2} \leq 2\|\nabla_{x}\psi(x^{t},y^{t})\|_F^{2}+2\|\nabla_{x}\psi^{t}(x^{t},y^{t})-\nabla_{x}\psi(x^{t},y^t)\|_F^{2},\\
			\|\nabla_{y}\psi^{t}(x^{t},y^{t})\|_F^{2} \leq 2\|\nabla_{y}\psi(x^{t},y^{t})\|_F^{2}+2\|\nabla_{y}\psi^{t}(x^{t},y^{t})-\nabla_{y}\psi(x^{t},y^t)\|_F^{2},
		\end{aligned}
	\end{equation*}
	we arrive at the following inequality
	\begin{equation*}
		\begin{aligned}
			&\frac{1}{T+1} \sum_{t=0}^{T} \mathbb{E}\left[\left\|\nabla \Phi_{1 / 2 \ell}\left(x^{t}\right)\right\|^{2}\right]\\
			\leq& \frac{8\Delta_{\Phi}}{\eta_{x}(T+1)} + \frac{8\ell D^2}{\eta_y (T+1)}+8 \eta_{x} \ell\left(2 L^{2}+\frac{\sigma^{2}}{M}\left(1+2 L_{0}^{2}\right)\right)\\
			&+ 16\ell\left( \eta_x L L_1 (1 + L_0 + L_1) (B + 1)+\frac{D^{2}}{ 2B \eta_{y}}+\eta_y(1+2L_{0}^{2})\frac{\sigma^2}{2M}\right) \\
			&+ \frac{16\ell (\hat{\Delta}_0 + \eta_x L L_1 (1 + L_0 + L_1) B^2)}{T+1} +\frac{16\ell L_1 D}{T+1} \sum_{t=0}^{T} \left\| \nabla_y \psi^t (x^t, y^t) - \nabla_y \psi(x^t, y^t) \right\|_{F} \\
			& + \frac{\eta_x\ell\left(16(L_1^2+\frac{\sigma^2}{M})+8LB\right)+16L_1^2}{T+1} \sum_{t=0}^{T} \mathbb{E} \left\| \nabla_x \psi^t \left( x^t, y^t \right) - \nabla_x \psi \left( x^t, y^t \right) \right\|_F^2 \\		
			&+ \frac{8\eta_y\ell (L_1^2+\frac{2\sigma^2}{M})}{T+1} \sum_{t=0}^{T} \mathbb{E} \left\| \nabla_y \psi^t (x^t, y^t) - \nabla_y \psi(x^t, y^t) \right\|_F^2.	
		\end{aligned}
	\end{equation*}
	
	\noindent Subsequently, setting $B=\left\lfloor\frac{D}{2} \sqrt{\frac{1}{\eta_{x} \eta_{y}LL_{1}(1+L_{0}+L_{1})}}\right\rfloor+1$, $\eta_{x}=\frac{1}{(T+1)^{3/4}}$ and $ \eta_{y}=\frac{1}{(T+1)^{1/4}}$ in the above inequality, we have

	\begin{equation*}
		\begin{aligned}
			&\frac{1}{T+1} \sum_{t=0}^{T} \mathbb{E}\left[\left\|\nabla \Phi_{1 / 2 \ell}\left(x^{t}\right)\right\|^{2}\right]\\
			\leq& \frac{8 \Delta_{\Phi}}{(T+1)^{1/4}}+\frac{8\ell D^{2}}{(T+1)^{3/4}}+24 \ell D \sqrt{\frac{LL_{1}\left(1+L_{0}+L_{1}\right)}{(T+1)^{1/2}}}
			\\
			&+\frac{8 \ell}{(T+1)^{3/4}}\left(2 L^{2}+\left(1+2 L_{0}^{2}\right)\frac{\sigma^{2}}{M}+4LL_{1}(1+L_0+L_1)\right)+\frac{8\ell (1+2L_{0}^{2})}{(T+1)^{1/4}}\frac{\sigma^2}{M}\\
			&+\frac{16\ell \hat{\Delta}_0 }{T+1}+ \frac{16\ell L L_1 (1 + L_0 + L_1) B^2}{(T+1)^{7/4}} +\frac{16\ell L_1 D}{T+1} \sum_{t=0}^{T} \left\| \nabla_y \psi^t (x^t, y^t) - \nabla_y \psi(x^t, y^t) \right\|_{F} \\
			& + \frac{\ell\left(16(L_1^2+\frac{\sigma^2}{M})+8LB\right)+16L_1^2}{(T+1)^{7/4}} \sum_{t=0}^{T} \mathbb{E} \left\| \nabla_x \psi^t \left( x^t, y^t \right) - \nabla_x \psi \left( x^t, y^t \right) \right\|_F^2 \\		
			&+ \frac{8\ell (L_1^2+\frac{2\sigma^2}{M})}{(T+1)^{5/4}} \sum_{t=0}^{T} \mathbb{E} \left\| \nabla_y \psi^t (x^t, y^t) - \nabla_y \psi(x^t, y^t) \right\|_F^2,	
		\end{aligned}
	\end{equation*}
	\noindent which implies inequality \eqref{eq_th2_concave}. The proof is complete.
\end{proof}

The last three terms on the right-hand side of inequality \eqref{eq_th2_concave} are related to the estimation error of the distribution map $\mathcal{D}(\cdot)$. Similarly, when the distribution map follows a location-scale model,  $\mathbb{E}\left\|\nabla\psi^{t}(x^{t},y^{t})-\nabla\psi(x^{t},y^{t})\right\|_{F}^{2}\leq \frac{c}{t+d}, \forall t\in\mathbb{N}$ holds for some $c, d>0$, and they converge to zero at the rate of $\mathcal{O}\left(\frac{1}{T^{1/2}}\right)$, $\mathcal{O}\left(\frac{\ln(T)}{T^{7/4}}\right)$ and $\mathcal{O}\left(\frac{\ln(T)}{T^{5/4}}\right)$, respectively. 
Then the convergence rate of Algorithm \ref{algorithm:ASGDA} is dominated by the first term on the right-hand side of inequality \eqref{eq_th2_concave}, that is, Algorithm \ref{algorithm:ASGDA} outputs an $\epsilon$-stationary point within $T=\mathcal{O}(\epsilon^{-8})$ iterations. This complexity result matches the complexity bound of SGDA \cite{lin2020gradient} for nonconvex-concave stochastic minimax problems.

\section{Convergence analysis of AASGDA}\label{subsection4}
In this section, we study the convergence of AASGDA under the following P{\L} condition.

\begin{ass}[P{\L} condition in  $y$]\label{ass:PL_DD}
	For any fixed  $x, \max _{y \in \mathbb{R}^{m}} \mathcal{L}(x, y)$ has a nonempty solution set and a finite optimal value. There exists  $\mu>0$  such that $$\left\|\nabla_{y} \mathcal{L}(x, y)\right\|^{2} \geq 2 \mu\left[\max _{y^{\prime}} \mathcal{L}(x, y^{\prime})-\mathcal{L}(x, y)\right], \forall (x,y)\in\mathbb{R}^{n}\times\mathbb{R}^{m}.$$
\end{ass}
P{\L} condition 
is originally proposed to show global linear convergence of gradient descent for unconstrained minimization problems \cite{karimi2016linear,polyak1963gradient}.
Under Assumption \ref{ass:PL_DD}, the primal function $\Phi(\cdot)$ is differentiable, and we may adopt Definition \ref{def_solution_stronglyconcave} as the stability metric.
\begin{thm}\label{thm_1_PL}
    Suppose that Assumptions \ref{ass-primal}--\ref{ass-objective} and \ref{ass:PL_DD} hold and the stepsizes $\eta_{x}=\min\{\frac{1}{16\sqrt{T}},\frac{1}{176\ell\kappa_{y}^{2}}\}$, $\eta_{y}=\min\{\frac{11\kappa_{y}^{2}}{\sqrt{T}},\frac{1}{\ell}\}$, where $\kappa_{y}=\ell/\mu$ is the condition number and $\ell$ is the smoothness parameter of $\mathcal{L}(\cdot)$. Then
	\begin{equation}\label{eq:th3_converge}
		\begin{aligned}
			\frac{1}{T}\sum_{t=                                                              0}^{T-1}\mathbb{E}\|\nabla\Phi(x^t)\|^{2}\leq&\frac{512\widehat{\Delta}_{\Phi}+\left(15\widehat{L}+2\ell\right)\sigma^{2}\left(1+2L_{0}^{2}\right)+30976\kappa_{y}^{2}\ell\sigma^{2}\left(1+L_{0}^{2}\right)}{16\sqrt{T}}+\frac{352\ell\kappa_{y}^{2}\widehat{\Delta}_{\Phi}}{T}
			\\
			&+\left(33L_{1}^{2}+\frac{2\ell L_{1}^{2}+(4\ell+30\widehat{L})\sigma^{2}}{176\ell\kappa_{y}^{2}}\right)\frac{1}{T}\sum_{t=0}^{T-1}\mathbb{E}\|\nabla_{x}\psi(x^t,y^t)-\nabla_{x}\psi^{t}(x^t,y^t)\|_{F}^{2}\\
			&+176\kappa_{y}^{2}\left(L_{1}^{2}+2\sigma^{2}\right)\frac{1}{T}\sum_{t=0}^{T-1}\mathbb{E}\|\nabla_{y}\psi(x^{t+1},y^t)-\nabla_{y}\psi^{t}(x^{t+1},y^t)\|_{F}^{2},
		\end{aligned}
	\end{equation}
	where $\widehat{\Delta}_{\Phi}=14\left[\Phi\left(x^{0}\right)-\min_{x}\Phi(x)\right]+\left[\Phi\left(x^{0}\right)-\mathcal{L}\left(x^{0}, y^{0}\right)\right]$, $\hat{L}=\ell+\frac{\ell\kappa_{y}}{2}$.
\end{thm}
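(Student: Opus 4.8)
The plan is to bound the running average $\frac{1}{T}\sum_{t=0}^{T-1}\mathbb{E}\|\nabla\Phi(x^{t})\|^{2}$ through a Lyapunov function coupling the primal value with the primal--dual gap $a_{t}:=\Phi(x^{t})-\mathcal{L}(x^{t},y^{t})\geq 0$. Reading off the coefficients in $\widehat{\Delta}_{\Phi}$, I would use $V_{t}:=14\big(\Phi(x^{t})-\min_{x}\Phi(x)\big)+a_{t}$, which is nonnegative and has $V_{0}=\widehat{\Delta}_{\Phi}$, and drive a one-step inequality $\mathbb{E}[V_{t+1}]\leq\mathbb{E}[V_{t}]-c\,\eta_{x}\,\mathbb{E}\|\nabla\Phi(x^{t})\|^{2}+(\text{error})$ that telescopes. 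Under Assumption \ref{ass:PL_DD}, $\Phi$ is differentiable with $\nabla\Phi(x)=\nabla_{x}\mathcal{L}(x,y^{\star}(x))$ and $\widehat{L}$-smooth with $\widehat{L}=\ell+\ell\kappa_{y}/2$, and the quadratic-growth consequence of P{\L}, $\|y^{t}-y^{\star}(x^{t})\|^{2}\leq\frac{2}{\mu}a_{t}$ (with $y^{\star}(x^{t})$ the nearest maximizer), replaces the role of strong concavity and Danskin's theorem used in Lemma \ref{lem_value}.

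For the \textbf{primal descent}, I would expand $\Phi(x^{t+1})$ along the $x$-update \eqref{equ:PL_alg_x} using $\widehat{L}$-smoothness and take $\mathbb{E}_{t}[\cdot]$, splitting the linear term $\langle\nabla\Phi(x^{t}),-\eta_{x}G_{x}^{t}\rangle$ into the true-gradient contribution, the map-estimation bias controlled by Lemma \ref{lem:expvar_PL}(a), and the mismatch $\nabla_{x}\mathcal{L}(x^{t},y^{t})-\nabla\Phi(x^{t})$, bounded by $\ell\|y^{t}-y^{\star}(x^{t})\|\leq\ell\sqrt{2a_{t}/\mu}$; the quadratic term is handled by Lemma \ref{lem:expvar_PL}(b). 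This yields a descent of the form $-\tfrac14\eta_{x}\|\nabla\Phi(x^{t})\|^{2}$ plus a multiple of $\eta_{x}\ell\kappa_{y}\,a_{t}$, plus the $\nabla_{x}\psi$ estimation error and the $\sigma^{2}$ variance term.

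The crux is the \textbf{gap recursion}. I would decompose $a_{t+1}-a_{t}$ into the primal increment $\Phi(x^{t+1})-\Phi(x^{t})$ (handled above), the $x$-induced change $\mathcal{L}(x^{t},y^{t})-\mathcal{L}(x^{t+1},y^{t})$ (bounded via smoothness by $\eta_{x}\|\nabla\Phi\|^{2}$-type and $\eta_{x}^{2}$-variance terms), and the $y$-ascent change $\mathcal{L}(x^{t+1},y^{t})-\mathcal{L}(x^{t+1},y^{t+1})$. For the latter I would use $\ell$-smoothness of $\mathcal{L}(x^{t+1},\cdot)$, the conditional expectation $\mathbb{E}_{\widehat{t}}[\cdot]$, and Lemma \ref{lem:expvar_PL}(c)--(d) evaluated at $(x^{t+1},y^{t})$; the ascent lower-bounds $\mathcal{L}(x^{t+1},y^{t+1})-\mathcal{L}(x^{t+1},y^{t})$ by roughly $\tfrac{\eta_{y}}{2}\|\nabla_{y}\mathcal{L}(x^{t+1},y^{t})\|^{2}$, and P{\L} turns this into $\mu\eta_{y}$ times the gap at $(x^{t+1},y^{t})$, producing a contraction factor $1-c\mu\eta_{y}$ on $a_{t}$.

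Combining $14\times$(primal descent) with the gap recursion, the $a_{t}$ coefficient becomes $14\,C\ell\kappa_{y}\eta_{x}-c\mu\eta_{y}$, which is nonpositive precisely because the two-timescale choice $\eta_{y}=176\kappa_{y}^{2}\eta_{x}$ makes $\mu\eta_{y}=176\ell\kappa_{y}\eta_{x}$ dominate (using $\ell/\mu=\kappa_{y}$), while the $\|\nabla\Phi\|^{2}$ coefficient stays negative; this is what forces the constant $14$. Telescoping the resulting inequality over $t=0,\dots,T-1$, dividing by $c\eta_{x}T$, and substituting $\eta_{x}=\min\{\tfrac{1}{16\sqrt{T}},\tfrac{1}{176\ell\kappa_{y}^{2}}\}$ and $\eta_{y}=\min\{\tfrac{11\kappa_{y}^{2}}{\sqrt{T}},\tfrac{1}{\ell}\}$ evaluates the $1/\sqrt{T}$, $1/T$, and error coefficients and gives \eqref{eq:th3_converge}, whose leading $\kappa_{y}^{2}/\sqrt{T}$ term produces the $\mathcal{O}(\kappa_{y}^{4}\epsilon^{-4})$ complexity. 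The hard part will be the biased-gradient bookkeeping: since $\psi^{t}$ is only an evolving estimate, Lemma \ref{lem:expvar_PL}(a),(c) inject $\|\nabla_{x}\psi^{t}-\nabla_{x}\psi\|_{F}^{2}$ at $(x^{t},y^{t})$ and $\|\nabla_{y}\psi^{t}-\nabla_{y}\psi\|_{F}^{2}$ at $(x^{t+1},y^{t})$ into the recursions — these are exactly the last two sums in \eqref{eq:th3_converge} — and the alternating updates condition on different filtrations ($\mathbb{E}_{t}$ versus $\mathbb{E}_{\widehat{t}}$) and sample from $\mathcal{D}(x^{t},y^{t})$ versus $\mathcal{D}(x^{t+1},y^{t})$, so the $y$-step variance and bias must be taken at $(x^{t+1},y^{t})$ and the $\kappa_{y}$ powers tracked through the coupling to land the stated constants.
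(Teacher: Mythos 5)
Your proposal follows essentially the same route as the paper: the Lyapunov function $14\bigl(\Phi(x^t)-\min_x\Phi\bigr)+\bigl(\Phi(x^t)-\mathcal{L}(x^t,y^t)\bigr)$ is exactly the paper's $V(x^t,y^t)=(1+\alpha)\Phi(x^t)-\alpha\mathcal{L}(x^t,y^t)$ with $\alpha=1/14$ (rescaled and shifted), and the decomposition into an $\widehat{L}$-smooth primal descent, an alternating $x$/$y$ change of $\mathcal{L}$ under the filtrations $\mathbb{E}_t$ and $\mathbb{E}_{\widehat t}$ with the bias/variance bounds of Lemma \ref{lem:expvar_PL}, and the two-timescale ratio $\eta_y=176\kappa_y^2\eta_x$ is the paper's argument. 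The only cosmetic difference is the accounting of the mismatch $\nabla_x\mathcal{L}(x^t,y^t)-\nabla\Phi(x^t)$: you charge it to the gap $a_t$ via quadratic growth and contract $a_t$ through the P{\L} ascent, whereas the paper charges it to $\kappa_y^2\|\nabla_y\mathcal{L}(x^t,y^t)\|^2$ and cancels it directly against the $\tfrac{\alpha\eta_y}{4}\|\nabla_y\mathcal{L}\|^2$ gain from the $y$-step; these are equivalent uses of the same P{\L} consequences.
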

\begin{proof}
	Define the following Lyapunov function
	\begin{equation}\label{eq:NCPL_Lyapunov}
		 V(x^t,y^t)=\Phi(x^t)+\alpha\left[\Phi(x^t)-\mathcal{L}(x^t,y^t)\right]=(1+\alpha)\Phi(x^t)-\alpha\mathcal{L}(x^t,y^t), \forall t \geq 0,
	 \end{equation}
	where $\alpha>0$ is an arbitrary constant. 
	
	We first establish the descent property of the first term $\Phi(\cdot)$ on the right-hand side of \eqref{eq:NCPL_Lyapunov} over iterations.
	Since $\mathcal{L}(\cdot)$ is $\ell$-smooth by Assumption \ref{ass_stronglyconcave} and satisfies P{\L}-inequality in $y$, we have by \cite[Lemma A.3]{yang2022faster} that 
	$\Phi(\cdot)$ is $\hat{L}$-smooth with $\hat{L}=\ell+\frac{\ell\kappa_{y}}{2}$, which implies
	\begin{equation*}
		\Phi\left(x^{t+1}\right) \leq \Phi\left(x^{t}\right)+\left\langle\nabla \Phi\left(x^{t}\right), x^{t+1}-x^{t}\right\rangle+\frac{\hat{L}}{2}\left\|x^{t+1}-x^{t}\right\|^{2}.
	\end{equation*}
	
	\noindent By the iteration of $x$ in \eqref{equ:PL_alg_x} and taking expectation on both sides of the above inequality, conditioned on $(x^{t},y^{t}, \psi^{t}(\cdot))$,
	\begin{small}
	\begin{equation*}
		\begin{aligned}
			\mathbb{E}_{t}\left[\Phi\left(x^{t+1}\right)\right]\leq& \Phi\left(x^{t}\right)-\eta_{x}\left\langle\nabla \Phi\left(x^{t}\right),\mathbb{E}_{t}\left[G_{x}^{t}(x^t,y^t,z_x^t)\right]\right\rangle+\frac{\hat{L}\eta_{x}^{2}}{2}\mathbb{E}_{t}\left\|G_{x}^{t}(x^t,y^t,z_x^t)\right\|^{2}\\
			\leq&\Phi\left(x^{t}\right)-\eta_{x}\left\langle\nabla \Phi\left(x^{t}\right),\mathbb{E}_{t}\left[G_{x}^{t}(x^t,y^t,z_x^t)\right]\right\rangle\\
			&+\frac{\widehat{L}\eta_{x}^{2}}{2}\left\|\mathbb{E}_{t}G_x^t(x^t,y^t,z_x^t)\right\|^{2}+\frac{\widehat{L}\eta_{x}^{2}}{2}(1+\left\|\nabla_x \psi^t(x^t, y^t)\right\|_{F}^2)\sigma^{2}\\
			\leq&\Phi\left(x^{t}\right)-\eta_{x}\left\langle\nabla \Phi\left(x^{t}\right),\mathbb{E}_{t}\left[G_{x}^{t}(x^t,y^t,z_x^t)\right]\right\rangle\\
			&+\frac{\eta_{x}}{2}\left\|\mathbb{E}_{t}G_x^t(x^t,y^t,z_x^t)\right\|^{2}+\frac{\widehat{L}\eta_{x}^{2}}{2}(1+\left\|\nabla_x \psi^t(x^t, y^t)\right\|_{F}^2)\sigma^{2}\\
			\leq&\Phi(x^t)-\frac{\eta_x}{2}\|\nabla\Phi(x^t)\|^{2}+\eta_{x}\mathbb{E}_{t}\left\|\nabla_{x}\mathcal{L}(x^t,y^t)-\mathbb{E}_{t}G_{x}^{t}(x^t,y^t,z_x^t)\right\|^{2}\\
			&+\eta_{x}\left\|\nabla_{x}\mathcal{L}(x^t,y^t)-\nabla\Phi(x^t)\right\|^{2}+\frac{\widehat{L}}{2}\eta_{x}^{2}\left( 1+\left\|\nabla_x \psi^t(x^t, y^t)\right\|_{F}^2\right)\sigma^2\\
			\leq&\Phi(x^{t})-\frac{\eta_{x}}{2}\left\|\nabla\Phi(x^{t}) \right\|^{2}+\eta_{x}\left\|\nabla\Phi(x^{t})-\nabla_{x}\mathcal{L}(x^t,y^t)\right\|^{2}\\
			&+\eta_{x}L_{1}^{2}\|\nabla_{x}\psi(x^t,y^t)-\nabla_{x}\psi^{t}(x^t,y^t)\|_{F}^{2}+\frac{\widehat{L}}{2}\eta_{x}^{2}\left( 1+\left\|\nabla_x \psi^t(x^t, y^t)\right\|_{F}^2\right)\sigma^2,
		\end{aligned}
	\end{equation*}
	\end{small}where the second inequality follows from Lemma \ref{lem:expvar_PL} (b), the third inequality follows from the fact that $\eta_{x}\leq\frac{1}{\widehat{L}}$, the fourth inequality follows from $\|a+b\|^{2}\leq 2\|a\|^{2}+2\|b\|^{2}$ and the last inequality follows from Lemma \ref{lem:expvar_PL} (a).
	
	\noindent Taking expectation on both sides of the above inequality,
	\begin{equation}\label{equ:th_Phi}
		\begin{aligned}
			\mathbb{E}\left[\Phi\left(x^{t+1}\right)\right]\leq&\mathbb{E}\left[\Phi(x^t)\right]-\frac{\eta_{x}}{2}\mathbb{E}\left\|\nabla\Phi(x^{t}) \right\|^{2}+\eta_{x}\mathbb{E}\left\|\nabla\Phi(x^{t})-\nabla_{x}\mathcal{L}(x^t,y^t)\right\|^{2}\\
			+&\eta_{x}L_{1}^{2}\mathbb{E}\|\nabla_{x}\psi(x^t,y^t)-\nabla_{x}\psi^{t}(x^t,y^t)\|_{F}^{2}+\frac{\widehat{L}}{2}\eta_{x}^{2}\left( 1+\mathbb{E}\left\|\nabla_x \psi^t(x^t, y^t)\right\|_{F}^2\right)\sigma^2.
		\end{aligned}
	\end{equation}
	
	Next, we establish the one-step improvement of the second term $\mathcal{L}(\cdot)$ on the right-hand side of \eqref{eq:NCPL_Lyapunov}.
	By the smoothness of $\mathcal{L}(x,\cdot)$ and the iteration of $y$ in \eqref{equ:PL_alg_y}, 
	\begin{equation*}
		\begin{aligned}
			\mathcal{L}(x^{t+1},y^{t+1})\geq&\mathcal{L}(x^{t+1},y^{t})+\left\langle\nabla_{y}\mathcal{L}(x^{t+1},y^{t}),y^{t+1}-y^{t}\right\rangle-\frac{\ell}{2}\|y^{t+1}-y^{t}\|^{2}\\
			=&\mathcal{L}(x^{t+1},y^{t})+\eta_{y}\left\langle\nabla_{y}\mathcal{L}(x^{t+1},y^{t}),G_{y}^{t}(x^{t+1},y^{t},z_y^t)\right\rangle-\frac{\ell}{2}\eta_{y}^{2}\|G_{y}^{t}(x^{t+1},y^{t},z_y^t)\|^{2}.
		\end{aligned}
	\end{equation*}
	Taking expectation on both sides of the above inequality, conditioned on $(x^{t+1},y^{t},\psi^{t}(\cdot))$,
	\begin{small}
	\begin{equation*}
		\begin{aligned}
			\mathbb{E}_{\widehat{t}}\left[\mathcal{L}(x^{t+1},y^{t+1})\right]\geq& \mathcal{L}(x^{t+1},y^{t})+\eta_{y}\left\langle\nabla_{y}\mathcal{L}(x^{t+1},y^{t}),\mathbb{E}_{\widehat{t}}\left[G_{y}^{t}(x^{t+1},y^{t},z_y^t)\right]\right\rangle-\frac{\ell}{2}\eta_{y}^{2}\mathbb{E}_{\widehat{t}}\left[\|G_{y}^{t}(x^{t+1},y^{t},z_y^t)\|^{2}\right]\\
			\geq&\mathcal{L}(x^{t+1},y^{t})+\eta_{y}\left\langle\nabla_{y}\mathcal{L}(x^{t+1},y^{t}),\mathbb{E}_{\widehat{t}}\left[G_{y}^{t}(x^{t+1},y^{t},z_y^t)\right]\right\rangle\\
			&-\frac{\eta_{y}}{2}\left\|\mathbb{E}_{\widehat{t}}\left[G_y^t(x^{t+1},y^{t},z_y^{t})\right]\right\|^{2}-\frac{\ell}{2}\eta_{y}^{2}\left(1+\left\|\nabla_{y} \psi^{t}(x^{t+1}, y^t)\right\|_{F}^2\right)\sigma^{2}\\
			\geq&\mathcal{L}(x^{t+1},y^{t})+\frac{\eta_{y}}{2}\|\nabla_{y}\mathcal{L}(x^{t+1},y^{t})\|^{2}-\frac{\eta_{y}L_{1}^{2}}{2}\|\nabla_{y}\psi^t(x^{t+1},y^t)-\nabla_{y}\psi(x^{t+1},y^t)\|_F^{2}\\
			&-\frac{\ell}{2}\eta_{y}^{2}\left(1+\left\|\nabla_{y} \psi^{t}(x^{t+1}, y^t)\right\|_{F}^{2}\right)\sigma^{2},
		\end{aligned}
	\end{equation*}
	\end{small}where $\mathbb{E}_{\widehat{t}}\left[\,\cdot\,\right]=\mathbb{E}\left[\,\cdot\,| (x^{t+1},y^t,\psi^t(\cdot))\right]$, the second inequality follows from Lemma \ref{lem:expvar_PL} (d) and the fact that $\eta_{y}\leq\frac{1}{\ell}$ and the last inequality follows from Lemma \ref{lem:expvar_PL} (c).
	
	\noindent Taking expectation on both sides of the above inequality,
	\begin{equation}\label{equ:th_L_y}
		\begin{aligned}
			\mathbb{E}\left[\mathcal{L}(x^{t+1},y^{t+1})\right]\geq& \mathbb{E}\left[\mathcal{L}(x^{t+1},y^{t})\right]+\frac{\eta_{y}}{2}\mathbb{E}\|\nabla_{y}\mathcal{L}(x^{t+1},y^{t})\|^{2}\\
			&-\frac{\eta_{y}L_{1}^{2}}{2}\mathbb{E}\left\|\nabla_{y}\psi^{t}(x^{t+1},y^t)-\nabla_{y}\psi(x^{t+1},y^t)\right\|^{2}\\
			&-\frac{\ell}{2}\eta_{y}^{2}\left(1+\mathbb{E}\left\|\nabla_{y} \psi^{t}(x^{t+1}, y^t)\right\|_{F}^{2}\right)\sigma^{2}.
		\end{aligned}
	\end{equation}
	
	\noindent On the other hand, by the smoothness of $\mathcal{L}(\cdot,y)$ and the iteration of $x$ in \eqref{equ:PL_alg_x},
	\begin{equation*}
		\begin{aligned}
			\mathcal{L}(x^{t+1},y^{t})\geq& \mathcal{L}(x^{t},y^{t})+\left\langle \nabla_{x}\mathcal{L}(x^t,y^t),x^{t+1}-x^{t}\right\rangle -\frac{\ell}{2}\|x^{t+1}-x^{t}\|^{2}\\
			=&\mathcal{L}(x^t,y^t)-\eta_{x}\left\langle\nabla_{x}\mathcal{L}(x^t,y^t),G_x^t(x^t,y^t,z_x^t) \right\rangle-\frac{\ell}{2}\eta_{x}^{2}\|G_x^t(x^t,y^t,z_x^t)\|^{2}.
		\end{aligned}
	\end{equation*}
	Taking expectation on both sides of the above inequality, conditioned on $(x^{t},y^{t},\psi^{t}(\cdot))$,
	\begin{small}
	\begin{equation*}
		\begin{aligned}
			\mathbb{E}_{t}[\mathcal{L}(x^{t+1},y^{t})]\geq&\mathcal{L}(x^t,y^t)-\eta_{x}\left\langle\nabla_{x}\mathcal{L}(x^t,y^t),\mathbb{E}_{t}\left[G_x^t(x^t,y^t,z_x^t)\right] \right\rangle-\frac{\ell}{2}\eta_{x}^{2}\mathbb{E}_{t}\|G_x^t(x^t,y^t,z_x^t)\|^{2}\\
			\geq&\mathcal{L}(x^t,y^t)-\eta_{x}\left\|\nabla_{x}\mathcal{L}(x^t,y^t) \right\|^{2}-\eta_{x}\left\langle\nabla_{x}\mathcal{L}(x^t,y^t),\mathbb{E}_{t}\left[G_x^t(x^t,y^t,z_x^t)\right] -\nabla_{x}\mathcal{L}(x^t,y^t)\right\rangle\\
			&-\frac{\ell}{2}\eta_{x}^{2}\left\|\mathbb{E}_{t}\left[G_x^t(x^t,y^t,z_x^t)\right] \right\|^{2}-\frac{\ell}{2}\eta_{x}^{2}\left(1+\left\|\nabla_x \psi^t(x^t, y^t)\right\|_{F}^2\right)\sigma^2\\
			\geq&\mathcal{L}(x^t,y^t)-\frac{3}{2}\eta_{x}\left\|\nabla_{x}\mathcal{L}(x^t,y^t) \right\|^{2}-\frac{1}{2}\eta_{x}\left\|\mathbb{E}_{t}\left[G_x^t(x^t,y^t,z_x^t)\right] -\nabla_{x}\mathcal{L}(x^t,y^t)\right\|^{2}\\
			&-\frac{\eta_{x}}{2}\left\|\mathbb{E}_{t}\left[G_x^t(x^t,y^t,z_x^t)\right] \right\|^{2}-\frac{\ell}{2}\eta_{x}^{2}\left(1+\left\|\nabla_x \psi^t(x^t, y^t)\right\|_{F}^2\right)\sigma^2\\
			\geq&\mathcal{L}(x^t,y^t)-\frac{5}{2}\eta_{x}\left\|\nabla_{x}\mathcal{L}(x^t,y^t) \right\|^{2}-\frac{3}{2}\eta_{x}L_{1}^{2}\|\nabla_{x}\psi(x^t,y^t)-\nabla_{x}\psi^{t}(x^t,y^t)\|_F^{2}\\
			&-\frac{\ell}{2}\eta_{x}^{2}\left(1+\left\|\nabla_x \psi^t(x^t, y^t)\right\|_{F}^2\right)\sigma^2,
		\end{aligned}
	\end{equation*}
	\end{small}where the second inequality follows from Lemma \ref{lem:expvar_PL} (b), the third inequality follows from Young's inequality and the fact that $\eta_{x}\leq\frac{1}{\ell}$
	and the last inequality follows from $\|a+b\|^{2}\leq 2\|a\|^{2}+2\|b\|^{2}$ and Lemma \ref{lem:expvar_PL} (a).
	
	\noindent Taking expectation on both sides of the above inequality,
	\begin{equation}\label{equ:th_L_x}
		\begin{aligned}
			\mathbb{E}[\mathcal{L}(x^{t+1},y^{t})]\geq& \mathbb{E}\left[\mathcal{L}(x^t,y^t)\right]-\frac{5}{2}\eta_{x}\mathbb{E}\left\|\nabla_{x}\mathcal{L}(x^t,y^t) \right\|^{2}-\frac{3}{2}\eta_{x}L_{1}^{2}\mathbb{E}\|\nabla_{x}\psi(x^t,y^t)-\nabla_{x}\psi^{t}(x^t,y^t)\|^{2}\\
			&-\frac{\ell}{2}\eta_{x}^{2}\left(1+\mathbb{E}\left\|\nabla_x \psi^t(x^t, y^t)\right\|_{F}^2\right)\sigma^2.
		\end{aligned}
	\end{equation}
	Then, by summing up \eqref{equ:th_L_y} and \eqref{equ:th_L_x}, we derive a unified bound on the variation of $\mathcal{L}(\cdot)$ over iterations
	\begin{equation}\label{eq:th_sumxy}
		\begin{aligned}
			&\mathbb{E}\left[\mathcal{L}(x^{t+1},y^{t+1})\right]-\mathbb{E}\left[\mathcal{L}(x^{t},y^{t})\right]\\
			\geq&\frac{\eta_{y}}{2}\mathbb{E}\|\nabla_{y}\mathcal{L}(x^{t+1},y^{t})\|^{2}-\frac{5}{2}\eta_{x}\mathbb{E}\left\|\nabla_{x}\mathcal{L}(x^t,y^t) \right\|^{2}\\
			&-\frac{3}{2}\eta_{x}L_{1}^{2}\mathbb{E}\|\nabla_{x}\psi(x^t,y^t)-\nabla_{x}\psi^{t}(x^t,y^t)\|_F^{2}-\frac{\ell}{2}\eta_{x}^{2}\left(1+\mathbb{E}\left\|\nabla_x \psi^t(x^t, y^t)\right\|_{F}^2\right)\sigma^2\\
			&-\frac{\eta_{y}L_{1}^{2}}{2}\mathbb{E}\|\nabla_{y}\psi(x^{t+1},y^t)-\nabla_{y}\psi^{t}(x^{t+1},y^t)\|^{2}-\frac{\ell}{2}\eta_{y}^{2}\left(1+\mathbb{E}\left\|\nabla_{y} \psi^{t}(x^{t+1}, y^t)\right\|_{F}^{2}\right)\sigma^{2}.
		\end{aligned}
	\end{equation}
	
	Combining \eqref{equ:th_Phi} and \eqref{eq:th_sumxy}, the Lyapunov function $V(\cdot)$ satisfies
	\begin{small}
		\begin{equation*}
			\begin{aligned}
				&\mathbb{E}\left[V(x^t,y^t)\right]-\mathbb{E}\left[V(x^{t+1}, y^{t+1})\right]\\ \geq&(1+\alpha)\frac{\eta_{x}}{2}\mathbb{E}\left\|\nabla\Phi(x^{t}) \right\|^{2}-(1+\alpha)\eta_{x}\mathbb{E}\left\|\nabla\Phi(x^{t})-\nabla_{x}\mathcal{L}(x^t,y^t)\right\|^{2}\\
				&+\alpha\frac{\eta_{y}}{2}\mathbb{E}\|\nabla_{y}\mathcal{L}(x^{t+1},y^{t})\|^{2}-\frac{5}{2}\alpha\eta_{x}\mathbb{E}\left\|\nabla_{x}\mathcal{L}(x^t,y^t) \right\|^{2}\\
				&-\left((1+\frac{5}{2}\alpha)\eta_{x}L_{1}^{2}\right)\mathbb{E}\|\nabla_{x}\psi(x^t,y^t)-\nabla_{x}\psi^{t}(x^t,y^t)\|_{F}^{2}-\alpha\frac{\eta_{y}L_{1}^{2}}{2}\mathbb{E}\|\nabla_{y}\psi(x^{t+1},y^t)-\nabla_{y}\psi^{t}(x^{t+1},y^t)\|_F^{2}
				\\	
				&-\frac{\eta_{x}^{2}\sigma^2}{2}\left((1+\alpha)\widehat{L}+\alpha\ell\right)\left( 1+\mathbb{E}\left\|\nabla_x \psi^t(x^t, y^t)\right\|_{F}^2\right)-\frac{\eta_{y}^{2}\sigma^{2}}{2}\alpha\ell\left(1+\mathbb{E}\left\|\nabla_{y} \psi^{t}(x^{t+1}, y^t)\right\|_{F}^{2}\right).
			\end{aligned}
		\end{equation*}
	\end{small}Furthermore, by applying the inequality $\|a + b\|^2 \leq 2\|a\|^2 + 2\|b\|^2$ to the last two terms on the right-hand side of the above inequality, and using $\|a\|^2 \geq \frac{1}{2} \|b\|^2 - \|a - b\|^2$ for $\|\nabla_y \mathcal{L}(x^{t+1}, y^t)\|^2$, along with the $\ell$-smoothness of $\mathcal{L}(\cdot)$, we obtain
		\begin{small}
	\begin{equation*}
		\begin{aligned}
			&\mathbb{E}\left[V(x^t,y^t)\right]-\mathbb{E}\left[V(x^{t+1}, y^{t+1})\right]\\ \geq&(1+\alpha)\frac{\eta_{x}}{2}\mathbb{E}\left\|\nabla\Phi(x^{t}) \right\|^{2}-(1+\alpha)\eta_{x}\mathbb{E}\left\|\nabla\Phi(x^{t})-\nabla_{x}\mathcal{L}(x^t,y^t)\right\|^{2}\\
			&+\alpha\frac{\eta_{y}}{4}\mathbb{E}\|\nabla_{y}\mathcal{L}(x^{t},y^{t})\|^{2}-\frac{\alpha\eta_{y}}{2}\ell^{2}\underbrace{\mathbb{E}\left\|x^{t+1}-x^{t}\right\|^{2}}_{I_{1}}-\frac{5}{2}\alpha\eta_{x}\mathbb{E}\left\|\nabla_{x}\mathcal{L}(x^t,y^t) \right\|^{2}\\
			&-\left((1+\frac{5}{2}\alpha)\eta_{x}L_{1}^{2}+\eta_{x}^{2}\sigma^2\left((1+\alpha)\widehat{L}+\alpha\ell\right)\right)\mathbb{E}\|\nabla_{x}\psi(x^t,y^t)-\nabla_{x}\psi^{t}(x^t,y^t)\|_{F}^{2}\\
			&-\left(\alpha\frac{\eta_{y}L_{1}^{2}}{2}+\alpha\eta_{y}^{2}\sigma^{2}\ell\right)\mathbb{E}\|\nabla_{y}\psi(x^{t+1},y^t)-\nabla_{y}\psi^{t}(x^{t+1},y^t)\|_{F}^{2}
			\\	
			&-\frac{\eta_{x}^{2}\sigma^2}{2}\left((1+\alpha)\widehat{L}+\alpha\ell\right)\left( 1+2\mathbb{E}\left\|\nabla_{x} \psi(x^t, y^t)\right\|_{F}^{2}\right)
			-\frac{\eta_{y}^{2}\sigma^{2}}{2}\alpha\ell\left(1+2\mathbb{E}\left\|\nabla_{y} \psi(x^{t+1}, y^t)\right\|_{F}^{2}\right).
		\end{aligned}
	\end{equation*}
	\end{small}
	
	\noindent For $I_1$, by the iteration of $x$ in \eqref{equ:PL_alg_x},
	\begin{small}
	\begin{equation*}
		\begin{aligned}
			I_{1}=&\eta_{x}^{2}\mathbb{E}\left\|G_{x}^{t}(x^t,y^t,z^t)\right\|^{2}\\
			\leq&2\eta_{x}^{2}\|\nabla_{x}\mathcal{L}(x^t,y^t)\|^{2}+2\eta_{x}^{2}\left(L_{1}^{2}+\sigma^{2}\right)\mathbb{E}\|\nabla_{x}\psi(x^t,y^t)-\nabla_{x}\psi^{t}(x^t,y^t)\|_{F}^{2}+\eta_{x}^{2}\left(1+2\left\|\nabla_{x} \psi(x^t, y^t)\right\|_{F}^{2}\right)\sigma^2,
		\end{aligned}
	\end{equation*}
	\end{small}where the inequality follows from Lemma \ref{lem:expvar_PL} (a)-(b) and the fact $\|a+b\|^{2}\leq 2\|a\|^{2}+2\|b\|^{2}$.
	
	\noindent Then,
		\begin{small}
		\begin{equation}\label{eq:th1_LyapDes}
			\begin{aligned}
				&\mathbb{E}\left[V(x^t,y^t)\right]-\mathbb{E}\left[V(x^{t+1}, y^{t+1})\right]\\
				\geq&(1+\alpha)\frac{\eta_{x}}{2}\mathbb{E}\left\|\nabla\Phi(x^{t}) \right\|^{2}
				-(1+\alpha)\eta_{x}\underbrace{\mathbb{E}\left\|\nabla_{x}\mathcal{L}(x^t,y^t)-\nabla\Phi(x^{t})\right\|^{2}}_{I_2}\\
				&+\frac{\alpha\eta_{y}}{4}\mathbb{E}\left\|\nabla_{y}\mathcal{L}(x^{t},y^t)\right\|^{2}-\left(\alpha\eta_{y}\ell^{2}\eta_{x}^{2}+\frac{5}{2}\alpha\eta_{x}\right)\underbrace{\mathbb{E}\|\nabla_{x}\mathcal{L}(x^t,y^t)\|^{2}}_{I_3}\\
				&-\left((1+\frac{5}{2}\alpha)\eta_{x}L_{1}^{2}+\eta_{x}^{2}\sigma^2\left((1+\alpha)\widehat{L}+\alpha\ell\right)+\alpha\eta_{y}\ell^{2}\eta_{x}^{2}\left(L_{1}^{2}+\sigma^{2}\right)\right)\mathbb{E}\|\nabla_{x}\psi(x^t,y^t)-\nabla_{x}\psi^{t}(x^t,y^t)\|_{F}^{2}\\
				&-\left(\alpha\frac{\eta_{y}L_{1}^{2}}{2}+\alpha\eta_{y}^{2}\sigma^{2}\ell\right)\mathbb{E}\|\nabla_{y}\psi(x^{t+1},y^t)-\nabla_{y}\psi^{t}(x^{t+1},y^t)\|_F^{2}\\
				&-\frac{\eta_{x}^{2}\sigma^2}{2}\left((1+\alpha)\widehat{L}+\alpha\ell+\alpha\eta_{y}\ell^{2}\right)\left( 1+2\mathbb{E}\left\|\nabla_{x} \psi(x^t, y^t)\right\|_{F}^{2}\right)
				-\frac{\eta_{y}^{2}\sigma^{2}}{2}\alpha\ell\left(1+2\mathbb{E}\left\|\nabla_{y} \psi(x^{t+1}, y^t)\right\|_{F}^{2}\right).
			\end{aligned}
		\end{equation}
		\end{small}Since $\mathcal{L}(x,\cdot)$ is $\ell$-smooth and satisfies P{\L} condition, by fixing $y^{\star}(x^t)$ as the projection of $y^{t}$ onto the set $\arg\max_{y}\mathcal{L}(x^t,y)$, we have by \cite[Lemma A.2]{yang2022faster} that
	$$I_2\leq \ell^{2}\mathbb{E}\left\|y^{t}-y^{\star}(x^t)\right\|^{2}\leq\kappa_{y}^{2}\mathbb{E}\left\|\nabla_{y}\mathcal{L}(x^t,y^t)\right\|^{2},$$
	where $\kappa_{y}=\frac{\ell}{\mu}$.
	By the fact that $\|a+b\|^{2}\leq2\|a\|^{2}+2\|b\|^{2}$,
	\begin{equation*}
		\begin{aligned}
			I_{3}\leq 2\mathbb{E}\left\|\nabla_{x}\mathcal{L}(x^t,y^t)-\nabla\Phi(x^t)\right\|^{2}+2\mathbb{E}\left\|\nabla\Phi(x^t)\right\|^{2}.
		\end{aligned}
	\end{equation*}
	Plugging the above inequalities into \eqref{eq:th1_LyapDes}, we have
		\begin{small}
		\begin{equation}\label{eq:th1_VV}
			\begin{aligned}
				&\mathbb{E}\left[V(x^t,y^t)\right]-\mathbb{E}\left[V(x^{t+1}, y^{t+1})\right]\\
				\geq&\left[(1+\alpha)\frac{\eta_{x}}{2}-2\left(\alpha\eta_{y}\ell^{2}\eta_{x}^{2}+\frac{5}{2}\alpha\eta_{x}\right)\right]\mathbb{E}\|\nabla\Phi(x^t)\|^{2}\\
				&+\left[\frac{\alpha\eta_{y}}{4}-(1+\alpha)\eta_{x}\kappa_{y}^{2}-2\left(\alpha\eta_{y}\ell^{2}\eta_{x}^{2}+\frac{5}{2}\alpha\eta_{x}\right)\kappa_{y}^{2}\right]\mathbb{E}\left\|\nabla_{y}\mathcal{L}(x^t,y^t)\right\|^{2}\\
				&-\left((1+\frac{5}{2}\alpha)\eta_{x}L_{1}^{2}+\eta_{x}^{2}\sigma^2\left((1+\alpha)\widehat{L}+\alpha\ell\right)+\alpha\eta_{y}\ell^{2}\eta_{x}^{2}\left(L_{1}^{2}+\sigma^{2}\right)\right)\mathbb{E}\|\nabla_{x}\psi(x^t,y^t)-\nabla_{x}\psi^{t}(x^t,y^t)\|_{F}^{2}\\
				&-\left(\alpha\frac{\eta_{y}L_{1}^{2}}{2}+\alpha\eta_{y}^{2}\sigma^{2}\ell\right)\mathbb{E}\|\nabla_{y}\psi(x^{t+1},y^t)-\nabla_{y}\psi^{t}(x^{t+1},y^t)\|_F^{2}\\
				&-\frac{\eta_{x}^{2}\sigma^2}{2}\left((1+\alpha)\widehat{L}+\alpha\ell+\alpha\eta_{y}\ell^{2}\right)\left( 1+2\left\|\nabla_{x} \psi(x^t, y^t)\right\|_{F}^{2}\right)
				\\	
				&-\frac{\eta_{y}^{2}\sigma^{2}}{2}\alpha\ell\left(1+2\left\|\nabla_{y} \psi(x^{t+1}, y^t)\right\|_{F}^{2}\right).
			\end{aligned}
		\end{equation}
		\end{small}Next, we analyze the coefficients of each term on the right-hand side of the above inequality. Setting $\alpha=\frac{1}{14}, \eta_{y}\leq\frac{1}{\ell}$ and $\eta_{x}\leq\frac{\eta_{y}}{176\kappa_{y}^{2}}$, the first term
	$$(1+\alpha)\frac{\eta_{x}}{2}-\frac{1}{7}\left(\eta_{y}\ell^{2}\eta_{x}^{2}+\frac{5}{2}\eta_{x}\right)\geq\frac{\eta_{x}}{28},$$
	the second term
	$$\frac{\alpha\eta_{y}}{4}-(1+\alpha)\eta_{x}\kappa_{y}^{2}-2\left(\alpha\eta_{y}\ell^{2}\eta_{x}^{2}+\frac{5}{2}\alpha\eta_{x}\right)\kappa_{y}^{2}\geq\frac{\eta_{y}}{112}\geq\frac{11}{7}\kappa_{y}^{2}\eta_{x},$$
	the third term 
	$$(1+\frac{5}{2}\alpha)\eta_{x}L_{1}^{2}+\eta_{x}^{2}\sigma^2\left((1+\alpha)\widehat{L}+\alpha\ell\right)+\alpha\eta_{y}\ell^{2}\eta_{x}^{2}\left(L_{1}^{2}+\sigma^{2}\right)
	\leq\frac{33}{28}L_{1}^{2}\eta_{x}+\frac{1}{14}\left(\ell L_{1}^{2}+(2\ell+15\widehat{L})\sigma^{2}\right)\eta_{x}^{2},$$
	the fourth term 
	$$\alpha\frac{\eta_{y}L_{1}^{2}}{2}+\alpha\eta_{y}^{2}\sigma^{2}\ell=\frac{44}{7}\kappa_{y}^{2}L_{1}^{2}\eta_{x}+\frac{15488}{7}\kappa_{y}^{4}\ell\sigma^{2}\eta_{x}^{2},$$
	the fifth term 
	$$\frac{\eta_{x}^{2}\sigma^2}{2}\left((1+\alpha)\widehat{L}+\alpha\ell+\alpha\eta_{y}\ell^{2}\right)\leq\left(\frac{15}{28}\widehat{L}+\frac{1}{14}\ell\right)\sigma^{2}\eta_{x}^{2},$$
	the last term
	$$\frac{\eta_{y}^{2}\sigma^{2}}{2}\alpha\ell=\frac{7744}{7}\kappa_{y}^{4}\ell\sigma^{2}\eta_{x}^{2}.$$
	\noindent Substituting the coefficient inequalities into \eqref{eq:th1_VV},
	\begin{equation*}
		\begin{aligned}
			&\mathbb{E}\left[V(x^t,y^t)\right]-\mathbb{E}\left[V(x^{t+1}, y^{t+1})\right]\\
			\geq&\frac{\eta_{x}}{28}\mathbb{E}\|\nabla\Phi(x^t)\|^{2}+\frac{11}{7}\kappa_{y}^{2}\eta_{x}\mathbb{E}\left\|\nabla_{y}\mathcal{L}(x^t,y^t)\right\|^{2}\\
			&-\left(\frac{33}{28}L_{1}^{2}\eta_{x}+\frac{1}{14}\left(\ell L_{1}^{2}+(2\ell+15\widehat{L})\sigma^{2}\right)\eta_{x}^{2}\right)\mathbb{E}\|\nabla_{x}\psi(x^t,y^t)-\nabla_{x}\psi^{t}(x^t,y^t)\|_{F}^{2}\\
			&-\left(\frac{44}{7}\kappa_{y}^{2}L_{1}^{2}\eta_{x}+\frac{15488}{7}\kappa_{y}^{4}\ell\sigma^{2}\eta_{x}^{2}\right)\mathbb{E}\|\nabla_{y}\psi(x^{t+1},y^t)-\nabla_{y}\psi^{t}(x^{t+1},y^t)\|_F^{2}\\
			&-\left(\frac{15}{28}\widehat{L}+\frac{1}{14}\ell\right)\sigma^{2}\eta_{x}^{2}(1+2\mathbb{E}\left\|\nabla_{x} \psi(x^t, y^t)\right\|_{F}^{2})-\frac{7744}{7}\kappa_{y}^{4}\ell\sigma^{2}\eta_{x}^{2}(1+2\mathbb{E}\left\|\nabla_{y} \psi(x^{t+1}, y^t)\right\|_{F}^{2}).
		\end{aligned}
	\end{equation*}
	Taking an average of the above inequality over $t=0\cdots T-1$,
	\begin{equation*}
		\begin{aligned}
			\frac{1}{T}\sum_{t=0}^{T-1}\mathbb{E}\|\nabla\Phi(x^t)\|^{2}&\leq\frac{28}{\eta_{x}T}\underbrace{\left(V(x^0,y^0)-V(x^T,y^T)\right)}_{I_4}\\
			&+\left(33L_{1}^{2}+\left(2\ell L_{1}^{2}+(4\ell+30\widehat{L})\sigma^{2}\right)\eta_{x}\right)\frac{1}{T}\sum_{t=0}^{T-1}\mathbb{E}\|\nabla_{x}\psi(x^t,y^t)-\nabla_{x}\psi^{t}(x^t,y^t)\|_{F}^{2}\\
			&+\left(176\kappa_{y}^{2}L_{1}^{2}+61952\kappa_{y}^{4}\ell\sigma^{2}\eta_{x}\right)\frac{1}{T}\sum_{t=0}^{T-1}\mathbb{E}\|\nabla_{y}\psi(x^{t+1},y^t)-\nabla_{y}\psi^{t}(x^{t+1},y^t)\|_F^{2}\\
			&+\left(15\widehat{L}+2\ell\right)\sigma^{2}\eta_{x}\left(1+2\mathbb{E}\left\|\nabla_{x} \psi(x^t, y^t)\right\|_{F}^{2}\right)\\
			&+30976\kappa_{y}^{4}\ell\sigma^{2}\eta_{x}\left(1+2\mathbb{E}\left\|\nabla_{y} \psi(x^{t+1}, y^t)\right\|_{F}^{2}\right).
		\end{aligned}
	\end{equation*}
	Since for $\forall x\in\mathbb{R}^n$ there exists $y$ such $\Phi(x)=\mathcal{L}(x,y)$, we have
	\begin{equation*}
		\begin{aligned}
			I_4\leq V(x^0,y^0)-\min _{x, y} V(x, y)
			=&\Phi\left(x^{0}\right)-\min_{x}\Phi(x)+\alpha\left[\Phi\left(x^{0}\right)-\mathcal{L}\left(x^{0}, y^{0}\right)\right].
		\end{aligned}
	\end{equation*}
	
	\noindent Then,
	\begin{equation*}
		\begin{aligned}
			\frac{1}{T}\sum_{t=0}^{T-1}\mathbb{E}\|\nabla\Phi(x^t)\|^{2}\leq&\frac{28}{\eta_{x}T}\left[\Phi(x^{0})-\min_{x}\Phi(x)\right]+\frac{2}{\eta_{x}T}\left[\Phi(x^{0})-\mathcal{L}(x^{0},y^{0})\right]\\
			+&\left(33L_{1}^{2}+\left(2\ell L_{1}^{2}+(4\ell+30\widehat{L})\sigma^{2}\right)\eta_{x}\right)\frac{1}{T}\sum_{t=0}^{T-1}\mathbb{E}\|\nabla_{x}\psi(x^t,y^t)-\nabla_{x}\psi^{t}(x^t,y^t)\|_{F}^{2}\\
			+&\left(176\kappa_{y}^{2}L_{1}^{2}+61952\kappa_{y}^{4}\ell\sigma^{2}\eta_{x}\right)\frac{1}{T}\sum_{t=0}^{T-1}\mathbb{E}\|\nabla_{y}\psi(x^{t+1},y^t)-\nabla_{y}\psi^{t}(x^{t+1},y^t)\|_F^{2}\\
			+&\left(15\widehat{L}+2\ell\right)\sigma^{2}\left(1+2\mathbb{E}\left\|\nabla_{x} \psi(x^t, y^t)\right\|_{F}^{2}\right)\eta_{x}\\
			+&30976\kappa_{y}^{4}\ell\sigma^{2}\left(1+2\mathbb{E}\left\|\nabla_{y} \psi(x^{t+1}, y^t)\right\|_{F}^{2}\right)\eta_{x}.
		\end{aligned}
	\end{equation*}
	Setting $\eta_{x}=\min\{\frac{1}{16\sqrt{T}},\frac{1}{176\ell\kappa_{y}^{2}}\}$,  $\eta_{y}=\min\{\frac{11\kappa_{y}^{2}}{\sqrt{T}},\frac{1}{\ell}\}$ in the above inequality, we have
	\begin{small}
	\begin{equation*}
		\begin{aligned}
			\frac{1}{T}\sum_{t=0}^{T-1}\mathbb{E}\|\nabla\Phi(x^t)\|^{2}\leq&\frac{32\widehat{\Delta}_{\Phi}}{\sqrt{T}}+\frac{352\ell\kappa_{y}^{2}\widehat{\Delta}_{\Phi}}{T}
			\\
			+&\left(33L_{1}^{2}+\frac{2\ell L_{1}^{2}+(4\ell+30\widehat{L})\sigma^{2}}{176\ell\kappa_{y}^{2}}\right)\frac{1}{T}\sum_{t=0}^{T-1}\mathbb{E}\|\nabla_{x}\psi(x^t,y^t)-\nabla_{x}\psi^{t}(x^t,y^t)\|_{F}^{2}\\
			+&176\kappa_{y}^{2}\left(L_{1}^{2}+2\sigma^{2}\right)\frac{1}{T}\sum_{t=0}^{T-1}\mathbb{E}\|\nabla_{y}\psi(x^{t+1},y^t)-\nabla_{y}\psi^{t}(x^{t+1},y^t)\|_{F}^{2}\\
			+&\frac{\left(15\widehat{L}+2\ell\right)\sigma^{2}\left(1+2\mathbb{E}\|\nabla_{x}\psi(x^{t},y^{t})\|_{F}^{2}\right)}{16\sqrt{T}}
			+\frac{30976\kappa_{y}^{2}\ell\sigma^{2}\left(1+2\mathbb{E}\|\nabla_{y}\psi(x^{t+1},y^{t})\|_{F}^{2}\right)}{16\sqrt{T}},
		\end{aligned}
	\end{equation*}
	\end{small}where $\widehat{\Delta}_{\Phi}=14\left[\Phi\left(x^{0}\right)-\min_{x}\Phi(x)\right]+\left[\Phi\left(x^{0}\right)-\mathcal{L}\left(x^{0}, y^{0}\right)\right]$. The desired inequality \eqref{eq:th3_converge} follows directly from $L_0$-Lipschitz continuity of $\psi(\cdot)$. The proof is complete.
\end{proof}
Given that $\widehat{\Delta}_{\Phi}$ is bounded under Assumption \ref{ass-primal}, the first two terms on the right-hand side of inequality \eqref{eq:th3_converge} converge to zero at rates of $\mathcal{O}(\frac{1}{\sqrt{T}})$ and $\mathcal{O}(\frac{1}{T})$, respectively. Moreover, the last two terms on the right-hand side of inequality \eqref{eq:th3_converge} depend on the estimation error of the distribution map, which converge to zero at the rate of $\mathcal{O}\left(\frac{\ln(T)}{T}\right)$ when the distribution map follows a location-scale model. Consequently, the convergence rate of Algorithm \ref{algorithm:AASGDA} is dominated by the first term on the right-hand side of inequality \eqref{eq:th3_converge}, that is, Algorithm \ref{algorithm:AASGDA} outputs an $\epsilon$-stationary point
within $T=\mathcal{O}(\kappa_{y}^{4}\epsilon^{-4})$ iterations.

\section{Experiments}\label{section5}
In this section, we verify the effectiveness of ASGDA and AASGDA. In Subsection \ref{Subsection5_1}, we test the effectiveness of ASGDA for nonconvex-strongly concave SMDD-I on a synthetic example and a real-world application. Similarly, in Subsection \ref{Subsection5_2}, we test the effectiveness of AASGDA for nonconvex-P{\L} SMDD-II. 
Both algorithms utilize the online least squares oracle described in \cite{narang2023multiplayer} for distribution map learning.

\vspace{-0.25cm}
\subsection{Experiments of ASGDA for SMDD-I}\label{Subsection5_1}
Consider the following stochastic minimax problem with  decision-dependent distribution~\cite{boct2023alternating}
\begin{equation}\label{experiment_1}
	\min_{x\in\mathbb{R}}\max_{y\in [-10,10]}\mathcal{L}(x,y):=\underset{z\sim\mathcal{D}(x,y)}{\mathbb{E}}[-xz+yz-\frac{1}{2}y^{2}],
\end{equation}
where $z=4x-y+\xi$, $\xi\sim\mathcal{N}(0,1)$. By some calculation, $\mathcal{L}(x,y)$ is $\ell$-smooth with $\ell=12$ and $\mathcal{L}(x,\cdot)$ is $\mu$-strongly concave with $\mu=3$. Then the condition number $\kappa_{y}=4$, and the corresponding primal function is a nonconvex differentiable function
\begin{equation*}
	\Phi(x)=\max_{y\in[-10,10]}\{-4x^{2}+5xy-\frac{3}{2}y^{2}\}=\left\{\begin{array}{cc}
		-4 x^{2}-50 x - 150, & x<-6, \\
		\frac{1}{6} x^{2}, & x \in\left[-6, 6\right], \\
		-4 x^{2}+50 x-150, & x>6.
	\end{array}\right.
\end{equation*}
Obviously, the unique stationary point of the primal function $\Phi(x)$ is $x^{\star}=0$ with $y^{\star}(x^{\star})=0$.

We compare ASGDA with the stochastic primal-dual (SPD) method \cite{wood2023stochastic}. In Algorithm \ref{algorithm:ASGDA}, the stepsizes $\eta_x=
\frac{1}{40(\kappa_{y}+1)^2\ell}=\frac{1}{12000}, \eta_{y}=\frac{1}{2(\mu+\ell)}=\frac{1}{30}$, the convergence accuracy $\epsilon=10^{-2}$ and the batchsize $M=200$.  In SPD, the constant stepsize $\eta=10^{-5}$, the dynamic stepsize  $\eta_{t}=\frac{1}{8\times10^{4}+t}$ and the batchsize $M=200$.
Moreover, the initial point is $(x^{0},y^{0})=(5, 5)$.

\begin{figure}[H]
	\centering
	\begin{minipage}[t]{0.37\textwidth}
		\centering
		\subfigure[]{
			\includegraphics[scale=0.42]{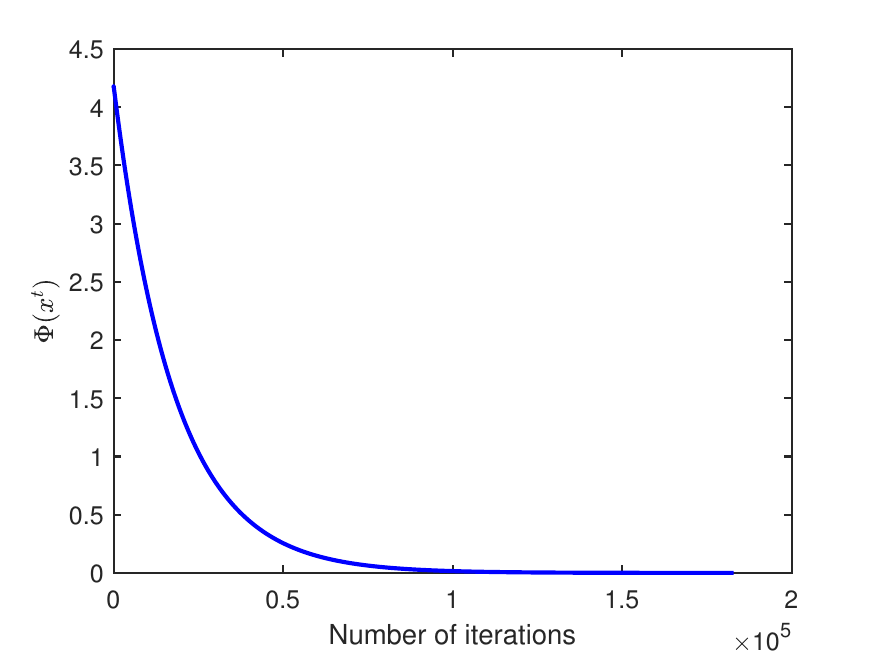}
		}
	\end{minipage}
	\begin{minipage}[t]{0.37\textwidth}
		\centering
		\subfigure[]{
			\includegraphics[scale=0.42]{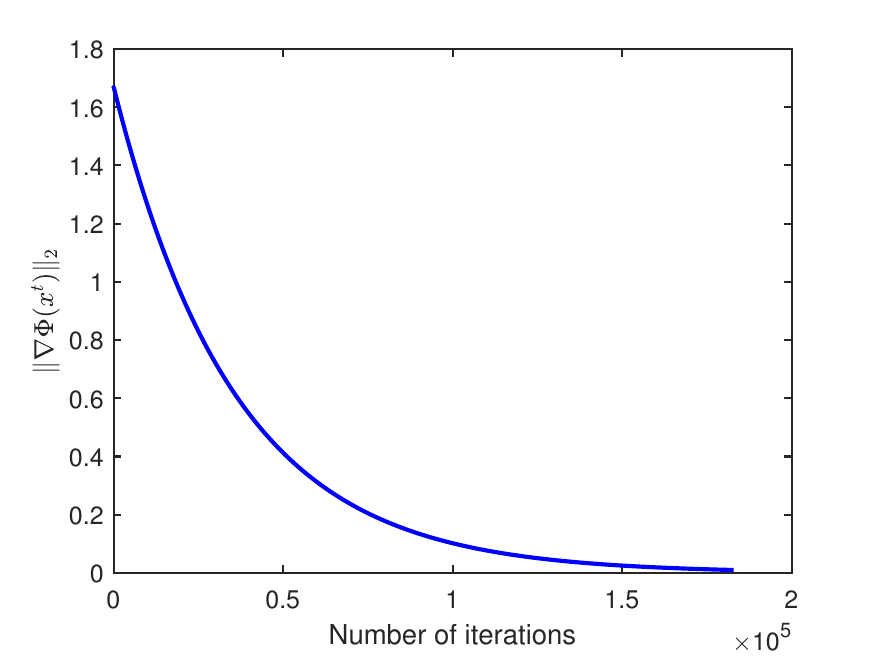}
		}
	\end{minipage}
	\caption{Performance of ASGDA}
	\vspace{-0.5cm}
	\label{fig_renzao_1}
\end{figure}
We record the performance of ASGDA in Figure \ref{fig_renzao_1}, where Figure \ref{fig_renzao_1} (a) depicts the value of the primal function and Figure \ref{fig_renzao_1} (b) depicts the norm of its gradient versus the number of iterations. Figure \ref{fig_renzao_1} shows that both the value of the primal function $\Phi(x)$ and the norm of its gradient tend to zero as the number of iterations increases, which verifies the convergence of ASGDA.

\begin{figure}[H]
	\centering
	\begin{minipage}[t]{0.37\textwidth}
		\centering
		\subfigure[]{
			\includegraphics[scale=0.42]{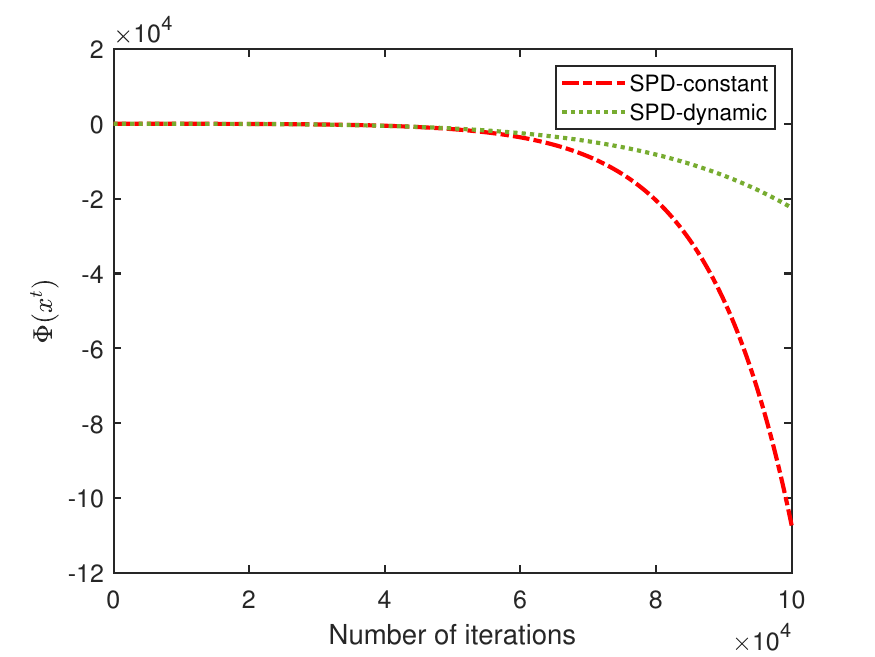}
		}
	\end{minipage}
	\begin{minipage}[t]{0.37\textwidth}
		\centering
		\subfigure[]{
			\includegraphics[scale=0.42]{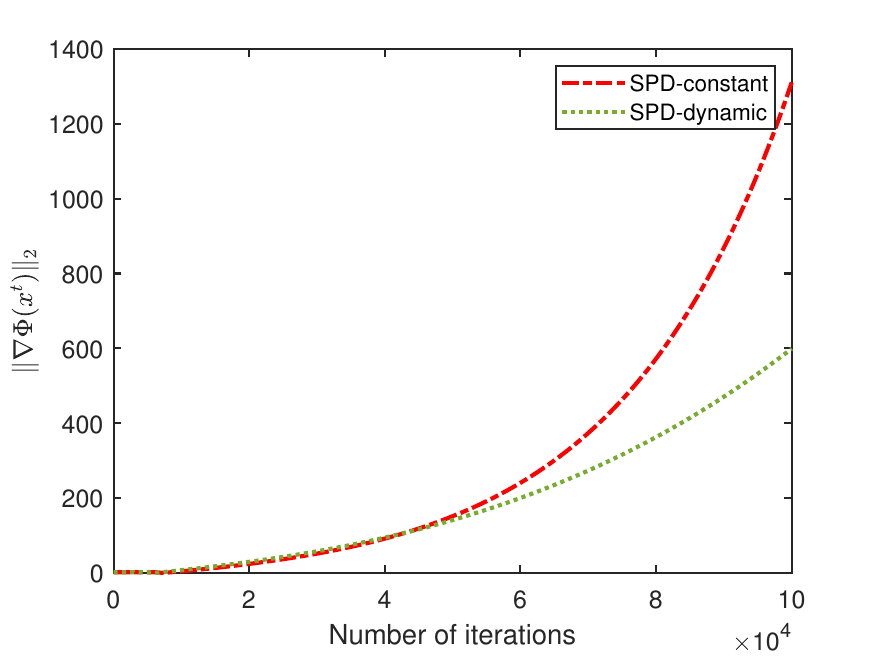}
		}
	\end{minipage}
	
	\caption{Performance of SPD}
	\vspace{-0.5cm}
	\label{fig:rezao_2}
	
\end{figure}
\vspace{-0.25cm}
Figure \ref{fig:rezao_2} (a) records the value of the primal function and Figure \ref{fig:rezao_2} (b) records the norm of its gradient versus the number of iterations, where the red dashed-dot curve and green dotted curve display the results corresponding to constant stepsize and dynamic stepsize, respectively. 
As shown in Figure \ref{fig:rezao_2}, the value of the primal function and the norm of its gradient diverge to infinity as the number of iterations increases.
The underlying reason may be that SPD is designed for finding the performative equilibrium point of SMDD with strongly convex-strongly concave  objective function.

\noindent\textbf{Distributionally robust strategic classification.}
Consider the distributionally robust strategic classification problem for loan approval in the bank, which has been introduced in Example \ref{example_1}. The corresponding logistic loss function, regularizer and distributionally robust regularizer are defined as follows $$\ell(x;a_{i},b_{i})=\log \left(1+\exp \left(-b_{i} (a_{i})^{\top} x\right)\right), $$  $$f(x)=\lambda_{1} \sum_{i=1}^{n} \frac{\alpha x_{i}^{2}}{1+\alpha x_{i}^{2}},\;\;\; g(y)=\frac{1}{2} \lambda_{2}\left\|N y-\mathbf{1}\right\|^{2}.$$
In the simulation, we adopt the Kaggle credit scoring dataset \cite{kaggle2012givemesomecredit} as the base dataset $S_0=\{\left(a_i^0,b_i^0\right)\}_{i=1}^N$, and the dataset with decision-dependent distribution $S=\{(a_i,b_i)\}_{i=1}^N$ follows a location scale model, i.e., $a_i=a_i^0+Ax, b_i=b_i^0$, where $A\in\mathbb{R}^{n\times n}$ is a matrix with all entries equal to 10 except the rows corresponding to the non-strategic features. Moreover, we set the parameters of SMDD (\ref{euq:experiment_real_example}) in Example \ref{example_1} as $\lambda_1=1, \lambda_2=\frac{10}{N^2}$ and $\alpha=1$. For ASGDA, the stepsizes $\eta_x=10^{-3},\eta_y=10^{-1}$, and for SPD, the constant stepsize $\eta=10^{-2}$
and the dynamic stepsize $\eta_{t}=\frac{1}{10+t}$, respectively. We set the batchsizes $M=200$ in both algorithms.
\begin{figure}[H]
	\centering
	\begin{minipage}[t]{0.37\textwidth}
		\centering
		\subfigure[]{		
			\includegraphics[scale=0.42]{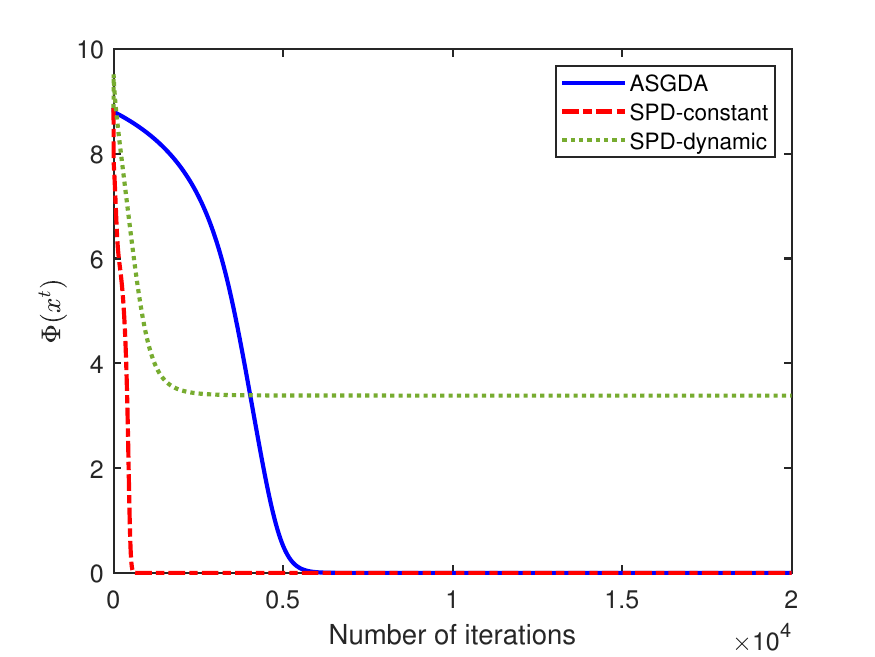}
		}
	\end{minipage}
	\begin{minipage}[t]{0.37\textwidth}
		\centering
		\subfigure[]{
			\includegraphics[scale=0.42]{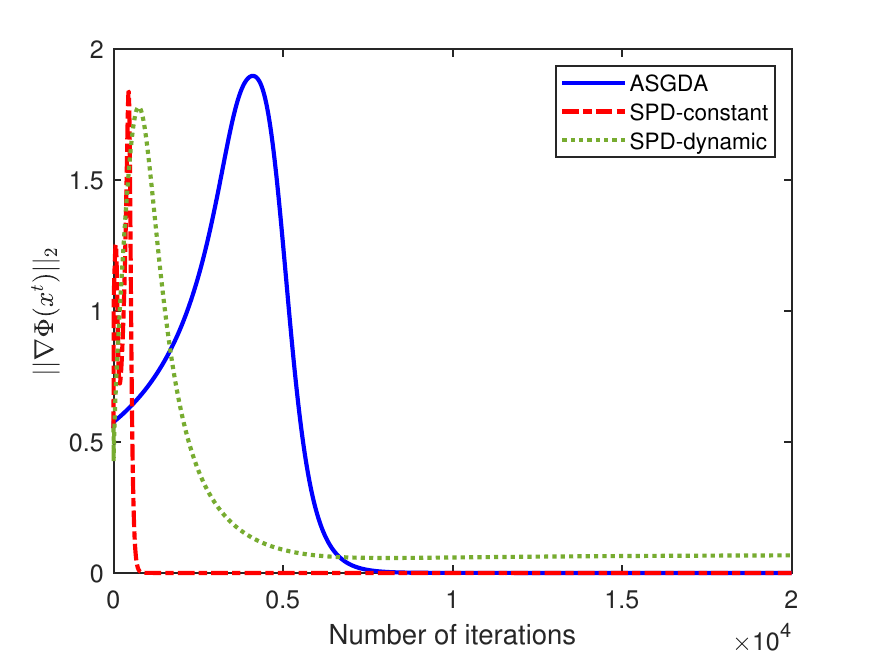}
		}
	\end{minipage}
	\begin{minipage}[t]{0.37\textwidth}
		\centering
		\subfigure[]{
			\includegraphics[scale=0.42]{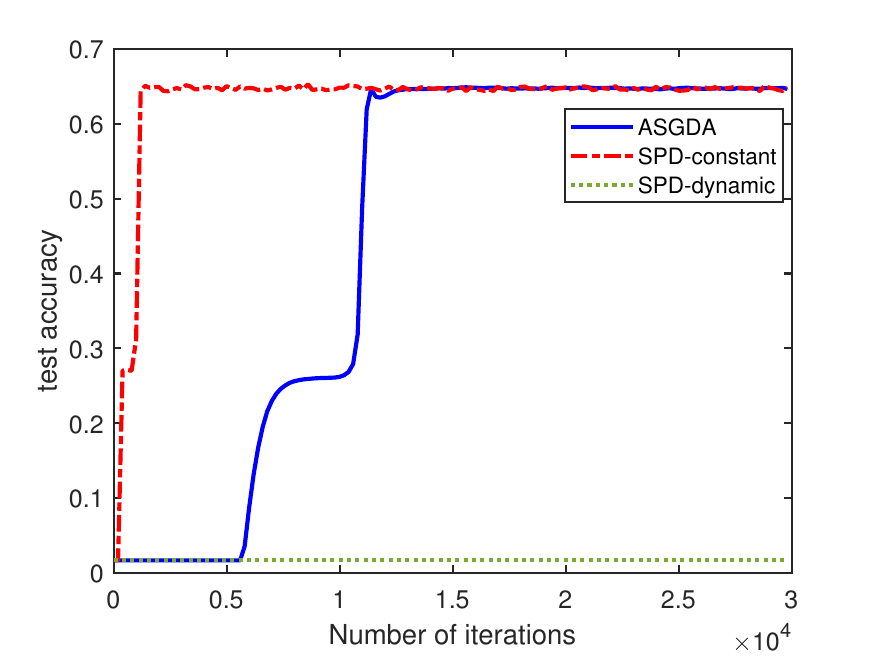}
		}
	\end{minipage}
	\caption{Performance of ASGDA and SPD}
	\label{fig:converge}
\end{figure}
We report the performance of ASGDA and SPD in Figure \ref{fig:converge}, where Figure \ref{fig:converge} (a), (b) and (c) record the performance of the value of the primal function, the norm of its gradient on the training dataset and the classification accuracy on the testing dataset of the trained classifier $x^t$, respectively.
We can observe from Figure \ref{fig:converge} (a) and (b) that the value of the primal function and the norm of its gradient corresponding to ASGDA and SPD with constant stepsize tend to zero as the number of iterations increases. For SPD with dynamic stepsize, both values tend to some constant bounded away from zero. 
Moreover, Figure \ref{fig:converge} (c) shows that both ASGDA and SPD with constant stepsize achieve a classification accuracy of approximately 65 \%.
On the other hand, compared with SPD, ASGDA requires more iterations to reach the stationary point. The underlying reason may be that ASGDA is a 
two-time-scale algorithm, whereas SPD is a single-time-scale algorithm.

\subsection{Experiments of AASGDA for SMDD-II}\label{Subsection5_2}
Consider the following stochastic minimax problem with decision-dependent distribution \cite{laguel2024high}
\begin{equation}\label{eq_experiment_4_3_1}
	\underset{x\in\mathbb{R}}{\min}\,\underset{y\in\mathbb{R}}\max\,\mathcal{L}(x,y):=\underset{z\sim\mathcal{D}(x,y)}{\mathbb{E}}\left[2\left(x+\sin(x)\right)z-4\left(y^{2}+3\sin^{2}(y)\right)\right],
\end{equation}
where $z=x+2y+\xi,\xi\sim\mathcal{N}(0,1)$. By some calculation, $\mathcal{L}(x,y)$ is $\ell$-smooth with $\ell=32$, and $\mathcal{L}(x,\cdot)$ is $\mu$-P{\L} with $\mu=8$, then the condition number $\kappa_{y}=4$. The solution to the problem is $(x^{\star},y^{\star})=(0,0)$.

We choose the initial point $(x^{0},y^{0})=(10, 10)$ and the number of iterations $T=1\times 10^5$. We compare AASGDA with the stochastic primal-dual (SPD) method \cite{wood2023stochastic}. In Algorithm \ref{algorithm:AASGDA}, the stepsizes $\eta_x=1.1097\times 10^{-5}, \eta_y=0.0313$ are calculated according to Theorem \ref{thm_1_PL}. In SPD, the stepsize $\eta=1\times 10^{-3}$. 
 \begin{figure}[H]
 	\centering
 	\begin{minipage}[t]{0.37\textwidth}
 		\centering
 		\subfigure[]{
 			\includegraphics[scale=0.42]{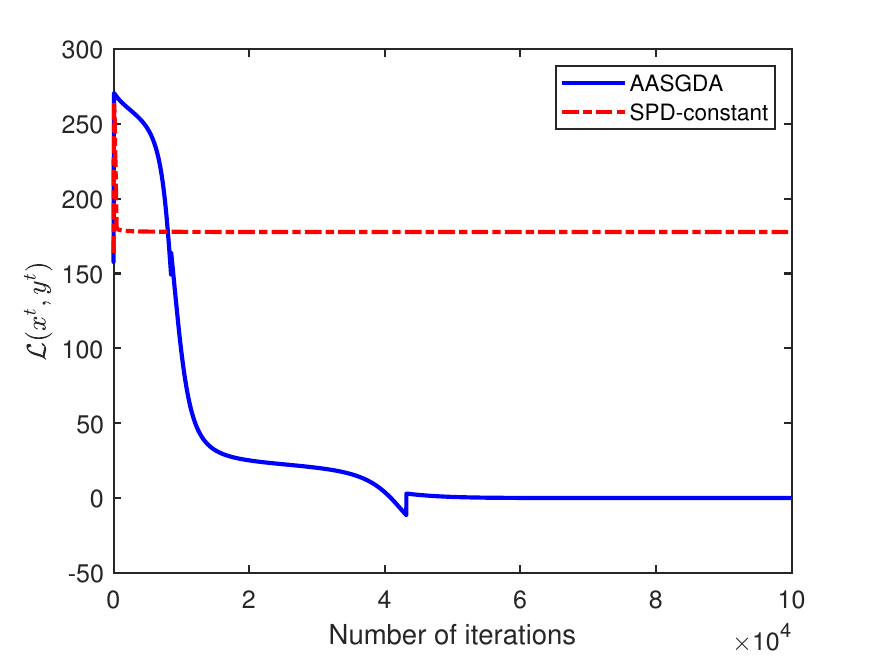}
 		}
 	\end{minipage}
 	\begin{minipage}[t]{0.37\textwidth}
 		\centering
 		\subfigure[]{
 			\includegraphics[scale=0.42]{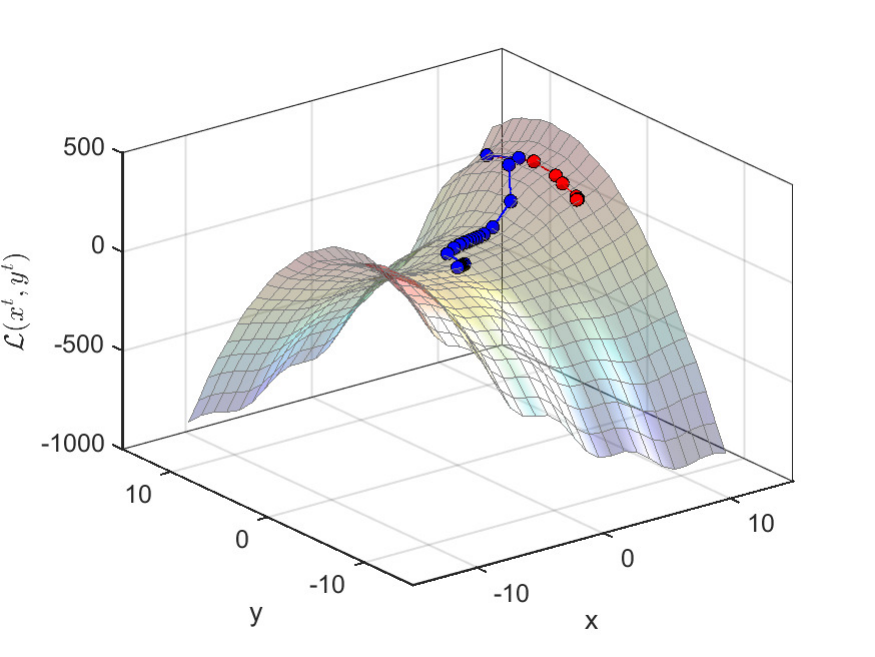}
 		}
 	\end{minipage}
 	\caption{Performance of AASGDA and SPD on synthetic example}
 	\label{fig_renzao_3}
 \end{figure}
 
 We record the performance of AASGDA and SPD in Figure \ref{fig_renzao_3}, where Figure \ref{fig_renzao_3} (a) depicts the value of the  objective function versus the number of iterations and Figure \ref{fig_renzao_3} (b) depicts the trajectory of the iterates $\{x^{t},y^{t}\}$. As we can observe from Figure \ref{fig_renzao_3} (a), the blue dashed curve corresponding to ASGDA tends to zero as the number of iterations increases, and the red dashed-dot curve corresponding to SPD tends to some constant bounded away from zero. 
 At the same time, Figure \ref{fig_renzao_3} (b) shows that AASGDA and SPD converges to different points.
The underlying reason may be that ASGDA is designed for finding the stationary point of SMDD, and SPD is designed for finding the performative equilibrium point of SMDD.

 \noindent\textbf{Election prediction.}
 Consider the following stochastic minimax problem arising from the interaction between two election prediction platforms in an election contest, which has been introduced in Example \ref{example_2}
 \begin{equation}\label{eq_experiment_4_3_2}
 	\min_{x\in\mathbb{R}^{n}}\max_{y\in\mathbb{R}^{n}}\mathcal{L}(x,y)=\frac{1}{2}\mathbb{E}\left[\|z_{1}-\theta^{\top}x\|^{2}-\|z_{2}-\theta^{\top}y\|^{2}\right]+\frac{1}{2}\|x\|^{2}-\frac{1}{2}\|y\|^{2},
 \end{equation}
 where the feature $\theta\in\mathbb{R}^{n\times d}$, $\theta_{ij}\sim\mathcal{N}(0,0.01)$, $i=1\cdots n, j=1\cdots d$, $z_i$ follows the following conditional distribution 
\begin{equation*}
	\begin{aligned}
		z_{i} \mid \theta &\sim \theta^{\top}\mathbf{1}_{n\times 1}+A_{i} x +B_{i} y+\omega_{i}, i=1,2,
	\end{aligned}
\end{equation*}
with the random variable $\omega_{i}\in\mathbb{R}^{d}$,  $\omega_{ij}\sim\mathcal{N}(0,0.01), j=1,\cdots d$ and $A_{i}, B_{i}\in\mathbb{R}^{d\times n}$ are fixed parameter matrices.

\begin{figure}[H]
	\centering
	\begin{minipage}[t]{0.37\textwidth}
		\centering
		\subfigure[]{
			\includegraphics[scale=0.42]{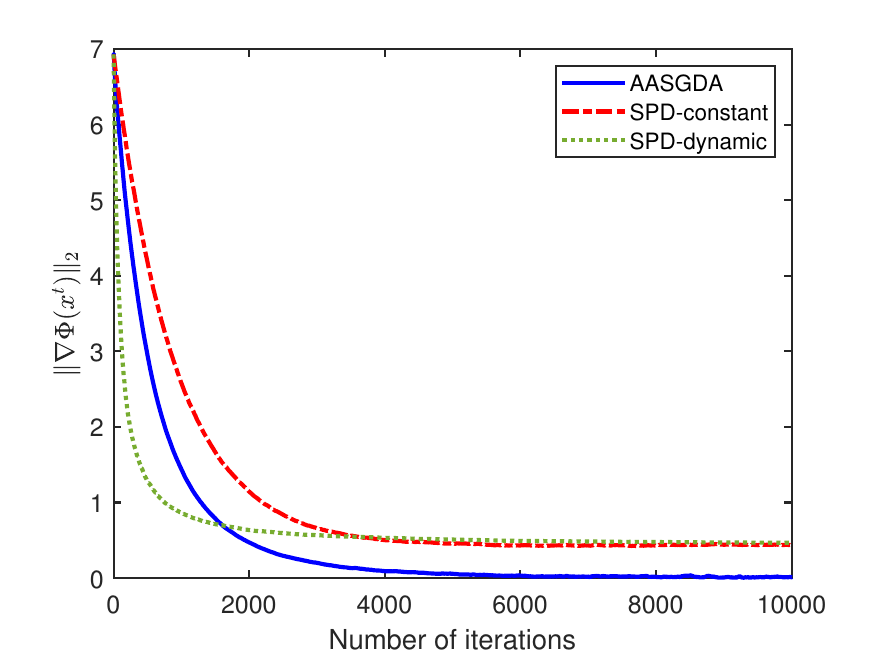}
		}
	\end{minipage}
	\begin{minipage}[t]{0.37\textwidth}
		\centering
		\subfigure[]{
			\includegraphics[scale=0.42]{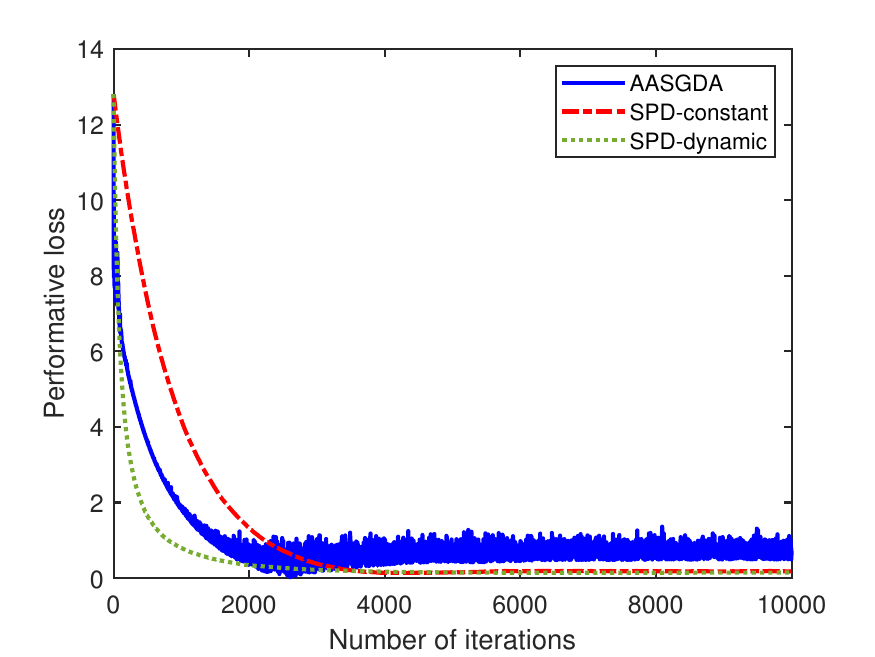}
		}
	\end{minipage}	
	\caption{Performance of AASGDA and SPD on election prediction}
	\label{fig_Election}
\end{figure}
We run the simulation with $d=10, n=10$, $A_{i}, B_{i}\in\mathbb{R}^{d\times n}, i=1,2$ are sparse parameter matrices   generated randomly. For AASGDA, the stepsizes $\eta_x= 2.7951\times 10^{-4}, \eta_y= 0.04919$, and for SPD, the constant stepsize $\eta=10^{-3}$ and the dynamic stepsize $\eta_{t}=\frac{1}{100+t}$. We record the performance of AASGDA and SPD in Figure \ref{fig_Election}, where Figure \ref{fig_Election} (a) depicts the norm of the gradient of the primal function $\Phi(\cdot)$ and Figure \ref{fig_Election} (b) depicts the norm of the performative gradient versus the number of iterations. 
As we can observe from Figure \ref{fig_Election} (a) that the dashed line corresponding to ASGDA tends to zero, while the lines corresponding to SPD with both constant and dynamic stepsizes tend to some constant bounded away from zero.
At the same time, Figure \ref{fig_Election} (b) shows that the lines corresponding to SPD with both constant and dynamic stepsizes tend to the stationary point defined by the norm of the performative gradient. This verifies that AASGDA may find the stationary point of SMDD \eqref{eq_experiment_4_3_2}, and SPD may find the performative equilibrium point of SMDD \eqref{eq_experiment_4_3_2}.

\noindent\textbf{Acknowledgment}  The research is supported by the NSFC \#12471283 and Fundamental Research Funds for the Central Universities DUT24LK001.


\appendix
\begin{appendices}
\newpage
\noindent\textbf{\Large{Appendix}}
\section{Proof of Lemmas in Section \ref{section2}}\label{Appendix_for_Section2}
\subsection{Proof of Lemma \ref{lem:expvar}}\label{Appendix_for_Section2_1_Lem2}
\begin{proof}
	By the definition of $G_x^{t}(x^t,y^t,z^t)$ in \eqref{eq:adaptive_stograd},
	\begin{equation*}\label{eq:app2_1}
		\begin{aligned}
			&\left\|\mathbb{E}_{t}\left[\frac{1}{M}\sum_{i=1}^{M}G_x^{t}(x^t,y^t,z^t_{i})\right]-\nabla_{x}\mathcal{L}(x^{t},y^{t})\right\|\\
			=&\left\|\left(\nabla_{x}\psi^{t}(x^t,y^t)-\nabla_{x}\psi(x^t,y^t)\right)^{\top}\mathbb{E}_{z\sim\mathcal{D}(x^t,y^t)}\nabla_{z}l(x^t,y^t,z) \right\|\\
			\leq& L_{1}\left\|\nabla_{x}\psi^{t}(x^t,y^t)-\nabla_{x}\psi(x^t,y^t) \right\|_{F},
		\end{aligned}
	\end{equation*}
	where the inequality follows from Assumption \ref{ass_bounded_moment}, which verifies Lemma \ref{lem:expvar} (a).
	
	By Assumption \ref{ass:l_variance_bound},
	\begin{small}
		\begin{equation*}\label{eq:app2_2}
			\begin{aligned}
				&\mathbb{E}_{t}\left\|\frac{1}{M}\sum_{i=1}^{M}G^{t}_x(x^t,y^t,z^t_{i})
				-\mathbb{E}_{t}\left[\frac{1}{M}\sum_{i=1}^{M}G_x^{t}(x^t,y^t,z^t_{i})\right] \right\|^{2}\\
				=&\frac{1}{M^{2}}\sum_{i=1}^{M}\mathbb{E}_{t}\left\|\left[\begin{array}{cc}
					I & 0 \\
					0 & (\nabla_{x}\psi^t(x^t,y^t))^{\top}
				\end{array}\right]\left(\nabla_{x, z} \ell\left(x^{t},y^{t},z_{i}^{t}\right)-\underset{z\sim \mathcal{D}\left(x^t,y^t\right)}{\mathbb{E}} \nabla_{x, z} \ell\left(x^{t},y^{t}, z\right)\right)\right\|^{2}\\
				\leq&\frac{1}{M^{2}}\left\|\left[\begin{array}{cc}
					I & 0 \\
					0 & (\nabla_{x}\psi^t(x^t,y^t))^{\top}
				\end{array}\right]\right\|_{F}^{2}\sum_{i=1}^{M}\mathbb{E}_{t}\left\|\nabla_{x, z} \ell\left(x^{t},y^{t},z_{i}^{t}\right)-\underset{z\sim \mathcal{D}\left(x^t,y^t\right)}{\mathbb{E}} \nabla_{x, z} \ell\left(x^{t},y^{t}, z\right)\right\|^{2}\\
				\leq&  \left(1+\left\|\nabla_{x}\psi^{t}(x^t,y^t)\right\|_F^{2} \right)\frac{\sigma^{2}}{M}.
			\end{aligned}
		\end{equation*}
	\end{small}Then combining with Lemma \ref{lem:expvar} (a), Lemma \ref{lem:expvar} (b) holds. 
	Similarly, we can deduce that Lemma \ref{lem:expvar} (c) and (d) hold.
	The proof is complete.
\end{proof}

\section{Proof of Lemmas in Subsection \ref{subsection3_1}}\label{Appendix_for_Section3_1}
\subsection{Proof of Lemma \ref{lem_value}}\label{Appendix_for_Section3_1_Lem1}

\begin{proof}
	\begin{equation*}\label{Lemma4_equ1}
		\Phi\left(x^{t+1}\right) \leq \Phi\left(x^t\right) + \left(x^{t+1} - x^t\right)^\top \nabla\Phi\left(x^t\right) + \kappa_y \ell \left\|x^{t+1} - x^t\right\|^2.
	\end{equation*}By the iteration of $x$ in \eqref{equ:alg_x},
	\begin{small}
		\begin{equation*}
			\begin{aligned}
				\Phi(x^{t+1}) \leq & \Phi(x^t) - \eta_{x}\left\|\nabla\Phi(x^t)\right\|^{2} + \kappa_{y}\ell\eta_{x}^{2}\left\|\frac{1}{M}\sum_{i=1}^{M}G_{x}^t(x^t,y^t,z_{i}^t)\right\|^{2} \\
				& + \eta_{x}\left(\nabla\Phi(x^t) - \frac{1}{M}\sum_{i=1}^{M}G_{x}^t(x^t,y^t,z_{i}^t)\right)^{\top}\nabla\Phi(x^t).
			\end{aligned}
		\end{equation*}
	\end{small}Taking expectation on both sides of the above inequality, conditioned on $\left(x^{t}, y^{t},\psi^{t}(\cdot)\right)$,
		\begin{equation}\label{Lemma2_equ2}
			\begin{aligned}
				&\mathbb{E}_{t}\left[\Phi(x^{t+1})\right] \\
				\leq& \Phi(x^t) - \eta_x\left\|\nabla\Phi(x^t)\right\|^2 + \eta_x\left(\nabla\Phi(x^t) - \nabla_x\mathcal{L}(x^t,y^t)\right)^\top\nabla\Phi(x^t) \\
				&+ \eta_x\left(\nabla_x\mathcal{L}(x^t,y^t) - \mathbb{E}_{t}\left[\frac{1}{M}\sum_{i=1}^M G_x^t(x^t,y^t,z_i^t)\right]\right)^\top\nabla\Phi(x^t) \\
				&+ \kappa_y\ell\eta_x^2\mathbb{E}_{t}\left\|\frac{1}{M}\sum_{i=1}^M G_x^t(x^t,y^t,z_i^t)\right\|^2,
			\end{aligned}
		\end{equation}where
	$\mathbb{E}_{t}[\,\cdot\,]=\mathbb{E}\left[\,\cdot\,|\left(x^{t}, y^{t},\psi^{t}(\cdot)\right)\right]$.
	
	By Young's inequality and Lemma \ref{lem:expvar} (a), we have that the third term
	\begin{equation*}\label{Lemma2_equ3}
		\begin{aligned}
			\left(\nabla\Phi(x^t) - \nabla_x\mathcal{L}(x^t, y^t)\right)^\top \nabla\Phi(x^t) 
			\leq \left\|\nabla\Phi(x^t) - \nabla_x\mathcal{L}(x^t, y^t)\right\|^2 + \frac{1}{4}\left\|\nabla\Phi(x^t)\right\|^2,
		\end{aligned}
	\end{equation*}
	and the fourth term
	\begin{equation*}
		\begin{aligned}
			&\left(\nabla_{x}\mathcal{L}(x^t,y^t)-\mathbb{E}_{t}\left[\frac{1}{M}\sum_{i=1}^{M}G_{x}^t(x^t,y^t,z_{i}^t)\right]\right)^{\top}\nabla\Phi(x^t) \\
			\leq& L_{1}^{2}\left\|\nabla_{x}\psi^t(x^t,y^t)-\nabla_{x}\psi(x^t,y^t)\right\|_{F}^{2}+\frac{1}{4}\left\|\nabla\Phi(x^t)\right\|^{2}.
		\end{aligned}
	\end{equation*}
	
	For the last term on the right-hand side of
	inequality \eqref{Lemma2_equ2},
	\begin{equation*}
		\begin{aligned}
			&\mathbb{E}_{t}\left\|\frac{1}{M}\sum_{i=1}^{M}G_{x}^t(x^t,y^t,z_{i}^t)\right\|^{2} \\
			\leq& 2L_{1}^{2}\left\|\nabla_{x}\psi^t(x^t,y^t)-\nabla_{x}\psi(x^t,y^t)\right\|_{F}^{2} + \frac{1}{M}\left(1+\left\|\nabla_{x}\psi^t(x^t,y^t)\right\|_{F}^{2}\right)\sigma^{2} \\
			&+ 4\left(\left\|\nabla_{x}\Phi(x^t)-\nabla_{x}\mathcal{L}(x^t,y^t)\right\|^{2} + \left\|\nabla\Phi(x^t)\right\|^{2}\right) \\
			\leq& 2L_{1}^{2}\left\|\nabla_{x}\psi^t(x^t,y^t)-\nabla_{x}\psi(x^t,y^t)\right\|_{F}^{2} + \frac{1}{M}\left(1+\left\|\nabla_{x}\psi^t(x^t,y^t)\right\|_{F}^{2}\right)\sigma^{2} \\
			&+ 4\left(\ell^{2}\left\|y^{\star}(x^t)-y^t\right\|^{2} + \left\|\nabla\Phi(x^t)\right\|^{2}\right),
		\end{aligned}
	\end{equation*}
	where the first inequality follows from Lemma~\ref{lem:expvar} (a)-(b) and the last inequality follows from $\ell$-smoothness of $\mathcal{L}(\cdot)$.
	
	\noindent Then
	\begin{equation*}
		\begin{aligned}
			\mathbb{E}_{t}\Phi(x^{t+1}) \leq & \Phi(x^t) - \left(\frac{1}{2}\eta_x - 4\kappa_y\ell\eta_x^2\right)\left\|\nabla\Phi(x^t)\right\|^2 
			+ \left(\eta_x + 4\kappa_y\ell\eta_x^2\right)\left\|\nabla\Phi(x^t) - \nabla_x L(x^t, y^t)\right\|^2 \\
			& + \left(\eta_x + 2\kappa_y\ell\eta_x^2\right)L_1^2\left\|\nabla_x\psi^t(x^t, y^t) - \nabla_x\psi(x^t, y^t)\right\|_F^2 \\
			& + \frac{1}{M}\kappa_y\ell\eta_x^2\left(1 + \left\|\nabla_x\psi^t(x^t, y^t)\right\|_F^2\right)\sigma^2.
		\end{aligned}
	\end{equation*}
	\noindent Subsequently,  we have by the fact $\eta_{x}\leq \frac{1}{16\kappa_{y}\ell}$ that
	\begin{equation*}\label{eq:lemma2_1}
		\begin{aligned}
			\mathbb{E}_{t}\Phi(x^{t+1}) \leq & \Phi(x^t) - \frac{1}{4}\eta_x \left\|\nabla\Phi(x^t)\right\|^2 + \frac{5}{4}\eta_x \ell^2 \left\|y^{\star}(x^t) - y^t\right\|^2 \\
			& + \left(\eta_x + 2\kappa_y \ell \eta_x^2\right) L_1^2 \left\|\nabla_x \psi^t(x^t, y^t) - \nabla_x \psi(x^t, y^t)\right\|_F^2 \\
			& + \frac{1}{M} \kappa_y \ell \eta_x^2 \left(1 + \left\|\nabla_x \psi^t(x^t, y^t)\right\|_F^2\right) \sigma^2.
		\end{aligned}
	\end{equation*}
	Taking expectation on both sides of the above inequality, we arrive at inequality \eqref{eq:stronglyconcave_descentPhi}. The proof is complete.
\end{proof}

\subsection{Proof of Lemma \ref{lemma_delta_recursion}}\label{Appendix_for_Section3_1_Lem2}
Before providing the proof, we establish a technical result first.
\begin{lem}[\textbf{Technical lemma}]\label{lemma_f_delta}
	Consider an optimization problem $\underset{x\in\mathcal{X}}{\min}f(x)$,
	where $f:\mathbb{R}^{n}\rightarrow\mathbb{R}$ is $L$-smooth and $\mu$-strongly convex with $\mu\in(0, L]$, and $\mathcal{X}\subset\mathbb{R}^{n}$ is a closed convex set. Denote $x^{\star}$ as the unique solution. A stochastic gradient method performs the update in each iteration as follows
	\begin{equation}\label{sto_iter}
		x^{t+1}=P_{\mathcal{X}}(x^t-\eta g^t),
	\end{equation}
	where $\eta>0$ is a fixed stepsize, and $g^t$ is an estimator of $\nabla f(x^t)$ satisfying 
	\begin{equation}\label{bias_variance_technical}
		\left\| \mathbb{E}_t [g^t] - \nabla f(x^t) \right\| \leq C_t,\, 
		\mathbb{E}_t \left\| g^t - \mathbb{E}_t [g^t] \right\|^2 \leq D_t,
	\end{equation}
	with $	C_t, D_t\geq 0$ and $\mathbb{E}_{t}[\,\cdot\,]=\mathbb{E}\left[\,\cdot\mid x_{t}\right]$ denotes the conditional expectation.
	With $\eta=\frac{1}{2(\mu+L)}$, we have 
	\begin{equation}
		\mathbb{E}_{t}\left[\|x^{t+1}-x^{\star}\|^{2}\right]\leq \left(1-\frac{\mu}{2(\mu+L)}\right)\|x^{t}-x^{\star}\|^{2}+\left(\frac{1}{2(\mu+L)^{2}}+\frac{1}{2\mu L}\right)C_t^{2}+\frac{1}{4(\mu+L)^{2}}D_t.
	\end{equation}
\end{lem}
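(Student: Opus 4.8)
The plan is to treat the projected update as a strong-convexity contraction perturbed by the bias and variance of $g^t$. First I would record the fixed-point characterization of the constrained minimizer, $x^{\star}=P_{\mathcal{X}}(x^{\star}-\eta\nabla f(x^{\star}))$, which is equivalent to the first-order optimality condition $\langle\nabla f(x^{\star}),x-x^{\star}\rangle\geq0$ for all $x\in\mathcal{X}$. Combining this with the nonexpansiveness of the Euclidean projection onto the convex set $\mathcal{X}$ and the update \eqref{sto_iter} gives $\|x^{t+1}-x^{\star}\|^{2}\leq\|(x^{t}-x^{\star})-\eta(g^{t}-\nabla f(x^{\star}))\|^{2}$. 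Taking $\mathbb{E}_{t}[\cdot]$ and using the bias--variance decomposition with $b^{t}:=\mathbb{E}_{t}[g^{t}]$ together with $\mathbb{E}_{t}\|g^{t}-b^{t}\|^{2}\leq D_t$ separates the bound into a deterministic part driven by the mean direction $b^{t}-\nabla f(x^{\star})$ plus the clean term $\eta^{2}D_t$, which already produces the coefficient $\frac{1}{4(\mu+L)^{2}}D_t$ once $\eta=\frac{1}{2(\mu+L)}$ is substituted.

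Next I would analyze the deterministic part. Writing $d:=x^{t}-x^{\star}$, $G:=\nabla f(x^{t})-\nabla f(x^{\star})$ and $e:=b^{t}-\nabla f(x^{t})$ (so $\|e\|\leq C_t$ by the bias hypothesis \eqref{bias_variance_technical}), I expand $\|d-\eta(G+e)\|^{2}=\|d\|^{2}-2\eta\langle d,G\rangle-2\eta\langle d,e\rangle+\eta^{2}\|G+e\|^{2}$ and bound each piece separately: the inner product $\langle d,G\rangle$ from below by the standard strong-convexity--smoothness estimate $\langle d,G\rangle\geq\frac{\mu L}{\mu+L}\|d\|^{2}+\frac{1}{\mu+L}\|G\|^{2}$; the quadratic term by $\|G+e\|^{2}\leq2\|G\|^{2}+2\|e\|^{2}$; and the mixed term by $-2\eta\langle d,e\rangle\leq2\eta\|d\|\,\|e\|$. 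After substituting $\eta=\frac{1}{2(\mu+L)}$, the coefficient of $\|G\|^{2}$ becomes the strictly negative quantity $2\eta^{2}-\frac{2\eta}{\mu+L}=-\frac{1}{2(\mu+L)^{2}}$, leaving a surplus that must be spent carefully.

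The crux is to allocate this leftover negative $\|G\|^{2}$ budget correctly. Using strong monotonicity $\|G\|\geq\mu\|d\|$ I convert $-\frac{1}{2(\mu+L)^{2}}\|G\|^{2}\leq-\frac{\mu^{2}}{2(\mu+L)^{2}}\|d\|^{2}$, and I apply Young's inequality to the mixed term as $2\eta\|d\|\,\|e\|\leq\alpha\|d\|^{2}+\frac{\eta^{2}}{\alpha}\|e\|^{2}$ with the sharp choice $\alpha=\frac{\mu L}{2(\mu+L)^{2}}$. Collecting the $\|d\|^{2}$ coefficients then yields $1-\frac{\mu(2L+\mu)}{2(\mu+L)^{2}}+\alpha=1-\frac{\mu}{2(\mu+L)}$, while the $\|e\|^{2}$ coefficients combine as $2\eta^{2}+\frac{\eta^{2}}{\alpha}=\frac{1}{2(\mu+L)^{2}}+\frac{1}{2\mu L}$, matching the claimed constants. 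Since $\|e\|^{2}\leq C_t^{2}$, adding back $\eta^{2}D_t$ closes the estimate.

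The main obstacle is precisely this constant bookkeeping. Three error sources — the slack in the strong-convexity--smoothness bound, the strong-monotonicity conversion of the surplus $\|G\|^{2}$, and the Young split of the bias cross term — must be balanced so that the contraction factor lands at $1-\frac{\mu}{2(\mu+L)}$ rather than the sharper $1-\frac{\mu L}{(\mu+L)^{2}}$ available in the unbiased case. The value $\alpha=\frac{\mu L}{2(\mu+L)^{2}}$ is the unique choice that simultaneously keeps the $\|d\|^{2}$ coefficient at the target level and produces the $\frac{1}{2\mu L}$ factor on $C_t^{2}$; identifying it is the one nonroutine step, and the hypothesis $\mu\leq L$ is what guarantees that the $\|G\|^{2}$ budget available for this allocation is nonnegative.
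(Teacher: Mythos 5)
Your argument is correct and is essentially the paper's own proof: nonexpansiveness of the projection at the fixed point $x^{\star}=P_{\mathcal{X}}(x^{\star}-\eta\nabla f(x^{\star}))$, the bias--variance split, the coercivity bound $\langle d,G\rangle\geq\tfrac{\mu L}{\mu+L}\|d\|^{2}+\tfrac{1}{\mu+L}\|G\|^{2}$, strong monotonicity to convert the leftover negative $\|G\|^{2}$ term, and Young's inequality on the bias cross term with exactly the same effective parameter (your $\alpha=\tfrac{\mu L}{2(\mu+L)^{2}}$ equals the paper's $\Delta\eta^{2}$ with $\Delta=2\mu L$, and $\eta^{2}/\alpha=1/\Delta=1/(2\mu L)$). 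The only quibble is your closing remark that $\mu\leq L$ is needed to make the $\|G\|^{2}$ budget nonnegative --- the coefficient $\tfrac{1}{2(\mu+L)^{2}}$ is positive unconditionally --- but this is commentary and does not affect the proof.
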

\begin{proof}
	By the iteration \eqref{sto_iter},
	\begin{equation*}
		\begin{aligned}
			&\left\|x^{t+1}-x^{\star}\right\|^{2}\\
			=&\left\|P_{\mathcal{X}}\left(x^{t}-\eta g^{t}\right)-P_{\mathcal{X}}\left(x^{\star}-\eta\nabla f(x^{\star})\right)\right\|^{2} \\
			\leq&\left\|x^{t}-x^{\star}\right\|^{2}-2\eta\left\langle g^{t}-\nabla f(x^{\star}),x^{t}-x^{\star}\right\rangle+\eta^{2}\left\|g^{t}-\nabla f(x^{\star})\right\|^{2}.
		\end{aligned}
	\end{equation*}	
	
	\noindent Taking expectation on both sides of the above inequality, conditioned on $x^t$,
	\begin{small}
		\begin{equation}\label{eq:lem_delta_1}
			\begin{aligned}
				\mathbb{E}_t \left[ \left\| x^{t+1} - x^{\star} \right\|^2 \right]\leq &\left\|x^{t}-x^{\star}\right\|^{2}
				- 2\eta \mathbb{E}_t \left[\left\langle g^t - \nabla f(x^t), x^t - x^{\star} \right\rangle\right]\\
				&- 2\eta  \left\langle \nabla f(x^t) - \nabla f(x^{\star}), x^t - x^{\star} \right\rangle  \\
				&+\eta^2\mathbb{E}_t \left\| g^t - \mathbb{E}_t [g^t] \right\|^2 + \eta^2\left\| \mathbb{E}_t [g^t]- \nabla f(x^{\star}) \right\|^2.
			\end{aligned}
		\end{equation}
	\end{small}For the second term on the right-hand side of inequality \eqref{eq:lem_delta_1}, we have by Young's inequality that
	\begin{equation*}
		\begin{aligned}
			-2 \eta \mathbb{E}_{t}\left[\left\langle g^{t}-\nabla f\left(x^{t}\right), x^{t}-x^{\star}\right\rangle\right]\leq \frac{1}{\Delta}\|\mathbb{E}_{t}[g^t]-\nabla f(x^t)\|^{2}+\Delta\eta^{2}\|x^t-x^{\star}\|^{2},\;\;\forall \Delta>0.
		\end{aligned}
	\end{equation*}
	For the third term on the right-hand side of inequality \eqref{eq:lem_delta_1}, we have by \cite[Proposition 6.1.9 (b)]{bertsekas2015convex} that
	\begin{equation*}
		\left( \nabla f(x^t) - \nabla f(x^{\star}) \right)^{\top} (x^t - x^{\star}) \geq \frac{\mu L}{\mu + L} \|x^t - x^{\star}\|^2 + \frac{1}{\mu + L} \|\nabla f(x^t) - \nabla f(x^{\star})\|^2.
	\end{equation*}
	For the last term on the right-hand side of inequality \eqref{eq:lem_delta_1}, 
	\begin{equation*}
		\begin{aligned}
			\eta^{2}\left\|\mathbb{E}_{t}\left[g^{t}\right]-\nabla f\left(x^{\star}\right)\right\|^{2}
			\leq&2\eta^{2}\|\mathbb{E}_{t}[g^t]-\nabla f(x^t)\|^{2}+2\eta^{2}\|\nabla f(x^t)-\nabla f(x^{\star})\|^{2}.
		\end{aligned}
	\end{equation*}	
	Then, 
	\begin{equation*}
		\begin{aligned}
			\mathbb{E}_t \left[ \left\| x^{t+1} - x^{\star} \right\|^2 \right]\leq &\left\|x^{t}-x^{\star}\right\|^{2}-2\eta \frac{\mu L}{\mu + L} \|x^t - x^{\star}\|^2+\Delta\eta^{2}\|x^t-x^{\star}\|^{2}\\
			&+\left(2\eta^{2}-2\eta \frac{1}{\mu + L}\right)\|\nabla f(x^t) - \nabla f(x^{\star})\|^2\\
			&+\left(2\eta^{2}+\frac{1}{\Delta}\right)\|\mathbb{E}_{t}[g^t]-\nabla f(x^t)\|^{2}+\eta^{2}\mathbb{E}_{t}\|g^{t}-\mathbb{E}_{t}[g^t]\|^{2}.
		\end{aligned}
	\end{equation*}
	Given $\eta=\frac{1}{2(\mu+L)}$ and $\Delta=2\mu L$, we have
	\begin{equation*}
		\begin{aligned}
			\mathbb{E}_{t}\left[\|x^{t+1}-x^{\star}\|^{2}\right]
			\leq&\left(1-\frac{\mu}{2(\mu+L)}\right)\|x^{t}-x^{\star}\|^{2}+\frac{1}{4(\mu+L)^{2}}\mathbb{E}_{t}\|g^{t}-\mathbb{E}_{t}[g^t]\|^{2}\\
			&+\left(\frac{1}{2(\mu+L)^{2}}+\frac{1}{2\mu L}\right)\|\mathbb{E}_{t}[g^t]-\nabla f(x^t)\|^{2}\\
			\leq& \left(1-\frac{\mu}{2(\mu+L)}\right)\|x^{t}-x^{\star}\|^{2}+\frac{1}{4(\mu+L)^{2}}D_t+\left(\frac{1}{2(\mu+L)^{2}}+\frac{1}{2\mu L}\right)C_t^{2},
		\end{aligned}
	\end{equation*}
	where the first inequality follows from $\mu$-strong convexity of $f(\cdot)$ and the second inequality follows from inequality \eqref{bias_variance_technical}. The proof is complete.
\end{proof}

\begin{proof}
	By the definition of $\delta_t$ and Young's inequality,
	\begin{small}
		\begin{equation}\label{eq:recursion_1}
			\begin{aligned}
				\delta_{t}=&\mathbb{E}\Vert y^{\star}(x^{t})-y^{t}\Vert^{2}\\
				\leq& (1+\Delta)\mathbb{E}\left[\left\|y^{\star}\left(x^{t}\right)-y^{\star}\left(x^{t-1}\right)\right\|^{2}\right]+(1+\frac{1}{\Delta})\mathbb{E}\left[\left\|y^{\star}(x^{t-1})-y^t \right\|^2\right],\forall \Delta>0.
			\end{aligned}
		\end{equation}
	\end{small}
	
	For the first term on the right-hand side of inequality \eqref{eq:recursion_1},
	\begin{equation*}
		\begin{aligned}
			&\mathbb{E}\left[\left\|y^{\star}\left(x^{t}\right)-y^{\star}\left(x^{t-1}\right)\right\|^{2}\right]\\
			\leq&\kappa_{y}^{2}\mathbb{E}\left[\left\|x^{t}-x^{t-1}\right\|^2\right]\\
			\leq&
			2L_1^2\kappa_{y}^{2}\eta_{x}^{2}
			\mathbb{E}\Vert\nabla_{x}\psi^{t-1}(x^{t-1},y^{t-1})-\nabla_{x}\psi(x^{t-1},y^{t-1}) \Vert^2_{F}+2\kappa_{y}^{2}\eta_{x}^{2}\mathbb{E}\left\|\nabla_{x}\mathcal{L}(x^{t-1},y^{t-1})\right\|^2\\
			&+\kappa_{y}^{2}\eta_{x}^{2}\mathbb{E}\left(1+\left\|\nabla_{x}\psi^{t-1}(x^{t-1},y^{t-1})\right\|_F^{2} \right)\frac{\sigma^{2}}{M}\\
			\leq&2L_1^2\kappa_{y}^{2}\eta_{x}^{2}
			\mathbb{E}\Vert\nabla_{x}\psi^{t-1}(x^{t-1},y^{t-1})-\nabla_{x}\psi(x^{t-1},y^{t-1}) \Vert^2_{F}+4\ell^2\kappa_{y}^{2}\eta_{x}^{2}\delta_{t-1}+4\kappa_{y}^{2}\eta_{x}^{2}\mathbb{E}\Vert\nabla\Phi(x^{t-1})\Vert^2\\
			&+\kappa_{y}^{2}\eta_{x}^{2}\mathbb{E}\left(1+\left\|\nabla_{x}\psi^{t-1}(x^{t-1},y^{t-1})\right\|_F^{2} \right)\frac{\sigma^{2}}{M},
		\end{aligned}
	\end{equation*}
	where the first inequality follows from $\kappa_{y}$-Lipschitz continuity of $y^{\star}(\cdot)$ \cite[Lemma 4.3]{lin2020gradient}, 
	the second inequality follows from Lemma \ref{lem:expvar} (a)-(b) and the third inequality follows from $\ell$-smoothness of $\mathcal{L}(\cdot)$.
	
	Noting that the conditions of Lemma \ref{lemma_f_delta} holds, 
	we have by Lemma \ref{lem:expvar} and Lemma \ref{lemma_f_delta} that the iterates of $y$ obtained by the iteration step \eqref{equ:alg_y} with the stepsize $\eta_{y}=\frac{1}{2(\ell+\mu)}$ satisfy
	\begin{equation*}
		\begin{aligned}
			\mathbb{E}\left[\left\|y^{\star}(x^{t-1})-y^{t}\right\|^{2} \right]\leq& \left(1-\frac{1}{2(1+\kappa_{y})}\right)\delta_{t-1}+\frac{1}{4(\mu+\ell)^{2}}\left(1+\mathbb{E}\left\|\nabla_{y}\psi^{t-1}(x^{t-1},y^{t-1})\right\|_F^{2} \right)\frac{\sigma^{2}}{M}\\
			&+\left(\frac{1}{2(\mu+\ell)^{2}}+\frac{1}{2\mu \ell}\right)L_{1}^{2}\mathbb{E}\left\|\nabla_{y}\psi^{t-1}(x^{t-1},y^{t-1})-\nabla_{y}\psi(x^{t-1},y^{t-1}) \right\|_{F}^{2}.
		\end{aligned}
	\end{equation*}
	
	Setting $\Delta=4\kappa_{y}+3$ and summarizing the above inequalities,   we arrive at inequality \eqref{eq:f_delta}. 
	The proof is complete.
\end{proof}	

\section{Proof of  Lemmas in Subsection \ref{subsection3_2}}\label{Appendix_for_Section3_2}
\subsection{Proof of Lemma \ref{lemma_concave_1}}\label{Appendix_for_Section3_2_Lem1}
\begin{proof}
Denote $\hat{x}^{t-1}=\operatorname{prox}_{\Phi/2\ell}(x^{t-1})$. By the definition of $\Phi_{1 / 2 \ell}\left(\cdot\right)$ and the iteration of $x$ in \eqref{equ:alg_x}, we have
	\begin{equation*}
		\begin{aligned}
			\Phi_{1 / 2 \ell}\left(x^{t}\right) \leq & \Phi_{1 / 2 \ell}\left(x^{t-1}\right) +2\eta_{x}\ell \left\langle\hat{x}^{t-1}-x^{t-1}, \frac{1}{M} \sum_{i=1}^{M} G_{x}^{t-1}\left(x^{t-1}, y^{t-1}, z_{i}^{t-1}\right)\right\rangle\\
			&+\eta_{x}^{2}\ell\left\|\frac{1}{M} \sum_{i=1}^{M} G_{x}^{t-1}\left(x^{t-1}, y^{t-1}, z_{i}^{t-1}\right)\right\|^2.
		\end{aligned}
	\end{equation*}

	\noindent Taking expectation on both sides of the above inequality, conditioned on $\left(x^{t-1},y^{t-1},\psi^{t-1}(\cdot)\right)$,
	\begin{equation*}
		\begin{aligned}
			\mathbb{E}_{t-1}\left[\Phi_{1 / 2 \ell}\left(x^{t}\right)\right]
			\leq & 
			2\eta_{x}\ell \mathbb{E}_{t-1}\left\langle\hat{x}^{t-1}-x^{t-1}, \frac{1}{M} \sum_{i=1}^{M} G_{x}^{t-1}\left(x^{t-1}, y^{t-1}, z_{i}^{t-1}\right)\right\rangle\\
			&+ \Phi_{1 / 2 \ell}\left(x^{t-1}\right)+\eta_{x}^{2}\ell\left\|\mathbb{E}_{t-1}\left[\frac{1}{M}\sum_{i=1}^{M} G_{x}^{t-1}\left(x^{t-1}, y^{t-1}, z_{i}^{t-1}\right)\right]\right\|^{2}\\
			&+\eta_{x}^{2}\ell\left(1+\left\|\nabla_{x}\psi^{t-1}(x^{t-1},y^{t-1})\right\|_F^{2} \right)\frac{\sigma^{2}}{M},
		\end{aligned}
	\end{equation*}
	where $\mathbb{E}_{t-1}\left[\,\cdot\,\right]=\mathbb{E}\left[\,\cdot\,| (x^{t-1},y^{t-1},\psi^{t-1}(\cdot))\right]$ and the inequality follows from Lemma \ref{lem:expvar} (b).
	
	\noindent Then
	\begin{small}
		\begin{equation}\label{eq:concave_Lemma1_cex}
			\begin{aligned}
				&\mathbb{E}_{t-1}\left[\Phi_{1 / 2 \ell}\left(x^{t}\right)\right]\\
				\leq&\Phi_{1 / 2 \ell}\left(x^{t-1}\right)+2 \eta_{x}\ell \left\langle\hat{x}^{t-1}-x^{t-1}, \nabla_{x} \mathcal{L}\left(x^{t-1}, y^{t-1}\right)\right\rangle\\
				&+2 \eta_{x}\ell \mathbb{E}_{t-1}\left\langle\hat{x}^{t-1}-x^{t-1}, \frac{1}{M} \sum_{i=1}^{M} G_{x}^{t-1}\left(x^{t-1}, y^{t-1}, z_{i}^{t-1}\right)-\nabla_{x} \mathcal{L}\left(x^{t-1}, y^{t-1}\right)\right\rangle \\
				&+\eta_{x}^{2}\ell\left\|\mathbb{E}_{t-1}\left[\frac{1}{M} \sum_{i=1}^{M} G_{x}^{t-1}\left(x^{t-1}, y^{t-1}, z_{i}^{t-1}\right)\right]\right\|^{2}
				+\eta_{x}^{2}\ell\left(1+\left\|\nabla_{x}\psi^{t-1}(x^{t-1},y^{t-1})\right\|_{F}^{2}\right) \frac{\sigma^{2}}{M}.
			\end{aligned}
		\end{equation}
	\end{small}
	
	\noindent For the second term on the right-hand side of inequality \eqref{eq:concave_Lemma1_cex},
	\begin{equation*}\label{equ_concave_4}
		\begin{aligned}
			&2 \eta_{x}\ell\left\langle\hat{x}^{t-1}-x^{t-1}, \nabla_{x} \mathcal{L}\left(x^{t-1}, y^{t-1}\right)\right\rangle \\
			\leq &2 \eta_{x}\ell\left(\mathcal{L}\left(\hat{x}^{t-1}, y^{t-1}\right)-\mathcal{L}\left(x^{t-1}, y^{t-1}\right)+\frac{\ell}{2}\left\|\hat{x}^{t-1}-x^{t-1}\right\|^{2}\right)\\
			\leq& 2 \eta_{x}\ell\left(\Phi(x^{t-1})-\mathcal{L}\left(x^{t-1}, y^{t-1}\right)\right)-\eta_{x}\ell^{2}\left\|\hat{x}^{t-1}-x^{t-1}\right\|^{2},
		\end{aligned}
	\end{equation*}where the first inequality follows from $\ell$-smoothness of $\mathcal{L}(\cdot)$ and the second inequality follows from the  definition of $\hat{x}^{t-1}$.
	
	For the third term on the right-hand side of inequality \eqref{eq:concave_Lemma1_cex},
	\begin{small}
		\begin{equation*}
			\begin{aligned}
				&2 \eta_{x}\ell\mathbb{E}_{t-1}\left\langle\hat{x}^{t-1}-x^{t-1}, \frac{1}{M} \sum_{i=1}^{M} G_{x}^{t-1}\left(x^{t-1}, y^{t-1}, z_{i}^{t-1}\right)-\nabla_{x} \mathcal{L}\left(x^{t-1}, y^{t-1}\right)\right\rangle\\
				\leq& \eta_{x}\ell \bar\Delta\left\|\hat{x}^{t-1}-x^{t-1}\right\|^{2}+\frac{ \eta_{x}\ell}{\bar\Delta}L_{1}^{2}\left\|\nabla_{x}\psi^{t-1}(x^{t-1},y^{t-1})-\nabla_{x}\psi(x^{t-1},y^{t-1})\right\|_{F}^{2}, \,\forall \bar\Delta>0,
			\end{aligned}
		\end{equation*}
	\end{small}where the inequality follows from Young's inequality and the bias bound of $\frac{1}{M} \sum_{i=1}^{M} G_{x}^{t-1}\left(x^{t-1}, y^{t-1}, z_{i}^{t-1}\right)$ in Lemma \ref{lem:expvar} (a).
	
	For the fourth term on the right-hand side of inequality \eqref{eq:concave_Lemma1_cex},
	\begin{small}
		\begin{equation*}
			\begin{aligned}
				&\eta_{x}^{2}\ell\left\|\mathbb{E}_{t-1}\left[\frac{1}{M} \sum_{i=1}^{M} G_{x}^{t-1}\left(x^{t-1}, y^{t-1}, z_{i}^{t-1}\right)\right]\right\|^{2}\\
				=&\eta_{x}^{2}\ell\left\|\mathbb{E}_{t-1}\left[\frac{1}{M} \sum_{i=1}^{M} G_{x}^{t-1}\left(x^{t-1}, y^{t-1}, z_{i}^{t-1}\right)\right]-\nabla_{x}\mathcal{L}(x^{t-1},y^{t-1})+\nabla_{x}\mathcal{L}(x^{t-1},y^{t-1})\right\|^{2}\\
				\leq&2\eta_{x}^{2}\ell\left\|\mathbb{E}_{t-1}\left[\frac{1}{M} \sum_{i=1}^{M} G_{x}^{t-1}\left(x^{t-1}, y^{t-1}, z_{i}^{t-1}\right)\right]-\nabla_{x}\mathcal{L}(x^{t-1},y^{t-1})\right\|^{2}\\
				&+2\eta_{x}^{2}\ell\left\|\nabla_{x}\mathcal{L}(x^{t-1},y^{t-1}) \right\|^{2} \\
				\leq& 2L_{1}^{2}\eta_{x}^{2}\ell\left\|\nabla_{x}\psi^{t-1}(x^{t-1},y^{t-1})-\nabla_{x}\psi(x^{t-1},y^{t-1}) \right\|_{F}^{2}+2L^{2}\eta_{x}^{2}\ell,
			\end{aligned}
		\end{equation*}
	\end{small}where the first inequality follows from the fact $\|a+b\|^{2}\leq 2\|a\|^{2}+2\|b\|^{2}$ and the second inequality follows from Lemma  \ref{lem:expvar} (a) and $L$-Lipschitz continuity of $\mathcal{L}(\cdot,y)$.
	
	Then
	\begin{equation*}
		\begin{aligned}
			\mathbb{E}_{t-1}\left[\Phi_{1 / 2 \ell}\left(x^{t}\right)\right] & \leq\Phi_{1 / 2 \ell}\left(x^{t-1}\right)+2 \eta_{x}\ell\left(\Phi(x^{t-1})-\mathcal{L}\left(x^{t-1}, y^{t-1}\right)\right)\\
			&-\eta_{x}\ell(\ell-\bar\Delta)\left\|\hat{x}^{t-1}-x^{t-1}\right\|^{2}\\
			&+\left(\frac{\eta_{x}}{\bar\Delta}+2 \eta_{x}^{2}\right)\ell L_{1}^{2}\left\|\nabla_{x}\psi^{t-1}(x^{t-1},y^{t-1})-\nabla_{x}\psi(x^{t-1},y^{t-1})\right\|_{F}^{2}\\
			&+2 \eta_{x}^{2}\ell L^{2}+\eta_{x}^{2}\ell\left(1+\left\|\nabla_{x}\psi^{t-1}(x^{t-1},y^{t-1})\right\|_{F}^{2}\right) \frac{\sigma^{2}}{M}.
		\end{aligned}
	\end{equation*}  
	
	\noindent By the definition of $\Delta_t$, and
	taking expectation on both sides of the above inequality, we have
	\begin{equation*}
		\begin{aligned}
			\mathbb{E}\left[\Phi_{1 / 2 \ell}\left(x^{t}\right)\right] & \leq\mathbb{E}\left[\Phi_{1 / 2 \ell}\left(x^{t-1}\right)\right]+2 \eta_{x}\ell\Delta_{t-1}-\eta_{x}\ell(\ell-\bar\Delta)\mathbb{E}\left\|\hat{x}^{t-1}-x^{t-1}\right\|^{2} \\
			& +\left(\frac{\eta_{x}}{\bar\Delta}+2 \eta_{x}^{2}\right)\ell L_{1}^{2}\mathbb{E}\left\|\nabla_{x}\psi^{t-1}(x^{t-1},y^{t-1})-\nabla_{x}\psi(x^{t-1},y^{t-1})\right\|_{F}^{2}+2 \eta_{x}^{2}\ell L^{2}\\
			&+\eta_{x}^{2}\ell\mathbb{E}\left(1+\left\|\nabla_{x}\psi^{t-1}(x^{t-1},y^{t-1})\right\|_{F}^{2}\right) \frac{\sigma^{2}}{M}.
		\end{aligned}
	\end{equation*}
	
	\noindent Setting $\bar\Delta=\ell/2$ in the above inequality, the desired inequality \eqref{eq:concave_descentPhi} follows directly from the fact that $\left\|\hat{x}^{t-1}-x^{t-1}\right\|=\frac{\left\|\nabla\Phi_{1/2\ell}\left(x^{t-1}\right)\right\|}{2\ell}$ \cite[Theorem 6.60]{beck2017first}. The proof is complete.
\end{proof}

\subsection{Proof of Lemma \ref{lemma_concave_Delta}}\label{Appendix_for_Section3_2_Lem2}
\begin{proof}
	Let $0\leq s\leq t$ be any integer. By the definition of $\Delta_t$,
	\begin{equation*}\label{eq:concave_Delta_core}
		\begin{aligned}
			\Delta_{t}
			\leq & \underbrace{\mathbb{E}\left[\mathcal{L}\left(x^{t}, y^{\star}\left(x^{t}\right)\right)-\mathcal{L}(x^{s},y^{\star}(x^{t}))\right]}_{I_1}\\
			&+\underbrace{\mathbb{E}\left[\mathcal{L}(x^{s},y^{\star}(x^{s}))-\mathcal{L}\left(x^{t}, y^{\star}\left(x^{s}\right)\right)\right]}_{I_2}+\underbrace{\mathbb{E}\left[\mathcal{L}\left(x^{t}, y^{\star}\left(x^{s}\right)\right)-\mathcal{L}\left(x^{t}, y^{t+1}\right)\right]}_{I_3}\\
			&+\underbrace{\mathbb{E}\left[\left(\mathcal{L}\left(x^{t}, y^{t+1}\right)-\mathcal{L}\left(x^{t+1}, y^{t+1}\right)\right)\right]}_{I_4}+\mathbb{E}\left[\left(\mathcal{L}\left(x^{t+1}, y^{t+1}\right)-\mathcal{L}\left(x^{t}, y^{t}\right)\right)\right].
		\end{aligned}
	\end{equation*}
	
	For the term $I_{1}$,
	\begin{equation*}\label{eq:concave_Delta_1}
		\begin{aligned}
			I_{1}\leq&L  \mathbb{E}\left\|x^{t}-x^{s} \right\|\\
			\leq& L \eta_{x}\sum_{j=s}^{t-1}\mathbb{E}\left\| \frac{1}{M}\sum_{i=1}^{M}G_x^{j}(x^{j},y^{j},z^{j}_{i})\right\|\\
			\leq&\frac{L \eta_{x}}{M}\sum_{j=s}^{t-1}\sum_{i=1}^{M}\mathbb{E}\left[\left\|\left(\nabla_{x}\psi^j(x^j, y^j)-\nabla_{x}\psi(x^j, y^j)\right)^{\top}\nabla_{z}l(x^j,y^j,z^j_i)\right\|\right]\\
			&+\frac{L \eta_{x}}{M}\sum_{j=s}^{t-1}\sum_{i=1}^{M}\mathbb{E}\left[\left\|\nabla_{x}l(x^j,y^j,z^j_i)+\left(\nabla_{x}\psi(x^j, y^j)\right)^{\top}\nabla_{z}l(x^j,y^j,z^j_i) \right\|\right]\\
			\leq&\frac{L \eta_{x}}{M}\sum_{j=s}^{t-1}\sum_{i=1}^{M}\mathbb{E}\left[\frac{1}{4}\left\|\nabla_{x}\psi^j(x^j,y^j)-\nabla_{x}\psi(x^j, y^j)\right\|_F^2+
			\left(\mathbb{E}_{t}\left\|\nabla_{z}l(x^j,y^j,z^j_i)\right\|\right)^2\right]\\
			&+\frac{L \eta_{x}}{M}\sum_{j=s}^{t-1}\sum_{i=1}^{M}\mathbb{E}\left[\left\|\nabla_{x}l(x^j,y^j,z^j_i)\right\|+\left\|\nabla_{x}\psi(x^j, y^j) \right\|_{F}\left\|\nabla_{z}l(x^j,y^j,z^j_i) \right\| \right]\\
			\leq&\frac{1}{4}L \eta_{x}
			\sum_{j=s}^{t-1}\mathbb{E}\left\|\nabla_{x}\psi^j(x^j,y^j)-\nabla_{x}\psi(x^j,y^j)\right\|_F^2 +LL_{1}\eta_{x}(1+L_{0}+L_{1})(t-s),
		\end{aligned}
	\end{equation*}
	where the first inequality follows from $L$-Lipschitz continuity of $\mathcal{L}(\cdot,y)$, the second inequality follows from triangle inequality and the definition of $G_x(x^{j},y^{j},z^{j}_{i})$ in \eqref{equ:stograd_unbia_x} and $G_x^{j}(x^{j},y^{j},z^{j}_{i})$ in \eqref{eq:adaptive_stograd}, the third inequality follows from Young's inequality along with triangle inequality and Cauchy-Schwarz inequality, the last inequality follows from Assumption \ref{ass_bounded_moment} and $L_{0}$-Lipschitz continuity of $\psi(\cdot)$.
	
	By a similar analysis
	\begin{align*}
		I_{2}
		&\leq  \frac{1}{4}L\eta_{x}\sum_{j=s}^{t-1}\mathbb{E}\left\|\nabla_{x}\psi^j(x^j,y^j)-\nabla\psi_{x}(x^j,y^j)\right\|_F^2
		+LL_{1}(1+L_{0}+L_{1})\eta_{x}(t-s),\\
		I_{4}
		&\leq\frac{1}{4}
		L\eta_{x}\mathbb{E}\left\|\nabla_{x}\psi^{t}(x^{t},y^{t})-\nabla_{x}\psi(x^{t},y^{t})\right\|_F^2+LL_{1}(1+L_{0}+L_{1})\eta_{x}.
	\end{align*}
	
	Next, we provide an upper bound for $I_3$. Given $y^{t+1}=\operatorname{proj}_{\mathcal{Y}}\left(y^{t}+\eta_{y}\frac{1}{M}\sum_{i=1}^{M}G_{y}^{t}(x^{t}, y^{t}, z^{t}_{i})\right)$ and $\mathcal{Y}$ is a closed convex set,
	\begin{equation*}
		\begin{aligned}
			\left(y-y^{t+1}\right)^{\top}\left(y^{t+1}-y^{t}-\eta_{y}\frac{1}{M}\sum_{i=1}^{M}G_{y}^{t} \left(x^{t}, y^{t},z^{t}_{i}\right)\right) \geq 0,
		\end{aligned}
	\end{equation*}
	which implies
	\begin{equation*}
		\begin{aligned}
			\left\|y-y^{t+1}\right\|^{2} \leq &\left\|y-y^{t}\right\|^{2}+2\eta_{y}(y^{t}-y)^{\top}\left[ \frac{1}{M}\sum_{i=1}^{M}G_{y}^{t} \left(x^{t}, y^{t},z^{t}_{i}\right)\right]\\
			&+2\eta_{y}(y^{t+1}-y^{t})^{\top}\left[ \frac{1}{M}\sum_{i=1}^{M}G_{y}^{t} \left(x^{t}, y^{t},z^{t}_{i}\right)\right]-\left\|y^{t+1}-y^{t}\right\|^{2}\\
			=&\left\|y-y^{t}\right\|^{2}+2\eta_{y}(y^{t}-y)^{\top}\left(\frac{1}{M}\sum_{i=1}^{M}G_{y}\left(x^{t}, y^{t},z^{t}_{i}\right)\right)\\
			&+2\eta_{y}(y^{t}-y)^{\top}\left[ \frac{1}{M}\sum_{i=1}^{M}G_{y}^{t} \left(x^{t}, y^{t},z^{t}_{i}\right)-\frac{1}{M}\sum_{i=1}^{M}G_{y}\left(x^{t}, y^{t},z^{t}_{i}\right)\right]\\
			&+2\eta_{y}(y^{t+1}-y^{t})^{\top}\left[ \frac{1}{M}\sum_{i=1}^{M}G_{y}^{t} \left(x^{t}, y^{t},z^{t}_{i}\right)-\nabla_{y}\mathcal{L}(x^{t},y^{t})\right]\\
			&+2\eta_{y}(y^{t+1}-y^{t})^{\top}\nabla_{y}\mathcal{L}(x^{t},y^{t})-\|y^{t+1}-y^{t}\|^{2}\\
			\leq&\left\|y-y^{t}\right\|^{2}+2\eta_{y}(y^{t}-y)^{\top}\left(\frac{1}{M}\sum_{i=1}^{M}G_{y}\left(x^{t}, y^{t},z^{t}_{i}\right)\right)\\
			&+2\eta_{y}\|y^{t}-y\|\left\|\nabla_{y}\psi^{t}(x^{t},y^{t})-\nabla_{y}\psi(x^{t},y^{t})\right\|_{F} \left\|\frac{1}{M}\sum_{i=1}^{M}\nabla_{z}l(x^{t},y^{t},z_i^{t})\right\|\\
			&+\|y^{t+1}-y^{t}\|^{2}+\eta_{y}^{2}\left\| \frac{1}{M}\sum_{i=1}^{M}G_{y}^{t} \left(x^{t}, y^{t},z^{t}_{i}\right)-\nabla_{y}\mathcal{L}(x^{t},y^{t})\right\|^{2}\\
			&+2\eta_{y}(y^{t+1}-y^{t})^{\top}\nabla_{y}\mathcal{L}(x^{t},y^{t})-\|y^{t+1}-y^{t}\|^{2}
		\end{aligned}
	\end{equation*}where the second inequality follows from Cauchy-Schwarz inequality and Young's inequality. 
	
	\noindent Taking expectation on both sides of the above inequality, conditioned on $(x^{t},y^{t},\psi^{t}(\cdot))$,
	\begin{equation}\label{eq:NCC_lem2_1}
		\begin{aligned}
			&\mathbb{E}_{t}\left\|y-y^{t+1}\right\|^{2}\\
			\leq&\|y-y^{t}\|^{2}+2\eta_{y}\left(\mathcal{L}(x^{t},y^{t})-\mathcal{L}(x^{t},y) \right)\\
			&+2\eta_{y}L_{1}\|y^{t}-y\|\left\|\nabla_{y}\psi^{t}(x^{t},y^{t})-\nabla_{y}\psi(x^{t},y^{t})\right\|_{F} \\
			&+\eta_{y}^{2}\underbrace{\mathbb{E}_{t}\left\| \frac{1}{M}\sum_{i=1}^{M}G_{y}^{t} \left(x^{t}, y^{t},z^{t}_{i}\right)-\nabla_{y}\mathcal{L}(x^{t},y^{t})\right\|^{2}}_{I_5}+2\eta_{y}\left(\mathcal{L}(x^{t},y^{t+1})-\mathcal{L}(x^{t},y^{t}) \right),		
		\end{aligned}
	\end{equation}
	where the inequality follows from concavity and $\ell$-smoothness of $\mathcal{L}(\cdot)$ along with Assumption~\ref{ass_bounded_moment}. 
	
	\noindent Note that 
	\begin{equation*}
		\begin{aligned}
			I_{5}=&\mathbb{E}_{t}\left\|\frac{1}{M}\sum_{i=1}^{M}G_{y}^{t} \left(x^{t}, y^{t},z^{t}_{i}\right)-\mathbb{E}_{t}\frac{1}{M}\sum_{i=1}^{M}G_{y}^{t} \left(x^{t}, y^{t},z^{t}_{i}\right)\right\|^{2}\\
			&+\left\|\mathbb{E}_{t}\frac{1}{M}\sum_{i=1}^{M}G_{y}^{t} \left(x^{t}, y^{t},z^{t}_{i}\right)-\nabla_{y}\mathcal{L}(x^{t},y^{t})\right\|^{2}\\
			\leq&\left(1+\left\|\nabla_{y}\psi^{t}(x^{t},y^{t})\right\|_F^{2} \right)\frac{\sigma^{2}}{M}+L_{1}^{2}\left\|\nabla_{y}\psi^{t}(x^{t},y^{t})-\nabla_{y}\psi(x^{t},y^{t}) \right\|_{F}^{2},
		\end{aligned}
	\end{equation*}where the inequality follows from Lemma \ref{lem:expvar} (c)-(d).
	Taking expectation on both sides of inequality \eqref{eq:NCC_lem2_1}, we have
	\begin{equation*}
		\begin{aligned}
			\mathbb{E}\left\|y-y^{t+1}\right\|^{2}
			\leq&\mathbb{E}\|y-y^{t}\|^{2}+2\eta_{y}\mathbb{E}\left[\mathcal{L}(x^{t},y^{t+1})-\mathcal{L}(x^{t},y) \right]\\
			&+2\eta_{y}L_{1}\mathbb{E}\|y^{t}-y\|\left\|\nabla_{y}\psi^{t}(x^{t},y^{t})-\nabla_{y}\psi(x^{t},y^{t}) \right\|_{F} \\
			&+\eta_{y}^{2}\mathbb{E}\left(1+\left\|\nabla_{y}\psi^{t}(x^{t},y^{t})\right\|_F^{2} \right)\frac{\sigma^{2}}{M}\\
			&+\eta_{y}^{2}L_{1}^{2}\mathbb{E}\left\|\nabla_{y}\psi^{t}(x^{t},y^{t})-\nabla_{y}\psi(x^{t},y^{t}) \right\|_{F}^{2}.	
		\end{aligned}
	\end{equation*}
	Setting $y=y^{\star}\left(x^{s}\right)$ in the above inequality and rearranging the terms,
	\begin{equation*}
		\begin{aligned}
			I_3\leq& \frac{1}{2 \eta_{y}}\left(\mathbb{E}\left[\left\|y^{t}-y^{\star}\left(x^{s}\right)\right\|^{2}\right]-\mathbb{E}\left[\left\|y^{t+1}-y^{\star}\left(x^{s}\right)\right\|^{2}\right]\right)\\
			&+L_{1}\mathbb{E}\|y^{t}-y^{\star}\left(x^{s}\right)\|\left\|\nabla_{y}\psi^{t}(x^{t},y^{t})-\nabla_{y}\psi(x^{t},y^{t}) \right\|_{F} \\
			&+\frac{\eta_{y}}{2}\mathbb{E}\left(1+\left\|\nabla_{y}\psi^{t}(x^{t},y^{t})\right\|_F^{2} \right)\frac{\sigma^{2}}{M}\\
			&+\frac{\eta_{y}}{2}L_{1}^{2}\mathbb{E}\left\|\nabla_{y}\psi^{t}(x^{t},y^{t})-\nabla_{y}\psi(x^{t},y^{t}) \right\|_{F}^{2}.
		\end{aligned}
	\end{equation*}
	
	Then, 
		\begin{equation}\label{eq:concave_Delta_upperbound}
			\begin{aligned}
				\Delta_t \leq & \eta_x L L_1 (1 + L_0 + L_1) (2t - 2s +1) + \frac{1}{2} L \eta_x \sum_{j=s}^{t-1} \mathbb{E} \left\| \nabla_x \psi^j \left( x^j, y^j \right) - \nabla_x \psi \left( x^j, y^j \right) \right\|_F^2 \\
				& + \frac{1}{4} L \eta_x \mathbb{E} \left\| \nabla_x \psi^t \left( x^t, y^t \right) - \nabla_x \psi \left( x^t, y^t \right) \right\|_F^2 \\
				& + \mathbb{E} \left[ \mathcal{L} \left( x^{t+1}, y^{t+1} \right) - \mathcal{L} \left( x^t, y^t \right) \right] + \frac{1}{2 \eta_y} \left( \mathbb{E} \left[ \left\| y^t - y^\star \left( x^s \right) \right\|^2 \right] - \mathbb{E} \left[ \left\| y^{t+1} - y^\star \left( x^s \right) \right\|^2 \right] \right) \\
				& + L_1 \mathbb{E} \left\| y^t - y^\star \left( x^s \right) \right\| \left\| \nabla_y \psi^t \left( x^t, y^t \right) - \nabla_y \psi \left( x^t, y^t \right) \right\|_{F} \\
				& + \frac{\eta_y}{2} \mathbb{E} \left( 1 + \left\| \nabla_y \psi^t \left( x^t, y^t \right) \right\|_F^2 \right) \frac{\sigma^2}{M} \\
				& + \frac{\eta_y}{2} L_1^2 \mathbb{E} \left\| \nabla_y \psi^t \left( x^t, y^t \right) - \nabla_y \psi \left( x^t, y^t \right) \right\|_F^2.
			\end{aligned}
		\end{equation}
	
	For any  integers $B$ and $T$ $(0<B\leq T+1)$,
	\begin{small}
		\begin{equation*}
			\begin{aligned}
				\sum_{t=0}^{T} \Delta_{t}=\sum_{j=0}^{\lfloor(T+1)/B\rfloor-1}\sum_{t=j B}^{(j+1) B-1} \Delta_{t}+\sum_{t=\lfloor(T+1)/B\rfloor B}^{T}\Delta_{t}\,.
			\end{aligned}
		\end{equation*}
	\end{small}

	\noindent Choosing $s=\lfloor t/B\rfloor B$ on the right-hand side of inequality \eqref{eq:concave_Delta_upperbound}, we have by the boundedness of the constraint set $\mathcal{Y}$ that
	\begin{equation*}
		\begin{aligned}
			\sum_{j=0}^{\lfloor(T+1)/B\rfloor-1}\sum_{t=j B}^{(j+1) B-1} \Delta_{t}\leq&\eta_{x} L L_{1}\left(1+L_{0}+L_{1}\right) B^{2}\lfloor  (T+1)/B\rfloor\\
			&+\frac{1}{4}\eta_{x}L (2 B-1)
			\sum_{t=0}^{\lfloor (T+1)/B\rfloor B-1}\mathbb{E}\left\|\nabla \psi^{t}\left(x^{t},y^{t}\right)-\nabla \psi\left(x^{t},y^{t}\right)\right\|_{F}^{2}\\
			&+\mathbb{E}\left[\mathcal{L}\left(x^{\lfloor\frac{T+1}{B}\rfloor B},y^{\lfloor\frac{T+1}{B}\rfloor B} \right)-\mathcal{L}\left(x^{0},y^{0}\right) \right]
			+\frac{D^{2}}{2 \eta_{y}}\lfloor(T+1)/B \rfloor\\
			&+L_{1}D\sum_{t=0}^{\lfloor (T+1)/B\rfloor B-1}\left\|\nabla_{y}\psi^{t}(x^{t},y^{t})-\nabla_{y}\psi(x^{t},y^{t}) \right\|_{F}\\
			&+\frac{\eta_{y}}{2}\sum_{t=0}^{\lfloor (T+1)/B\rfloor B-1}\mathbb{E}\left(1+\left\|\nabla_{y}\psi^{t}(x^{t},y^{t})\right\|_F^{2} \right)\frac{\sigma^{2}}{M}\\
			&+\frac{\eta_{y}}{2}L_{1}^{2}\sum_{t=0}^{\lfloor (T+1)/B\rfloor B-1}\mathbb{E}\left\|\nabla_{y}\psi^{t}(x^{t},y^{t})-\nabla_{y}\psi(x^{t},y^{t}) \right\|_{F}^{2},
		\end{aligned}
	\end{equation*}
and
	\begin{equation*}
		\begin{aligned}
			\sum_{t=\lfloor(T+1)/B\rfloor B}^{T}\Delta_{t}\leq& \eta_{x} L L_{1}\left(1+L_{0}+L_{1}\right) B^{2}\\
			+&\frac{1}{4}\eta_{x}L(2 B-1)
			\sum_{t=\lfloor (T+1)/B\rfloor B}^{T}\mathbb{E}\left\|\nabla \psi^{t}\left(x^{t},y^{t}\right)-\nabla \psi\left(x^{t},y^{t}\right)\right\|_{F}^{2}\\
			+&\mathbb{E}\left[\mathcal{L}(x^{T+1},y^{T+1})-\mathcal{L}\left(x^{\lfloor\frac{T+1}{B}\rfloor B},y^{\lfloor\frac{T+1}{B}\rfloor B} \right) \right]+\frac{D^2}{2\eta_y}\\
			+&L_{1}D\sum_{t=\lfloor(T+1)/B\rfloor B}^{T}\left\|\nabla_{y}\psi^{t}(x^{t},y^{t})-\nabla_{y}\psi(x^{t},y^{t}) \right\|_{F}\\
			+&\frac{\eta_y}{2}\sum_{t=\lfloor(T+1)/B\rfloor B}^{T}\mathbb{E}\left(1+\left\|\nabla_{y}\psi^{t}(x^{t},y^{t})\right\|_F^{2} \right)\frac{\sigma^{2}}{M}\\
			+&\frac{\eta_y}{2}L_{1}^{2}\sum_{t=\lfloor(T+1)/B\rfloor B}^{T}\mathbb{E}\left\|\nabla_{y}\psi^{t}(x^{t},y^{t})-\nabla_{y}\psi(x^{t},y^{t}) \right\|_{F}^{2}.
		\end{aligned}
	\end{equation*}
	
	\noindent Summarize the above two inequalities, we obtain
	\begin{equation*}\label{eq:concave_summing}
		\begin{aligned}
			\sum_{t=0}^{T} \Delta_{t}\leq& \eta_{x} L L_{1}\left(1+L_{0}+L_{1}\right) B (T+1)+\eta_{x} L L_{1}\left(1+L_{0}+L_{1}\right) B^{2}\\
			&+\frac{1}{4}\eta_{x}L (2 B-1)
			\sum_{t=0}^{T}\mathbb{E}\left\|\nabla \psi^{t}\left(x^{t},y^{t}\right)-\nabla \psi\left(x^{t},y^{t}\right)\right\|_{F}^{2}\\
			&+\mathbb{E}\left[\mathcal{L}(x^{T+1},y^{T+1})-\mathcal{L}(x^{0},y^{0})\right]+\frac{D^{2}}{2 \eta_{y}}\left(\frac{T+1}{B}+1\right)\\
			&+L_{1}D\sum_{t=0}^{T}\left\|\nabla_{y}\psi^{t}(x^{t},y^{t})-\nabla_{y}\psi(x^{t},y^{t}) \right\|_{F}
			+\frac{\eta_{y}}{2}\sum_{t=0}^{T}\mathbb{E}\left(1+\left\|\nabla_{y}\psi^{t}(x^{t},y^{t})\right\|_F^{2} \right)\frac{\sigma^{2}}{M}\\
			&+\frac{\eta_{y}}{2}L_{1}^{2}\sum_{t=0}^{T}\mathbb{E}\left\|\nabla_{y}\psi^{t}(x^{t},y^{t})-\nabla_{y}\psi(x^{t},y^{t}) \right\|_{F}^{2}.
		\end{aligned}
	\end{equation*}
	For the fourth term on the right-hand side of the above inequality, we have
	by the definition of $\hat{\Delta}_{0}$ and $L$-Lipschitz continuity of $\mathcal{L}(\cdot,y)$ that
	\begin{equation*}
		\begin{aligned}
			\mathbb{E}\left[\mathcal{L}\left(x^{T+1}, y^{T+1}\right)-\mathcal{L}\left(x^{0}, y^{0}\right)\right]
			\leq& \hat{\Delta}_{0}+\frac{1}{4}L\eta_{x}\sum_{t=0}^{T}\left\|\nabla_{x} \psi^{t}\left(x^{t},y^{t}\right)-\nabla_{x} \psi\left(x^{t},y^{t}\right)\right\|_{F}^{2}\\
			&+LL_{1}\eta_{x}\left(1+L_{0}+L_{1}\right)(T+1).
		\end{aligned}
	\end{equation*}
	Subsequently,
	\begin{equation*}
		\begin{aligned}
			\frac{1}{T+1}\sum_{t=0}^{T} \Delta_{t}\leq& \eta_{x} L L_{1}\left(1+L_{0}+L_{1}\right) (B+1) +\frac{\hat{\Delta}_{0}+\eta_{x} L L_{1}\left(1+L_{0}+L_{1}\right) B^{2}}{T+1}\\
			&+\frac{D^{2}}{2 \eta_{y}B}+\frac{D^{2}}{2 \eta_{y}(T+1)}+\frac{\eta_{x}}{2}L B
			\frac{1}{T+1}\sum_{t=0}^{T}\mathbb{E}\left\|\nabla_{x} \psi^{t}\left(x^{t},y^{t}\right)-\nabla_{x} \psi\left(x^{t},y^{t}\right)\right\|_{F}^{2}
			\\
			&+L_{1}D\frac{1}{T+1}\sum_{t=0}^{T}\left\|\nabla_{y}\psi^{t}(x^{t},y^{t})-\nabla_{y}\psi(x^{t},y^{t}) \right\|_{F}\\
			&
			+\frac{\eta_{y}}{2}\frac{1}{T+1}\sum_{t=0}^{T}\mathbb{E}\left(1+\left\|\nabla_{y}\psi^{t}(x^{t},y^{t})\right\|_F^{2} \right)\frac{\sigma^{2}}{M}\\
			&+\frac{\eta_{y}}{2}L_{1}^{2}\frac{1}{T+1}\sum_{t=0}^{T}\mathbb{E}\left\|\nabla_{y}\psi^{t}(x^{t},y^{t})-\nabla_{y}\psi(x^{t},y^{t}) \right\|_{F}^{2}.
		\end{aligned}
	\end{equation*}
	\noindent The proof is complete.
\end{proof}

\end{appendices}
\end{document}